\newtheorem{theorem}{\sc Theorem}[section]
\newtheorem{proposition}[theorem]{\sc Proposition}
\newtheorem{notation}[theorem]{\sc Notation}
\newtheorem{lemma}[theorem]{\sc Lemma}
\newtheorem{corollary}[theorem]{\sc Corollary}
\theoremstyle{definition}
\newtheorem{definition}[theorem]{\sc Definition}
\newtheorem{remark}[theorem]{\sc Remark}
\newcommand{\chara}{\textup{char }}
\newcommand{\sgn}{\textup{sgn}}
\begin{document}


\title[Separable cowreaths in higher dimension]{Separable cowreaths in higher dimension}

\author{Fabio Renda}
\date{\today}

\address{\parbox[b]{0.97\linewidth}{University of Ferrara, Department of Mathematics and Computer Science,\\ Via Machiavelli 30, 44121 Ferrara, Italy}}
\email{fabio.renda@unife.it}

\subjclass{Primary 16T05; Secondary 15A66, 18M05}

\keywords{Clifford Algebras, Cowreath, Separable Functors, Coseparable coalgebra}

\begin{abstract}
In this paper we present an infinite family of (h-)separable cowreaths with increasing dimension. Menini and Torrecillas proved in \cite{mt2} that for $A=Cl(\alpha,\beta, \gamma)$, a four-dimensional Clifford algebra, and $H=H_4$, Sweedler's Hopf algebra, the cowreath $(A \otimes H^{op},H, \psi)$ is always (h-)separable. We show how to produce similar examples in higher dimension by considering a $2^{n+1}$-dimensional Clifford algebra $A=Cl(\alpha,\beta_i,\gamma_i,\lambda_{ij})$ and $H=E(n)$, a suitable pointed Hopf algebra that generalizes $H_4$. We adopt the approach pursued in \cite{mt}, requiring that the separability morphism be of a simplified form, which in turn forces the defining scalars $\alpha,\beta_i,\gamma_i,\lambda_{ij}$ to satisfy further conditions. 
\end{abstract}

\maketitle

\tableofcontents

\section{Introduction}

This paper is closely related to \cite{mt,mt2} in that it provides further results on the separability of cowreaths constructed on two-sided Hopf modules (see also \cite[Sec. 3]{bct2}). The concept of separable algebra is a generalization of that of separable field extension and as such was first defined in the context of vector spaces (cf. \cite{AG}). It is usually said that a $\Bbbk$-algebra $A$ is separable if the multiplication map $m_A: A \otimes A \rightarrow A$ admits a section $\gamma: A \rightarrow A \otimes A$ which is also a morphism of $A$-$A$-bimodules. This is easily generalized to the case of algebras in a monoidal category. Moreover, when adopting a categorical approach, separability of an object can be interpreted also as a property of a certain forgetful functor. In order to do this, one needs the notion of separable functor, introduced in \cite{nbo}. The importance of the concept of separability and its widespread applications are evidenced in \cite{Fo}. The dual notion of coseparable coalgebra was first presented by Larson in \cite{Lar} in terms of module injectivity, and was later shown to represent a suitable dualization of all the equivalent characterizations of separability (see \cite[Prop.~7.3]{bct}, \cite[Thm.~2.5]{mt}). 

Ardizzoni and Menini introduced in \cite{am} a stronger version of the notion of separability for functors, called heavy separability (or, for short, h-separability). This led to the discovery of a Rafael-type theorem for h-separable functors \cite[Theorem 2.1]{am} and to the definition of h-coseparable coalgebra \cite[Def. 2.2]{mt}. Unfortunately, it was proved that the only h-coseparable coalgebras $C$ over a field $\Bbbk$ are $C=\Bbbk$ and $C=\lbrace 0 \rbrace$. Non-trivial examples of h-coseparable coalgebras were later exhibited in \cite{mt} as objects of the category $\mathcal{T}_{A \otimes H^{op}}^{\#}$ of right transfer morphism through $A \otimes H^{op}$, where $A=Cl(\alpha,\beta, \gamma)$ is a four-dimensional Clifford algebra and $H=H_4$ is Sweedler's Hopf algebra. These objects are realized as instances of particular entwined structures called cowreaths, which were introduced by Brzezinski in \cite{br} in order to develop Galois theory for coalgebras. In detail, given an algebra $A$ in a monoidal category $\mathcal{M}$, one can construct a new category
$\mathcal{T}_A^{\#}$, where objects are pairs $(X, \psi)$, where $X$ is an object in $\mathcal{M}$ and $\psi: X \otimes A \rightarrow A \otimes X$ is a morphism in $\mathcal{M}$ compatible with the algebra structure of $A$, and a triple $(A, X, \psi)$ is called a cowreath in $\mathcal{M}$ if $(X, \psi)$ is a coalgebra in $\mathcal{T}_A^{\#}$. The theory of separable cowreaths was developed in \cite{mt}.

When $H$ is a Hopf algebra over a field $\Bbbk$ and $A$ is a right $H$-comodule algebra, we find that $A \otimes H^{op}$ is naturally a right $H \otimes H^{op}$-comodule algebra and $H$ is a $H \otimes H^{op}$-module coalgebra and we can construct a cowreath $(A \otimes H^{op}, H, \psi)$ in the category of $\Bbbk$-vector spaces. The examples of h-separable cowreaths presented in \cite{mt} are built in view of the fact that a four-dimensional Clifford algebra $Cl(\alpha,\beta, \gamma)$ admits a canonical comodule-algebra structure over Sweedler's Hopf algebra $H_4$. Indeed, it was shown in \cite{PVO2} that, more in general, a $2^{n+1}$-dimensional Clifford algebra $A=Cl(\alpha, \beta_i, \gamma_i, \lambda_{ij})$ admits a canonical $E(n)$-comodule algebra structure $\rho: A \rightarrow A \otimes E(n)$ that makes it a cleft extension of the groud field $\Bbbk$. The name $E(n)$ is used to denote a family of pointed Hopf algebras that generalizes Sweedler's Hopf algebra ($H_4=E(1)$) and that were introduced in \cite{BDG} and studied in \cite{CC, CD, PVO, PVO2}. Every $E(n)$-comodule algebra structure $\rho:A \rightarrow A \otimes E(n)$ can be described in terms of a tuple of maps $(\varphi, d_1, \ldots, d_n)$ satisfying suitable properties, as it was shown in \cite{FR2}. Then the separability of a cowreath $(A \otimes E(n)^{op}, E(n), \psi)$ can also be characterized by a list of equalities involving these maps, if we further assume the Casimir element realizing separability to be of a simplified form (see Theorem~\ref{theo1}). This allows us to exhibit a whole family of (h-)separable cowreaths in higher dimension that generalizes the example described in \cite{mt}.

\medskip

The article is organized as follows. 
In Section \ref{preliminaries} we recall some basics on cowreaths in monoidal categories and their (h-)separability, we introduce the definition of the Hopf algebras $E(n)$ and we describe its comodule-algebras, among which Clifford algebras $Cl(\alpha, \beta_i, \gamma_i, \lambda_{ij})$ play a special role. Section~\ref{technicalresults} contains a series of technical lemmas that will help us with calculations in the sequel. Some of these lemmas deal with partitions of a set, some with a function controlling the sign of a product in Clifford algebras and some other illustrate useful properties of a family of distinguished skew-derivations. In Section~\ref{rtseparability} we show that (h-)separability of the cowreath $(A \otimes E(n)^{op}, E(n), \psi)$, with $A$ a finite-dimensional $E(n)$-comodule algebra, can be characterized by a family of equalities involving a tuple of maps related to the comodule algebra structure of $A$ \--- provided that the Casimir element $B$ that realizes separability is of a simplified form. Finally, in Section~\ref{Cliffordcase} we prove that when  $A=Cl(\alpha,\beta_i,\gamma_i, \lambda_{ij})$ is a Clifford algebra endowed with its canonical $E(n)$-coaction, the aforementioned list of equalities admits solutions for every $n$.

\medskip

\noindent\textit{Notations and conventions}. All vector spaces are understood to be over a fixed field $\Bbbk$ and by linear maps we mean $\Bbbk$-linear maps. $\Bbbk^{\times}$ is used to indicate the multiplicative group of the field $\Bbbk$. The unadorned tensor product $\otimes$ stands for $\otimes_{\Bbbk}$. We will write a tensor symbol before the exponent when denoting iterated tensor products: $X^{\otimes 2}=X \otimes X$, $X^{\otimes 3}=X \otimes X \otimes X$ and so on. All linear maps whose domain is a tensor product will usually be defined on generators and understood to be extended by linearity. 
Algebras over $\Bbbk$ will be associative and unital and coalgebras over $\Bbbk$ will be coassociative and counital. For an algebra, the multiplication and the unit are denoted by $m$ and $u$, respectively, while for a coalgebra, the comultiplication and the counit are denoted by $\Delta$ and $\varepsilon$, respectively. $S:H \rightarrow H$ denotes the antipode map of $H$. We use the classical Sweedler’s notation 
and we shall write $\Delta(h)= h_{1}\otimes h_{2}$ for any $h\in H$, where we omit the summation symbol. $H^{op}$ denotes the opposite Hopf algebra, i.e. the Hopf algebra $H$ where the multiplication is replaced by $m^{op}(h \otimes h'):=h'h$ for every $h,h' \in H$. For a (right) $H$-comodule $M$ we write its structure map as $\rho: M\to M\otimes H$, $\rho(m)=m_0\otimes m_1$ for every $m \in M$. We use $\sqcup$ to denote the disjoint union of two sets and we write $Q^c$ to indicate the complement of the set $Q$ in $\lbrace 1, \ldots, n \rbrace$. We denote by $\delta_{ij}$ the usual Kronecker symbol. We sometimes use it in a broader sense, writing $\delta_{p}$ \--- where $p$ is any proposition depending on a set of variables \--- with the convention that $\delta_{p}$ represent $1$ if $p$ is true and $0$ otherwise (e.g. $\delta_{i \in Q}$ is $1$ if $i \in Q$ and $0$ otherwise). $\lfloor m \rfloor$ denotes the biggest integer less than or equal to $m$. 

\section{Preliminaries}\label{preliminaries}
Throughout the article we will mostly work in the category of vector spaces over a fixed field $\Bbbk$, but since the natural setting for cowreaths is that of monoidal categories, we will adopt a more general approach to present the first definitions. If needed, a solid reference for basics of category theory is \cite{bo1, bo2}. After a second part where we introduce the notion of (h-)separable cowreaths, we restrict to the category $\mathrm{Vec}_{\Bbbk}$ of vector spaces over $\Bbbk$, presenting some results on separability in this setting. Finally we recall the definitions of our two main ingredients: the Hopf algebras $E(n)$ and the Clifford algebras $Cl(\alpha,\beta_i, \gamma_i, \lambda_{ij})$. Most of the contents of this introductory section are taken from \cite{bct, mt,FR2}. Proofs of results are omitted, the reader interested in details is referred to these papers.

\subsection{Cowreaths}\label{cowreathsec}
Wreaths and cowreaths are generalized entwined structures that can be introduced in the context of $2$-categories. The definition of wreath was given in \cite[p.256]{ls}, while the dual notion of cowreath in \cite[p. 1056]{bc}. 
\begin{definition}{\cite{ls,bc}}
Let $\mathcal{M}$ be a $2$-category. Then a (co)wreath in $\mathcal{M}$ is a (co)monad in the Eilenberg-Moore category $\textrm{EM}(\mathcal{M})$ of $\mathcal{M}$. 
\end{definition}
In \cite{bc} the authors show that when $\mathcal{M}$ is a monoidal category, a cowreath in $\mathcal{M}$ is equivalently defined by the choice of an algebra $A$ in $\mathcal{M}$, an object $X$ in $\mathcal{M}$, and an entwining map $\psi: X \otimes A \rightarrow A \otimes X$ satisfying appropriate relations. Among these relations is the fact that $(X, \psi)$ is a coalgebra in a suitable monoidal category associated to the algebra $A$. This ultimately leads to the notion of corepresentation of a cowreath, i.e. of entwined module over a cowreath \cite[Def.~3.2]{bct}.
Further details about (co)wreaths in a $2$-category, their (co)representations and motivations for their investigation can be found in \cite{bc, bct, bct2}.
Given that cowreaths can be regarded as coalgebras in a suitable monoidal category, the notions of separable and Frobenius cowreath can also be introduced and examples of such structures can be presented  (see \cite{bct2,mt}).

\begin{definition}\cite[Subsec.~3.1]{bct}\label{transf}
Let $\mathcal{M}$ be a (strict) monoidal category and let $(A, m, u)$ be an algebra in $\mathcal{M}$. A (right) transfer morphism through $A$ is a pair $(X, \psi)$ with $X \in \mathcal{M}$ and $\psi : X \otimes A \rightarrow A \otimes X$ in $\mathcal{M}$
such that
\begin{align*}
\psi (\textrm{Id}_X \otimes m) &= (m \otimes \textrm{Id}_X) (\textrm{Id}_A \otimes \psi) (\psi \otimes \textrm{Id}_A),\\
\psi (\textrm{Id}_X \otimes u) &= u \otimes \textrm{Id}_X.
\end{align*}
The category of all right transfer morphism through $A$ will be denoted by $\mathcal{T}_A^{\#}$ and a morphism $f:X \rightarrow Y$ for this category is a morphism $f: X \rightarrow A \otimes Y$ in $\mathcal{M}$ such that
\begin{equation*}\label{morphT}
(m \otimes \textrm{Id}_Y) (\textrm{Id}_A \otimes f) \psi_X =(m \otimes \textrm{Id}_Y) (\textrm{Id}_A \otimes \psi_Y) (f \otimes \textrm{Id}_A).
\end{equation*}
The composition of two morphisms $f : X \rightarrow Y$ and $g : Y \rightarrow Z$ in $\mathcal{T}_A^{\#}$ is
\begin{equation*}\label{compT}
g \odot f = (m \otimes \textrm{Id}_Z) (\textrm{Id}_A \otimes g) f
\end{equation*}
and we have
\[\mathrm{Id}_{(X,\psi)} = u \otimes \textrm{Id}_X.\]
The tensor product of $(X, \psi_X)$ and $(Y, \psi_Y )$ is
\begin{equation*}\label{tensorpsi}
X \circledast Y = (X \otimes Y, \ \psi_X \odot \psi_Y = (\psi_X \otimes \textrm{Id}_Y )(\textrm{Id}_X \otimes \psi_Y )).
\end{equation*} 
The tensor product of morphisms $f : X \rightarrow X'$ and $g : Y \rightarrow Y'$ is
\begin{equation*}\label{tensorfg}
f \circledast g = (m \otimes \textrm{Id}_{X'} \otimes \textrm{Id}_{Y'}) (\textrm{Id}_A \otimes \psi_X \otimes Y') (f \otimes g) .
\end{equation*}
The unit object of $\mathcal{T}_A^{\#}$ is
\[(\underline{1}, r_A^{-1} \circ l_A : \underline{1} \otimes A \rightarrow A \otimes \underline{1}).\]
\end{definition}

\begin{definition}\cite[Subsec.~3.2]{bct}\label{defcowreath}
A cowreath in $\mathcal{M}$ is a triple $(A, X, \psi)$ where $A$ is an algebra in $\mathcal{M}$ and $(X, \psi)$ is a coalgebra in $\mathcal{T}_A^{\#}$. 
This means that $(X, \psi) \in \mathcal{T}_A^{\#}$ and there are morphisms
$\delta : X \rightarrow A \otimes X \otimes X$ and $\varepsilon : X \rightarrow A$
such that 
\begin{equation*}\label{deltamor}
(m \otimes \textrm{Id}_{X^{\otimes 2}})(\textrm{Id}_A \otimes \psi \otimes \textrm{Id}_X)(\textrm{Id}_A \otimes \textrm{Id}_X \otimes \psi)(\delta \otimes \textrm{Id}_A)= (m \otimes \textrm{Id}_{X^{\otimes 2}})(\textrm{Id}_A \otimes \delta)\psi \\
\end{equation*}
($\delta$ is a morphism in $\mathcal{T}_A^{\#}$),
\begin{equation*}\label{cowcoass}
(m \otimes \textrm{Id}_{X^{\otimes 3}})(\textrm{Id}_A \otimes \delta \otimes \textrm{Id}_X)\delta = (m \otimes \textrm{Id}_{X^{\otimes 3}})(\textrm{Id}_A \otimes \psi \otimes \textrm{Id}_{X^{\otimes 2}})(\textrm{Id}_A \otimes \textrm{Id}_X \otimes \delta)\delta
\end{equation*}
(coassociativity),
\begin{equation*}\label{epsmor} 
m (\textrm{Id}_A \otimes \varepsilon) \psi = m (\varepsilon \otimes \textrm{Id}_A) 
\end{equation*}
($\varepsilon$ is a morphism in $\mathcal{T}_A^{\#}$),
\begin{equation*}\label{leftcou}
(m \otimes \textrm{Id}_X) (\textrm{Id}_A \otimes \varepsilon \otimes \textrm{Id}_X) \delta = u \otimes \textrm{Id}_X 
\end{equation*}
(left counit property),
\begin{equation*}\label{rightcou}
(m \otimes \textrm{Id}_X)(\textrm{Id}_A \otimes \psi)(\textrm{Id}_A \otimes \textrm{Id}_X \otimes \varepsilon) \delta = u \otimes \textrm{Id}_X 
\end{equation*}
 (right counit property).
\end{definition}

\subsection{Separable structures}

The notion of coseparable coalgebra was introduced by Larson in \cite{Lar} in terms of module injectivity. Bulacu, Caenepeel and Torrecillas proved an equivalent characterization of coseparability \cite[Prop.~7.3]{bct} that we choose as definition.
\begin{definition}\cite{bct}\label{eqcosep}
Let $(\mathcal{M}, \otimes, \underline{1})$ be a (strict) monoidal category. A coalgebra $(C, \Delta, \varepsilon)$ in $\mathcal{M}$ is called coseparable if there exists a morphism $B: C \otimes C \rightarrow \underline{1}$ in $\mathcal{M}$, such that
\begin{equation}\label{cosep}
(\textrm{Id}_C \otimes B)(\Delta \otimes \textrm{Id}_C)=(B \otimes \textrm{Id}_C)(\textrm{Id}_C \otimes \Delta) \quad \textrm{and} \quad B \Delta =\varepsilon.
\end{equation}
A morphism $B: C \otimes C \rightarrow \underline{1}$ satisfying the first of the two conditions \eqref{cosep} is called a Casimir morphism for $C$. A Casimir morphism satisfying also $B \circ \Delta= \varepsilon$ is called a normalized Casimir morphism. 
\end{definition}
Prompted by an example related to the tensor algebra, Ardizzoni and Menini introduced in \cite{am} a stronger version of the notion of separability for functors, called heavy separability. It was later shown in \cite{mt} that the correct corresponding notion for coalgebras is the following.
\begin{definition}{\cite[Def.~$2.2$]{mt}}\label{heavycosep}
Let $(C, \Delta, \varepsilon)$ be a coalgebra in a monoidal category $\mathcal{M}$. We will say $C$ is heavily coseparable (h-coseparable for short) if $C$ is coseparable, endowed with a normalized Casimir morphism $B$ that also satisfies 
\begin{equation*}\label{hcosep}
(B \otimes B)(\textrm{Id}_C \otimes \Delta \otimes \textrm{Id}_C) = B (\textrm{Id}_C \otimes \varepsilon \otimes \textrm{Id}_C) \quad  \textrm{(h-coseparability condition)}.
\end{equation*}
\end{definition}
Cowreaths are coalgebras in the monoidal category $\mathcal{T}_A^{\#}$, therefore one can accordingly define (h-)separable cowreaths.
\begin{definition}\cite[p. 240]{bct2}\label{sepcow}
A cowreath $\left(A,X,\psi \right)$ is called separable if $\left( X,\psi \right) $ is a coseparable coalgebra in the monoidal category $\mathcal{T}_{A}^{\#}$.
Similarly we will say that a cowreath $\left( A,X,\psi \right) $ is heavily separable (h-separable for short) if $\left( X,\psi \right) $ is a heavily coseparable coalgebra in the monoidal category $\mathcal{T}
_{A}^{\#}$.
\end{definition}
The theory of separable cowreaths in monoidal categories was developed in \cite{mt}.

\subsection{Separable cowreaths on two-sided Hopf modules}

From now on we will consider $\mathcal{M}=\mathrm{Vec}_{\Bbbk}$ the monoidal category of vector spaces over a fixed field $\Bbbk$. Remember that a bialgebra in $\mathrm{Vec}_{\Bbbk}$ is a $5$-tuple $(B, m, u, \Delta, \varepsilon)$, where $(B, m, u)$ is an algebra in $\mathrm{Vec}_{\Bbbk}$, $(B, \Delta, \varepsilon)$ is a coalgebra in $\mathrm{Vec}_{\Bbbk}$ and such that the following conditions hold
\[
\begin{array}{r r}
(m \otimes m)(\textrm{Id}_B \otimes \tau_{B,B} \otimes \textrm{Id}_B)(\Delta \otimes \Delta) =\Delta m, &  \Delta  u = u \otimes u,\\
\varepsilon  m = m (\varepsilon \otimes \varepsilon), & \varepsilon u = \mathrm{Id}_{\Bbbk},
\end{array}
\]
where $\tau_{B,B}: B \otimes B \rightarrow B \otimes B$ denotes the usual flip.
A Hopf algebra in $\mathrm{Vec}_{\Bbbk}$ is a $6$-tuple $(H, m, u, \Delta, \varepsilon, S)$, where $(H, m, u, \Delta, \varepsilon)$ is a bialgebra in $\mathrm{Vec}_{\Bbbk}$ and $S:H \rightarrow H$ is an inverse for $\mathrm{Id}_H$ in the convolution algebra $\mathrm{Hom}(H,H)$. The map $S$ is unique for every Hopf algebra and it is called the antipode of $H$.

Let $(A, \cdot, 1_A)$ be an algebra in $\mathrm{Vec}_{\Bbbk}$ and let $(H, m, u, \Delta, \varepsilon, S)$ be a Hopf algebra in $\mathrm{Vec}_{\Bbbk}$. Given $\rho_A: A \rightarrow A \otimes H$, the pair $(A, \rho_A)$ is called a (right) $H$-comodule algebra if the following conditions are satisfied.
\begin{align*}
(A, \rho_A) &  \textrm{ is a (right)} \ H\textrm{-comodule,}\\
\rho_A (ab) &= a_0b_0 \otimes a_1b_1,\\
\rho_A (1_A)&=1_A \otimes 1_H \ \ (\textrm{where } 1_H=u(1_{\Bbbk})).
\end{align*}
for all $a,b \in A$.
The map $\rho_A$ is also called a (right) $H$-coaction on $A$.

The following proposition shows how to construct a cowreath in $\mathrm{Vec}_{\Bbbk}$ from a Hopf algebra $H$ and an $H$-comodule algebra $A$ (see \cite{mt}). 
\begin{proposition}\cite[Sec. 5]{mt}\label{propXA}
Let $\left( H,m, u,\Delta, \varepsilon, S\right) $ be a Hopf algebra and $\left(A,\rho \right)$ be a right $H$-comodule algebra.
Let $\psi: H \otimes A \otimes H^{op} \rightarrow A \otimes H^{op} \otimes H$ be defined by
\begin{equation*}\label{psiHAHop}
\psi(h \otimes a \otimes l)=a_0 \otimes l_1 \otimes l_2 h a_1
\end{equation*}
for every $h \in H$ and every $a \otimes l \in A \otimes H^{op}$.
Then $(H, \psi) \in \mathcal{T}_{A\otimes H^{op}}^{\#}$ and $(H, \psi)$ is a coalgebra in $\mathcal{T}_{A \otimes H^{op}}^{\#}$, with comultiplication $\delta_H:  H \rightarrow A \otimes H^{op} \otimes H \otimes H$ and counit $\epsilon_H: H \rightarrow A \otimes H^{op}$ given by
\begin{eqnarray*}
\delta_H(h)=1_A \otimes 1_H\otimes h_1 \otimes h_2, \label{deltaH}\\
\epsilon_H(h)=\varepsilon_H(h)1_A \otimes 1_H, \label{epsH}
\end{eqnarray*}
for every $h \in H$.
\end{proposition}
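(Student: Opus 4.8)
The statement packages two claims: first that $(H,\psi)\in\mathcal{T}^{\#}_{A\otimes H^{\op}}$, i.e.\ that $\psi$ satisfies the two axioms of Definition~\ref{transf} for the algebra $R:=A\otimes H^{\op}$; and second that $(H,\psi)$ together with $\delta_H$ and $\epsilon_H$ is a coalgebra in $\mathcal{T}^{\#}_{R}$, i.e.\ the five conditions of Definition~\ref{defcowreath}. Each of these is an equality of $\Bbbk$-linear maps defined on generators, so the plan is simply to evaluate both sides on a generator $h\otimes(a\otimes l)\otimes\cdots$ and match, using only the comodule-algebra axioms for $(A,\rho)$ (namely $\rho(ab)=a_0b_0\otimes a_1b_1$, $\rho(1_A)=1_A\otimes 1_H$, and the comodule coassociativity $(\rho\otimes\mathrm{Id})\rho=(\mathrm{Id}\otimes\Delta)\rho$) together with the (co)associativity, (co)unit and bialgebra axioms of $H$.

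Recall first that the product of $R$ is $(a\otimes l)(a'\otimes l')=aa'\otimes l'l$ and its unit is $1_A\otimes 1_H$. For the multiplicativity axiom of Definition~\ref{transf} I would evaluate the left-hand side $\psi(\mathrm{Id}_H\otimes m_R)$ on $h\otimes(a\otimes l)\otimes(a'\otimes l')$: since $\rho$ is an algebra map, $(aa')_0\otimes(aa')_1=a_0a'_0\otimes a_1a'_1$, and since $\Delta$ is multiplicative, $\Delta(l'l)=l'_1l_1\otimes l'_2l_2$, so the left-hand side becomes $a_0a'_0\otimes l'_1l_1\otimes l'_2l_2\,h\,a_1a'_1$. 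Expanding the right-hand side $(m_R\otimes\mathrm{Id}_H)(\mathrm{Id}_R\otimes\psi)(\psi\otimes\mathrm{Id}_R)$ by two successive applications of $\psi$ and one multiplication in $R$ produces the same expression. The unit axiom is immediate: from $\rho(1_A)=1_A\otimes 1_H$ and $\Delta(1_H)=1_H\otimes 1_H$ one gets $\psi(h\otimes 1_A\otimes 1_H)=1_A\otimes 1_H\otimes 1_H\,h\,1_H=(1_A\otimes 1_H)\otimes h$, which is exactly $u_R\otimes\mathrm{Id}_H$.

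The coalgebra conditions of Definition~\ref{defcowreath} are then considerably simplified by the fact that both $\delta_H$ and $\epsilon_H$ place the unit $1_A\otimes 1_H$ in the $R$-slot, so that every outer multiplication $m_R\otimes\mathrm{Id}$ collapses to an identity, and by the auxiliary identity $\psi(h\otimes 1_A\otimes 1_H)=(1_A\otimes 1_H)\otimes h$ just noted. With these, the $\delta_H$-morphism condition and coassociativity both reduce to the coassociativity of $\Delta$ (used together with the comodule coassociativity of $\rho$ to expand $\Delta(l_2ha_1)$), while the $\epsilon_H$-morphism condition and the two counit properties reduce to the counit axiom $\varepsilon(h_1)h_2=h=h_1\varepsilon(h_2)$ of $H$ and the comodule counit $a_0\varepsilon(a_1)=a$. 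In each case I would write out the Sweedler expansion on a generator and cancel.

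The only genuinely delicate point is the bookkeeping of Sweedler indices in the multiplicativity axiom: because $H^{\op}$ carries the opposite product, the $l$-factors get reversed (one has $l'l$, hence $\Delta(l'l)=l'_1l_1\otimes l'_2l_2$), whereas $\rho$ being an ordinary algebra map keeps the $a$-factors in their original order (giving $a_1a'_1$); threading these two opposite conventions through the double application of $\psi$ is where care is needed, though no idea beyond the comodule-algebra axioms is required. I note in passing that the statement can alternatively be obtained conceptually, without recomputing the individual axioms, by recognizing $(A\otimes H^{\op},H,\psi)$ as the cowreath attached to the right $H\otimes H^{\op}$-comodule algebra $A\otimes H^{\op}$ and the right $H\otimes H^{\op}$-module coalgebra $H$ with action $h\cdot(x\otimes y)=yhx$; specializing that general construction reproduces exactly $\psi$, $\delta_H$ and $\epsilon_H$.
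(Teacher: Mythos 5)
Your verification is correct. Note that the paper itself gives no proof of this proposition: it is quoted from \cite[Sec.~5]{mt}, and the surrounding text explicitly states that proofs of the results in the preliminary section are omitted. Your direct check of the two transfer-morphism axioms and the five coalgebra axioms of Definitions~\ref{transf} and~\ref{defcowreath} is the standard argument and all the Sweedler bookkeeping you describe (in particular the reversal $\Delta(l'l)=l'_1l_1\otimes l'_2l_2$ in the $H^{\op}$ slot versus the unreversed $a_1a'_1$ coming from $\rho$ being an algebra map) works out as you say; the conceptual alternative you mention at the end, viewing $A\otimes H^{\op}$ as a right $H\otimes H^{\op}$-comodule algebra and $H$ as an $H\otimes H^{\op}$-module coalgebra, is precisely the construction the paper's introduction attributes to \cite{mt}, so either route is consistent with the source.
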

Now, by explicitly writing the conditions in Definition~\ref{sepcow}, one can determine when such a cowreath is (h-)separable. 
\begin{theorem}\cite[Thm.~5.1]{mt}
\label{Theorem:Hbim}A cowreath $(A\otimes H^{op},H,\psi )$ is separable via
a Casimir element
\begin{equation*}
B:H\otimes H\rightarrow A\otimes H^{op}
\end{equation*}
given by
\begin{equation*}
h\otimes h^{\prime }\mapsto B^{A}\left( h\otimes h^{\prime }\right)
\otimes B^{H}\left( h\otimes h^{\prime }\right)
\end{equation*}%
if, and only if, $B$ satisfies the following conditions
\begin{align}
&a_{0}B^{A}(g_{2}ha_{1}\otimes g_{3}h^{\prime }a_{2})\otimes
B^{H}(g_{2}ha_{1}\otimes g_{3}h^{\prime }a_{2})g_{1}  \label{B1-4.23} \\
&=B^{A}(h\otimes h^{\prime })a\otimes gB^{H}(h\otimes h^{\prime }),  \notag
\end{align}
\begin{align}
&B^{A}(h_{2}\otimes h^{\prime })_{0}\otimes B^{H}(h_{2}\otimes h^{\prime
})_{1}\otimes B^{H}(h_{2}\otimes h^{\prime })_{2}h_{1}B^{A}(h_{2}\otimes
h^{\prime })_{1}  \label{B2-4.24} \\
&=B^{A}(h\otimes h_{1}^{\prime })\otimes B^{H}(h\otimes h_{1}^{\prime
})\otimes h_{2}^{\prime },  \notag
\end{align}%
\begin{equation}
B^{A}(h_{1}\otimes h_{2})\otimes B^{H}(h_{1}\otimes h_{2})=\varepsilon
(h)1_{A}\otimes 1_{H}.  \label{B3-4.25}
\end{equation}%
Moreover it is h-separable if and only if $B$ satisfies the further
condition%
\begin{align}
&B^{A}\left( h\otimes h_{1}^{\prime }\right) B^{A}\left( h_{2}^{\prime
}\otimes h^{\prime \prime }\right) \otimes B^{H} \left( h_{2}^{\prime }\otimes h^{\prime \prime}\right) B^{H}\left( h\otimes h_{1}^{\prime }\right)  \label{B4-4.26}\\
&=\varepsilon_H \left( h^{\prime }\right) B^{A}\left( h\otimes h^{\prime \prime }\right) \otimes B^{H}\left( h\otimes h^{\prime \prime}\right). \notag
\end{align}
\end{theorem}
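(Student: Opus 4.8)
The plan is to unwind Definition~\ref{sepcow} by translating the abstract coseparability conditions \eqref{cosep} (and the h-coseparability condition) for the coalgebra $(H,\psi)$ living in $\mathcal{T}_{A\otimes H^{op}}^{\#}$ into concrete equalities in $\mathrm{Vec}_{\Bbbk}$. Since a Casimir morphism in $\mathcal{T}_{A\otimes H^{op}}^{\#}$ is by definition a morphism $B\colon H\circledast H\to\underline{1}$ \emph{in that category}, it is really a linear map $B\colon H\otimes H\to A\otimes H^{op}$; writing $B=B^{A}\otimes B^{H}$ as in the statement just records the two tensor legs. First I would recall from Proposition~\ref{propXA} the explicit forms of $\psi$, $\delta_H$ and $\epsilon_H$, and I would spell out what ``morphism in $\mathcal{T}_{A\otimes H^{op}}^{\#}$'', composition $\odot$, and the tensor product $\circledast$ mean when specialized to this cowreath, using the formulas collected in Definition~\ref{transf}.

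Next I would rewrite each of the two conditions in \eqref{cosep} separately. The first Casimir condition $(\textrm{Id}_C\otimes B)(\Delta\otimes\textrm{Id}_C)=(B\otimes\textrm{Id}_C)(\textrm{Id}_C\otimes\Delta)$ has to be read inside $\mathcal{T}_{A\otimes H^{op}}^{\#}$, so both sides are composites of morphisms in that category and each tensor/composition sign hides an insertion of $\psi$ and a multiplication $m$ in $A\otimes H^{op}$. Substituting $\delta_H(h)=1_A\otimes 1_H\otimes h_1\otimes h_2$ and the entwining $\psi(h\otimes a\otimes l)=a_0\otimes l_1\otimes l_2 h a_1$, and then reading off the $A$-leg and the $H$-leg, is exactly what should produce the two displayed identities \eqref{B1-4.23} and \eqref{B2-4.24}; the counit normalization $B\Delta=\varepsilon$, using $\epsilon_H(h)=\varepsilon_H(h)1_A\otimes 1_H$ and $\delta_H$, collapses to \eqref{B3-4.25}. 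For the ``moreover'' part, the h-coseparability condition $(B\otimes B)(\textrm{Id}_C\otimes\Delta\otimes\textrm{Id}_C)=B(\textrm{Id}_C\otimes\varepsilon\otimes\textrm{Id}_C)$ is expanded the same way: the left-hand tensor product $B\otimes B$ is taken in $\mathcal{T}_{A\otimes H^{op}}^{\#}$, hence involves $\psi$ and the multiplication of $A\otimes H^{op}$, and after inserting $\delta_H$ and using the comodule-algebra and antipode relations it should reduce to \eqref{B4-4.26}, where the product on the $H$-leg appears in reversed order precisely because $H^{op}$ carries the opposite multiplication.

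The main obstacle, and the place where care is essential, is the bookkeeping of the two monoidal structures stacked on top of one another: every $\otimes$ and every composition appearing in \eqref{cosep} is interpreted in $\mathcal{T}_{A\otimes H^{op}}^{\#}$, not in $\mathrm{Vec}_{\Bbbk}$, so each one silently contributes a copy of $\psi$ together with a multiplication $m$ in $A\otimes H^{op}=A\otimes H^{op}$. Getting the indices right means tracking how the $H^{op}$-factor multiplies (so that $l l'$ becomes $l' l$) and how the coaction $\rho_A(a)=a_0\otimes a_1$ threads through $\psi$ in iterated form; the repeated Sweedler indices $g_1,g_2,g_3$ and $h_1,h_2$ in the final identities are the residue of these nested entwinings. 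I would therefore proceed by first proving the Casimir equation and its two legs, then the normalization, and finally the heavy condition, checking at each stage that the left counit and right counit properties of $\delta_H,\epsilon_H$ recorded in Definition~\ref{defcowreath} are compatible with the reductions. Once the abstract identities are faithfully transcribed, verifying the equivalence in both directions is routine, since each step is reversible; the genuine work is entirely in the correct categorical expansion.
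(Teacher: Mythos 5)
This theorem is quoted from \cite[Thm.~5.1]{mt} and the paper gives no proof of it, so there is nothing internal to compare against; your plan has to be judged against the derivation in \cite{mt}, which is indeed exactly the kind of direct unwinding of Definition~\ref{sepcow} via the explicit $\psi$, $\delta_H$, $\epsilon_H$ of Proposition~\ref{propXA} and the structure maps of Definition~\ref{transf} that you describe. In that sense your strategy is the right one, and your remarks about each $\circledast$ and $\odot$ silently inserting a $\psi$ and a multiplication in $A\otimes H^{\op}$, and about the reversed product on the $H$-leg coming from $H^{\op}$, are exactly the points where the bookkeeping matters.

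There is, however, one concrete mis-step. You claim that the single Casimir identity $(\textrm{Id}_C\otimes B)(\Delta\otimes\textrm{Id}_C)=(B\otimes\textrm{Id}_C)(\textrm{Id}_C\otimes\Delta)$ produces both \eqref{B1-4.23} and \eqref{B2-4.24}. It does not: that identity is an equality of two linear maps on $H^{\otimes 3}$ and, once expanded, yields only \eqref{B2-4.24}. The identity \eqref{B1-4.23} quantifies over an arbitrary $a\otimes g\in A\otimes H^{\op}$, and it is precisely the statement that $B$ is a morphism $H\circledast H\to\underline{1}$ \emph{in} $\mathcal{T}_{A\otimes H^{\op}}^{\#}$, i.e.\ the compatibility
$(m\otimes\textrm{Id})(\textrm{Id}\otimes B)\psi_{H\circledast H}=(m\otimes\textrm{Id})(\textrm{Id}\otimes\psi_{\underline{1}})(B\otimes\textrm{Id})$
from Definition~\ref{transf}, which is built into the phrase ``a morphism $B:C\otimes C\to\underline{1}$ in $\mathcal{M}$'' in Definition~\ref{eqcosep} and is logically prior to, and independent of, the two displayed conditions \eqref{cosep}. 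You do mention early on that you would spell out what ``morphism in $\mathcal{T}_{A\otimes H^{\op}}^{\#}$'' means, but if you then follow the plan as written and only expand the two conditions of \eqref{cosep} plus the h-coseparability identity, you will recover \eqref{B2-4.24}, \eqref{B3-4.25} and \eqref{B4-4.26} but never \eqref{B1-4.23}, and the ``if'' direction of the equivalence will be unprovable. Fix this by adding the morphism condition as a fourth item to expand; the rest of the plan then goes through (the antipode, incidentally, is not needed for \eqref{B4-4.26} --- only $\psi$, $\delta_H$ and the opposite multiplication enter there).
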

Asking that the Casimir morphism $B$ be of a particular form results in easier conditions to check.
\begin{proposition}\cite[Prop.~5.2]{mt}
\label{PropReferee}A cowreath $(A\otimes H^{op},H,\psi )$ is separable via a
Casimir element%
\begin{equation*}
B:H\otimes H\rightarrow A\otimes H^{op}
\end{equation*}%
of the form
\begin{equation*}
h\otimes h^{\prime }\mapsto B^{A}\left( h\otimes h^{\prime }\right)
\otimes 1_{H}
\end{equation*}%
if and only if $B^{A}$ satisfies the following conditions
\begin{equation}
a_{0}B^{A}(ha_{1}\otimes h^{\prime }a_{2})=B^{A}(h\otimes h^{\prime })a,
\label{B1S}
\end{equation}%
\begin{equation}
B^{A}(1\otimes h)_{0}\otimes B^{A}(1\otimes h)_{1}=B^{A}(1\otimes
h_{1})\otimes h_{2},  \label{B2S}
\end{equation}%
\begin{equation}
B^{A}(h_{1}h^{\prime }\otimes h_{2}h^{\prime \prime })=\varepsilon_H
(h)B^{A}(h^{\prime }\otimes h^{\prime \prime })\text{ and }B^{A}\left(
1_{H}\otimes 1_{H}\right) =1_{A}.  \label{B3S}
\end{equation}%
Moreover, whenever $S$ is invertible, it is h-separable if and only if $B$
satisfies the further condition%
\begin{equation}
B^{A}\left( h\otimes 1_{H}\right) \cdot B^{A}\left( 1_{H}\otimes h^{\prime
}\right) =B^{A}\left( h\otimes h^{\prime }\right).  \label{B4S}
\end{equation}
\end{proposition}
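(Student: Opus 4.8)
The plan is to substitute the special form $B^{H}(h\otimes h')=1_{H}$ directly into the four conditions of Theorem~\ref{Theorem:Hbim} and read off what survives. Two elementary facts drive every simplification: $\Delta(1_{H})=1_{H}\otimes 1_{H}$, so each coproduct leg $B^{H}(\,\cdot\,)_{1}$ and $B^{H}(\,\cdot\,)_{2}$ also collapses to $1_{H}$; and $1_{H}$ is a two-sided unit in both $H$ and $H^{\op}$, so any product $1_{H}k$ or $k\,1_{H}$ reduces to $k$. After the substitution, \eqref{B1-4.23}, \eqref{B2-4.24}, \eqref{B3-4.25} and \eqref{B4-4.26} retain $1_{H}$ in the $H^{\op}$-slot and become the $A$-valued identities
\[
a_{0}\,B^{A}(g_{2}ha_{1}\otimes g_{3}h'a_{2})\otimes g_{1}=B^{A}(h\otimes h')a\otimes g,
\]
\[
B^{A}(h_{2}\otimes h')_{0}\otimes h_{1}B^{A}(h_{2}\otimes h')_{1}=B^{A}(h\otimes h_{1}')\otimes h_{2}',
\]
\[
B^{A}(h_{1}\otimes h_{2})=\varepsilon(h)1_{A},\qquad
B^{A}(h\otimes h_{1}')\,B^{A}(h_{2}'\otimes h'')=\varepsilon_{H}(h')\,B^{A}(h\otimes h'').
\]
It then remains to prove these four are equivalent to \eqref{B1S}, \eqref{B2S}, \eqref{B3S}, \eqref{B4S}.

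The forward implications are pure specialisation. Setting $g=1_{H}$ in the first identity gives \eqref{B1S}; setting instead $a=1_{A}$ and applying $\varepsilon_{H}$ to the $H$-leg gives the multiplicative relation $B^{A}(h_{1}h'\otimes h_{2}h'')=\varepsilon_{H}(h)B^{A}(h'\otimes h'')$, i.e. the first half of \eqref{B3S}. Setting $h=1_{H}$ in the second identity yields \eqref{B2S}, while $h=1_{H}$ in the third yields the normalisation $B^{A}(1_{H}\otimes 1_{H})=1_{A}$, completing \eqref{B3S}. Finally $h'=1_{H}$ in the fourth gives \eqref{B4S}.

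For the converse I would first establish the reduction formula $B^{A}(h\otimes h')=B^{A}(1_{H}\otimes S(h)h')$, which follows from the multiplicative half of \eqref{B3S} (specialised to $B^{A}(h_{1}\otimes h_{2}k)=\varepsilon_{H}(h)B^{A}(1_{H}\otimes k)$) together with the antipode axiom $h_{1}S(h_{2})=\varepsilon_{H}(h)1_{H}$. Granting this, the reduced \eqref{B1-4.23} is recovered by using \eqref{B1S} to move $a$ across $B^{A}$ and then \eqref{B3S} with the counit to absorb the legs $g_{2},g_{3}$ back into $g$; the reduced \eqref{B2-4.24} is recovered by rewriting $B^{A}(h_{2}\otimes h')=B^{A}(1_{H}\otimes S(h_{2})h')$, computing its coaction through \eqref{B2S}, and contracting the surviving $h_{1}S(h_{2})$ by the antipode axiom; and the reduced \eqref{B3-4.25} is immediate from \eqref{B3S} with $h'=h''=1_{H}$. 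For the h-separability clause, writing $B^{A}(h\otimes h')=B^{A}(h\otimes 1_{H})B^{A}(1_{H}\otimes h')$ via \eqref{B4S} reduces the reduced \eqref{B4-4.26} to the single identity $B^{A}(1_{H}\otimes h_{1}')B^{A}(h_{2}'\otimes 1_{H})=\varepsilon_{H}(h')1_{A}$; the analogous \emph{reversed-order} identity $B^{A}(h_{1}'\otimes 1_{H})B^{A}(1_{H}\otimes h_{2}')=\varepsilon_{H}(h')1_{A}$ comes at once from the reduced \eqref{B3-4.25} and \eqref{B4S}, and passing between the two orders is effected by the reduction formula $B^{A}(h\otimes 1_{H})=B^{A}(1_{H}\otimes S(h))$ together with its companion $B^{A}(1_{H}\otimes h')=B^{A}(S^{-1}(h')\otimes 1_{H})$ — and it is precisely the latter that forces $S$ to be invertible.

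The substitution and the forward specialisations are mechanical. The main obstacle is the converse for \eqref{B2-4.24} and \eqref{B4-4.26}: the crux is the reduction formula and the antipode bookkeeping it demands, and in particular the recognition that the h-separability equivalence genuinely requires $S^{-1}$, since without it one cannot transfer information between the two tensor slots of $B^{A}$ and thus cannot reverse the order of the product occurring in \eqref{B4S}.
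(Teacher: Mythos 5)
The paper does not prove Proposition~\ref{PropReferee} --- it is quoted verbatim from \cite[Prop.~5.2]{mt} with the proof omitted --- so there is no in-paper argument to compare against line by line. Your derivation is correct: the substitution $B^{H}\equiv 1_{H}$ into Theorem~\ref{Theorem:Hbim} produces exactly the four reduced identities you list, the forward direction is indeed pure specialisation ($g=1_H$, $a=1_A$ plus $\varepsilon$, $h=1_H$, $h'=1_H$), and the converse hinges on the reduction formula $B^{A}(h\otimes h')=B^{A}(1_{H}\otimes S(h)h')$ --- which is precisely the device the paper itself extracts from \eqref{B3S} in its proof of Lemma~\ref{useful1}, where the companion $B^{A}(1_H\otimes h')=B^{A}(S^{-1}(h')\otimes 1_H)$ is likewise the sole point at which $S^{-1}$ enters. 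The only place your sketch is compressed is the final order-reversal for \eqref{B4-4.26}: the literal substitution of the two reduction formulas into the reversed-order identity leaves an $S^{2}$ dangling, and the clean route is to note that companion $+$ \eqref{B4S} $+$ reduction give $B^{A}(1\otimes a)B^{A}(1\otimes b)=B^{A}(S^{-1}(a)\otimes b)=B^{A}(1\otimes ab)$, after which $B^{A}(1\otimes h'_{1})B^{A}(h'_{2}\otimes 1)=B^{A}(1\otimes h'_{1}S(h'_{2}))=\varepsilon(h')1_{A}$ follows from the antipode axiom; this uses exactly the tools you name, so it is a presentational tightening rather than a gap.
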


\begin{remark}\label{sep2}
Proposition~\ref{PropReferee} was proved in \cite{mt} in order to construct examples of separable cowreaths which are not $h$-separable or not Frobenius (see Remarks $6.2$ ibid.).
Notice that a cowreath $(A \otimes H^{op},H, \psi)$ for which there is no map $B:H \otimes H \rightarrow A \otimes H^{op}$ of the form $h \otimes h' \mapsto B^{A}(h \otimes h') \otimes 1_H$ satisfying \eqref{B1S}-\eqref{B3S}, is not necessarily non-separable. Separability for such a cowreath could be attained via a morphism of a much more complicated form. The conditions under which an example of cowreath $(A \otimes H^{op},H, \psi)$ is separable via a Casimir morphism of a general form are described in \cite{mt2}.
\end{remark}

In view of the previous remark, we introduce the following definition to make a clear distinction.
\begin{definition}\label{rtsep}
A cowreath $(A \otimes H^{op},H, \psi)$ for which there is a map $B:H \otimes H \rightarrow A \otimes H^{op}$ of the form $h \otimes h' \mapsto B^{A}(h \otimes h') \otimes 1_H$ satisfying \eqref{B1S}-\eqref{B3S}, will be called right-trivially separable (rt-separable, for short).
Similarly, a cowreath $(A \otimes H^{op},H, \psi)$ for which there is a map $B:H \otimes H \rightarrow A \otimes H^{op}$ of the form $h \otimes h' \mapsto B^{A}(h \otimes h') \otimes 1_H$ satisfying \eqref{B1S}-\eqref{B4S} will be called right-trivially $h$-separable (rth-separable, for short).
\end{definition}

\begin{remark}
As observed in Remark~\ref{sep2}, a cowreath can be (h-)separable even if it is not rt(h)-separable. Clearly, a rt(h)-separable cowreath is (h-)separable.
\end{remark}

Our final goal is to determine a family of rt-separable cowreaths built with a $2^{n+1}$-dimensional Clifford algebra $A=Cl(\alpha,\beta_i, \gamma_i, \lambda_{ij})$ and its comodule algebra structure over the Hopf algebra $E(n)$, for every natural number $n$. We will show that in this case rt(h)-separability can be characterized by a family of equalities involving a tuple of maps related to the comodule algebra structure of $A$. We are not sure if the same approach could be used to characterize plain separability (i.e. conditions \eqref{B1-4.23}-\eqref{B4-4.26}), but it should be, at least in principle, a much harder task.

\subsection{The Hopf algebras \texorpdfstring{$E(n)$}{E(n)}}

From now on we will always assume $\chara \Bbbk \neq 2$.
The Hopf algebras denoted by $E(n)$ are a family that generalizes Sweedler's Hopf algebra $H_4=E(1)$ and that were introduced in \cite[p. $755$]{BDG} and studied in \cite{CD, CC, PVO, PVO2}. 
\begin{definition}{\cite[p.$18$]{CD}}\label{defEn} We denote by $E(n)$ the $2^{n+1}$-dimensional Hopf algebra over $\Bbbk$, generated by elements $g, x_1, \ldots, x_n$ such that $g^2=1$, $x_i^2=0$, $gx_i=-x_ig$ for any $i=1, \ldots, n$ and $x_ix_j=-x_jx_i$ for $i,j=1, \ldots, n$, $i <j$.
\end{definition}
The canonical Hopf algebra structure of $E(n)$ is given by
\[
\def\arraystretch{1.5}
\begin{array}{l l l l}
\Delta(g)=g \otimes g, & \varepsilon(g)=1, & S(g)=g^{-1}=g, &\\
\Delta(x_i)=x_i \otimes g +1 \otimes x_i, & \varepsilon(x_i)=0, & S(x_i)=gx_i, &  i=1, \ldots, n.
\end{array}
\]
For $P=\lbrace i_1,i_2,\ldots,i_s \rbrace \subseteq \lbrace 1,2, \ldots,n \rbrace$ such that $i_1 < i_2 < \ldots < i_s$, we denote $x_P = x_{i_1}x_{i_2}\cdots x_{i_s}$. If $P=\emptyset$ then $x_{\emptyset} = 1$. The set $\lbrace g^jx_P \ | \ P\subseteq \lbrace 1, \ldots ,n \rbrace, j \in \lbrace 0,1 \rbrace \rbrace$ is a basis of $E(n)$.
Let $F=\lbrace i_{j_1}, i_{j_2}, \ldots, i_{j_r} \rbrace$ be a subset of $P$ and define
\[S(F,P):=(j_1+\cdots+j_r)-\frac{r(r+1)}{2} \quad \textrm{and} \quad S(\emptyset,P):=0.\]
Then computations show that
\begin{equation}\label{deltagx}
\Delta(g^jx_P)=\sum_{F \subseteq P} (-1)^{S(F,P)} g^j x_F \otimes g^{|F|+j} x_{P \setminus F},
\end{equation}
\begin{equation}\label{antipode}
S(g^jx_P) = (-1)^{j|P|}g^{j+|P|}x_P,
\end{equation}
\begin{equation}\label{reorder}
(-1)^{S(F,P)}x_Fx_{P \setminus F}=x_P.
\end{equation}

In \cite{FR2} a characterization of $E(n)$-comodule algebra structures for finite-dimensional algebras was proved.
\begin{theorem}\cite[Thm. 4.4]{FR2}\footnote{The statement in \cite{FR2} replaces $(-1)^{\frac{|P|(|P|+1)}{2}}$ with $(-1)^{\left\lfloor \frac{|P|+1}{2}\right\rfloor}$.}\label{maingen}
Let $A$ be a finite dimensional algebra over a field $\Bbbk$ of characteristic $\chara \Bbbk \neq 2$. Then an $E(n)$-comodule algebra structure on $A$ is given by:
\begin{equation}\label{coactionfinal}
\rho(a)= \sum_{j=0,1}\underset{P \subseteq \lbrace 1, \ldots, n \rbrace}{\sum}(-1)^{\frac{|P|(|P|+1)}{2}} \varphi^j(d_P(a)) \otimes \frac{x_P +(-1)^{|P|+j}gx_P}{2},
\end{equation}
where
\begin{enumerate}
\item $\varphi$ is an automorphism  of $A$ of order $o(\varphi) \leq 2$ (i.e. an involution of $A$),
\item $d_{\emptyset}=\textrm{Id}_A$ and $d_P=d_{i_1}d_{i_2}\cdots d_{i_{|P|}}$ is a composition of $\varphi$-derivations such that $d_i^2 \equiv 0$, $\varphi d_i = -d_i \varphi$ and $d_id_j=-d_jd_i$.
\end{enumerate}
\end{theorem}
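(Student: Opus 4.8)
The plan is to work in a basis of $E(n)$ adapted to the right-hand side of \eqref{coactionfinal}. Since $\chara \Bbbk\neq 2$, the elements $\pi_0:=\tfrac{1+g}{2}$ and $\pi_1:=\tfrac{1-g}{2}$ are orthogonal idempotents with $\pi_0+\pi_1=1$, and using $gx_P=(-1)^{|P|}x_Pg$ one checks that $e_{j,P}:=\tfrac{x_P+(-1)^{|P|+j}gx_P}{2}=x_P\pi_{j}$, so that $\{e_{j,P}\mid j\in\{0,1\},\,P\subseteq\{1,\dots,n\}\}$ is again a basis of $E(n)$. Any linear map $\rho\colon A\to A\otimes E(n)$ may then be written uniquely as $\rho(a)=\sum_{j,P}\varphi_{j,P}(a)\otimes e_{j,P}$ for linear maps $\varphi_{j,P}\colon A\to A$, and the statement becomes the assertion that $\varphi_{j,P}=(-1)^{\frac{|P|(|P|+1)}{2}}\varphi^{\,j}d_P$ for a unique involutive automorphism $\varphi$ and a unique family of anticommuting $\varphi$-derivations $d_1,\dots,d_n$ with $d_i^2=0$. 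I would first record three structural computations in this basis: the counit $\varepsilon(e_{j,P})=\delta_{j,0}\,\delta_{P,\emptyset}$; the product $e_{s,Q}\,e_{t,R}=\delta_{Q\cap R=\emptyset}\,\delta_{t\equiv s+|R|}\,(-1)^{S(Q,Q\sqcup R)}\,e_{t,Q\sqcup R}$, obtained from \eqref{reorder}; and, from \eqref{deltagx} together with $\Delta(\pi_0)=\pi_0\otimes\pi_0+\pi_1\otimes\pi_1$ and $\Delta(\pi_1)=\pi_0\otimes\pi_1+\pi_1\otimes\pi_0$, the coproduct $\Delta(e_{m,R})=\sum_{F\subseteq R}\sum_{a+b\equiv m}(-1)^{S(F,R)+|F|(|R\setminus F|+b)}\,e_{a,F}\otimes e_{b,R\setminus F}$ (sums of indices understood modulo $2$).

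Next I would translate the four comodule-algebra axioms into conditions on the $\varphi_{j,P}$. The counit axiom immediately gives $\varphi_{0,\emptyset}=\mathrm{Id}_A$. Coassociativity, after matching coefficients of $e_{k,Q}\otimes e_{j,P}$, is equivalent to the master relation
\[
\varphi_{k,Q}\,\varphi_{j,P}=
\begin{cases}
(-1)^{S(Q,Q\sqcup P)+|Q|(|P|+j)}\,\varphi_{k+j,\,Q\sqcup P}, & Q\cap P=\emptyset,\\
0, & Q\cap P\neq\emptyset,
\end{cases}
\]
while multiplicativity $\rho(ab)=\rho(a)\rho(b)$, after matching coefficients of $e_{j,P}$ via the product formula, is equivalent to
\[
\varphi_{j,P}(ab)=\sum_{R\subseteq P}(-1)^{S(P\setminus R,\,P)}\,\varphi_{j+|R|,\,P\setminus R}(a)\,\varphi_{j,R}(b),
\]
and the unit axiom $\rho(1_A)=1_A\otimes 1_H$ becomes $\varphi_{0,\emptyset}(1_A)=\varphi_{1,\emptyset}(1_A)=1_A$ together with $\varphi_{j,P}(1_A)=0$ for $P\neq\emptyset$.

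From here the forward direction is a matter of reading off generators. I would set $\varphi:=\varphi_{1,\emptyset}$ and $d_i:=-\varphi_{0,\{i\}}$. Specialising the multiplicativity relation to $P=\emptyset$ shows $\varphi$ is an algebra endomorphism, and to $P=\{i\}$ shows $d_i(ab)=d_i(a)b+\varphi(a)d_i(b)$, i.e.\ each $d_i$ is a $\varphi$-derivation; the unit axiom gives $\varphi(1_A)=1_A$. Specialising the master relation then yields $\varphi^2=\mathrm{Id}_A$ (so $\varphi$ is bijective, hence an involutive automorphism), $d_i^2=0$ (from $\{i\}\cap\{i\}\neq\emptyset$), and $\varphi d_i=-d_i\varphi$, $d_id_j=-d_jd_i$. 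Finally an induction on $|P|$, peeling off the smallest index and using the master relation together with the identity $m+\tfrac{(m-1)m}{2}=\tfrac{m(m+1)}{2}$ for the accumulated sign, gives $\varphi_{0,P}=(-1)^{\frac{|P|(|P|+1)}{2}}d_P$ and, via $\varphi_{1,P}=\varphi\,\varphi_{0,P}$, the full formula \eqref{coactionfinal}. For the converse I would define the $\varphi_{j,P}$ by that formula and verify directly that they satisfy the counit relation, the master relation, the multiplicativity relation and the unit conditions, whence $\rho$ is a comodule-algebra structure.

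The main obstacle is the sign bookkeeping. Establishing the master relation and, especially, the multiplicativity relation for a general $d_P=d_{i_1}\cdots d_{i_{|P|}}$ requires a generalized Leibniz rule for a product of anticommuting $\varphi$-derivations, whose expansion over subsets $R\subseteq P$ must be reconciled term by term with the combinatorial factor $(-1)^{S(P\setminus R,\,P)}$ and the parity twist $\varphi^{\,j+|R|}$. This rests on cocycle-type identities for $S(F,P)$ under splitting of subsets and on the reordering sign \eqref{reorder}; checking that all these signs collapse to the single exponent, and in particular that $\tfrac{|P|(|P|+1)}{2}\equiv\lfloor\tfrac{|P|+1}{2}\rfloor \pmod 2$ as in the footnoted variant, is the delicate part of the argument.
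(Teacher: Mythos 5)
The paper does not prove this statement: Theorem~\ref{maingen} is imported verbatim (up to the rewriting of the sign noted in the footnote) from \cite[Thm.~4.4]{FR2}, and the present article explicitly omits proofs of the preliminary results, so there is no internal argument to compare yours against.

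Taken on its own terms, your proposal is a sound and essentially complete strategy. The basis $e_{j,P}=x_P\pi_j$ with $\pi_j=\tfrac{1+(-1)^jg}{2}$ is correct (one checks $x_P\pi_j=\tfrac{x_P+(-1)^{|P|+j}gx_P}{2}$ using $x_Pg=(-1)^{|P|}gx_P$), and your three structural computations check out: $\varepsilon(e_{j,P})=\delta_{j,0}\delta_{P,\emptyset}$; the product formula follows from $\pi_sx_R=x_R\pi_{s+|R|}$ and \eqref{reorder}; and the coproduct formula follows from \eqref{deltagx} together with $g^{|F|}x_{R\setminus F}\pi_b=(-1)^{|F|(|R\setminus F|+b)}x_{R\setminus F}\pi_b$. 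The master relation and the multiplicativity relation are exactly what coefficient matching yields, and the extraction of $\varphi=\varphi_{1,\emptyset}$, $d_i=-\varphi_{0,\{i\}}$ with the claimed properties is correct; I verified in particular that peeling off the smallest index gives $\varphi_{0,P}=(-1)^{|P|-1}\varphi_{0,\{i_1\}}\varphi_{0,P\setminus\{i_1\}}$, whence the exponent $\tfrac{|P|(|P|+1)}{2}$ by your identity, and that the master relation is consistent with $\varphi_{j,P}=(-1)^{\frac{|P|(|P|+1)}{2}}\varphi^jd_P$ because $\tfrac{(q+p)(q+p+1)}{2}\equiv\tfrac{q(q+1)}{2}+\tfrac{p(p+1)}{2}+qp\pmod 2$. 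The only part left as a sketch is the converse verification (the generalized Leibniz rule for $d_P$ against the factor $(-1)^{S(P\setminus R,P)}\varphi^{j+|R|}$), which you correctly flag as the sign-sensitive step; it is routine given Lemma~\ref{lemmacrucial}-type identities for $S(F,P)$, and the analogous computations in Lemmas~\ref{lemmader}--\ref{lemmader2} of this paper confirm the mechanism works. Whether this coincides with the route taken in \cite{FR2} cannot be judged from this document, but as a free-standing proof outline it is convincing.
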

For sake of completeness we recall the definitions of involution and skew-derivation.
\begin{definition}
Let $A$ be an algebra and $\varphi:A \rightarrow A$ an algebra endomorphism. We will call $\varphi$ an algebra involution (or simply an involution) if $\varphi^2=\textrm{Id}_A$. 

\medskip

Fix an algebra map $\varphi:A \rightarrow A$. A $\Bbbk$-linear map $d:A \rightarrow A$ such that
\[d(ab)=d(a)b+\varphi(a)d(b)\]
for every $a, b \in A$ is called a $\varphi$-derivation (or a skew-derivation).
\end{definition}

\subsection{Clifford algebras}

The algebras $E(n)$ are a particular subfamily of well-known Clifford algebras. A Clifford algebra $Cl(V,q)$ is usually defined as a quotient of the tensor algebra $T(V)$ built on a free vector space $V$ by an ideal whose generators depend on a quadratic form $q$ \cite{C,La}. We prefer to make use of an equivalent characterization.
\begin{definition}\cite[Def.~$1$]{PVO2}\label{clt}
Let $\alpha, \beta_i,\gamma_i \in \Bbbk$ for $i=1, \ldots, n$ and $\lambda_{ij} \in \Bbbk$ for $i, j \in \lbrace 1, \ldots, n \rbrace$, $i<j$.
The Clifford algebra $Cl(\alpha,\beta_i,\gamma_i, \lambda_{ij})$ is the unital associative algebra generated by elements $G, X_1, \ldots, X_n$ such that $G^2=\alpha$, $X_i^2=\beta_i$, $GX_i+X_iG=\gamma_i$ for all $i=1, \ldots, n$ and $X_iX_j+X_jX_i=\lambda_{ij}$ for all $i,j \in \lbrace 1, \ldots, n \rbrace$ with $i<j$. A $\Bbbk$-basis for this algebra is given by $\lbrace G^j X_{P} \rbrace$, where $j=0,1$ and $X_P=X_{i_1} \cdots X_{i_s}$ with $P=\lbrace i_1 < i_2 < \ldots < i_s \rbrace \subseteq \lbrace 1, \ldots, n \rbrace$. 
\end{definition}
\begin{notation}
We warn the reader about the slightly misleading notation $Cl(\alpha,\beta_i,\gamma_i, \lambda_{ij})$. This should always be read as
\[Cl(\alpha, \beta_1, \beta_2, \ldots, \beta_n, \gamma_1, \gamma_2, \ldots, \gamma_n, \lambda_{11}, \lambda_{12}, \ldots, \lambda_{1n}, \lambda_{21}, \ldots, \lambda_{nn})\]
and considered a single algebra (and not a family) once all the defining scalars are fixed.
\end{notation}

\begin{remark}
Definition~\ref{clt} appears for the first time in \cite{PVO2}, where the authors name these objects ``Clifford-type'' algebras. As shown in \cite[Thm.~3.2]{FR2}, if $\Bbbk$ is a field of characteristic $\chara \Bbbk \neq 2$, Clifford-type algebras over $\Bbbk$ are (isomorphic to) classical Clifford algebras defined as quotients of tensor algebras, hence we omit the suffix ``-type''.
\end{remark}
In \cite{PVO2} it was proved that a $2^{n+1}$-dimensional Clifford algebra $A=Cl(\alpha, \beta_i, \gamma_i, \lambda_{ij})$ admits a canonical $E(n)$-comodule algebra structure $\rho: A \rightarrow A \otimes E(n)$ that makes it a cleft extension of the ground field $\Bbbk$. This coaction is given by
\begin{align}
\rho(1_A)&=1_A \otimes 1_{E(n)}\label{canonical1}\\
\rho(G) &=G \otimes g \label{canonicalG}\\
\rho(X_i) &= X_i \otimes g + 1_A \otimes x_i, \quad i=1, \ldots, n\label{canonicalXi} \\
\rho(GX_i) &= GX_i \otimes 1_{E(n)} +  G \otimes gx_i, \quad i=1, \ldots, n. \label{canonicalGXi}
\end{align}
\begin{proposition}\label{canonictuple}
Let $A=Cl(\alpha, \beta_i, \gamma_i, \lambda_{ij})$ be a Clifford algebra. The tuple of maps corresponding to its canonical $E(n)$-comodule algebra structure is $(\sigma, d_1, \ldots, d_n)$, where $\sigma$ is the main involution of $A$ and each $d_j$ is determined by $d_j(G)=0$, $d_j(X_i)=-\delta_{ij}$ for every $i=1,\ldots, n$. 
\end{proposition}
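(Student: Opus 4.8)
The plan is to exploit the fact that formula \eqref{coactionfinal} not only produces a comodule algebra structure out of a tuple $(\varphi, d_1, \ldots, d_n)$, but is invertible: since $\{x_P, gx_P : P \subseteq \{1,\ldots,n\}\}$ is a basis of $E(n)$, the decomposition of $\rho(a)$ along this basis is unique, so the elements $\varphi^{\epsilon}(d_P(a))$ can be read off uniquely, and hence the tuple attached to the canonical coaction is uniquely determined. I would therefore not guess the tuple and verify, but instead extract $\varphi$ and the $d_j$ directly from \eqref{canonical1}--\eqref{canonicalGXi}. By Theorem~\ref{maingen} the canonical coaction, being an $E(n)$-comodule algebra structure, does arise from some admissible tuple, so this extraction is legitimate.

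First I would isolate $\varphi$ from the part of \eqref{coactionfinal} indexed by $P = \emptyset$, where $d_\emptyset = \mathrm{Id}_A$. This part contributes only to the two basis vectors $x_\emptyset = 1_{E(n)}$ and $g x_\emptyset = g$, giving the $1_{E(n)}$-component of $\rho(a)$ as $\tfrac12(a + \varphi(a))$ and the $g$-component as $\tfrac12(a - \varphi(a))$; since $\chara \Bbbk \neq 2$ these combine to $\varphi(a) = (\text{coeff. of } 1_{E(n)}) - (\text{coeff. of } g)$. Evaluating on $\rho(G) = G \otimes g$ and $\rho(X_i) = X_i \otimes g + 1_A \otimes x_i$ yields $\varphi(G) = -G$ and $\varphi(X_i) = -X_i$, so $\varphi$ is exactly the main (grade) involution $\sigma$, the automorphism negating every generator. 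That $\sigma$ is a well-defined involution is immediate, as negating all generators preserves each defining relation of Definition~\ref{clt} and squares to the identity.

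Next I would isolate each $d_j$ by the analogous computation at $P = \{j\}$, the only index contributing to the basis vectors $x_{\{j\}} = x_j$ and $gx_{\{j\}} = gx_j$. Keeping track of the sign $(-1)^{|P|(|P|+1)/2} = -1$ for $|P| = 1$, one finds that the $x_j$-component of $\rho(a)$ equals $-\tfrac12(d_j(a) + \varphi(d_j(a)))$ and the $gx_j$-component equals $\tfrac12(d_j(a) - \varphi(d_j(a)))$, whence $d_j(a) = (\text{coeff. of } gx_j) - (\text{coeff. of } x_j)$. Reading this off \eqref{canonicalG} and \eqref{canonicalXi} gives $d_j(G) = 0$ and $d_j(X_i) = -\delta_{ij}1_A$, as claimed; since $\varphi = \sigma$ and each $d_j$ is a $\sigma$-derivation, these values on the generators determine the maps on all of $A$.

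The computations are routine. Because we invoke Theorem~\ref{maingen} to know a priori that the canonical coaction comes from some admissible tuple, the identities required there ($d_j^2 \equiv 0$, $\sigma d_j = -d_j\sigma$, $d_i d_j = -d_j d_i$) hold automatically and need not be re-checked. The only point demanding a little care is the sign bookkeeping in \eqref{coactionfinal} when splitting each basis vector into its $x_P$ and $gx_P$ parts. A direct alternative --- defining $\sigma$ and the $d_j$ by the stated values, checking compatibility with the Clifford relations together with the three derivation identities, and substituting back into \eqref{coactionfinal} to recover \eqref{canonical1}--\eqref{canonicalGXi} on the generators --- would also work, but it requires the extra verification of well-definedness that the inversion approach sidesteps.
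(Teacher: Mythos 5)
Your proposal is correct and follows essentially the same route as the paper: both invoke the explicit expansion of \eqref{coactionfinal} from Theorem~\ref{maingen}, compare it with \eqref{canonical1}--\eqref{canonicalGXi} on the generators to force $\varphi(G)=-G$, $\varphi(X_i)=-X_i$, $d_j(G)=0$, $d_j(X_i)=-\delta_{ij}$, and then identify $\varphi$ with the main involution $\sigma$. Your explicit inversion of the formula via the basis $\lbrace x_P, gx_P\rbrace$ (recovering $\varphi(a)$ and $d_j(a)$ as signed combinations of the $1$-, $g$-, $x_j$- and $gx_j$-components) is just a more detailed account of the paper's ``one can easily check'' step, and your sign bookkeeping is accurate.
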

\begin{proof}
By Theorem~\ref{maingen} we know that a $E(n)$ coaction $\rho:A \rightarrow A \otimes E(n)$ on $A$ can be expressed explicitly as
\begin{align*}
\rho(a)&=\frac{a+\varphi(a)}{2} \otimes 1+ \frac{a-\varphi(a)}{2} \otimes g-\sum_{i=1}^n\left[ d_i \left(\frac{a-\varphi(a)}{2} \right) \otimes x_i-d_i \left(\frac{a+\varphi(a)}{2}\right) \otimes gx_i \right]+\\
&+\underset{|P|>1}{\sum}(-1)^{\frac{|P|(|P|+1)}{2}}\left[  d_P(a) \otimes \frac{x_P +(-1)^{|P|}gx_P}{2} + \varphi(d_P(a)) \otimes \frac{x_P +(-1)^{|P|+1}gx_P}{2} \right]
\end{align*}
for all $a \in A$, where $(\varphi, d_1, \ldots, d_n)$ is a $n+1$-uple of suitable maps. By definition, $\rho$ is the canonical $E(n)$-comodule structure of $A$ if, and only if, equalities \eqref{canonical1}-\eqref{canonicalGXi} are satisfied and one can easily check that this is true if, and only if, the maps $(\varphi, d_1, \ldots, d_n)$ satisfy
\begin{align*}
\varphi(G)&=-G, \\
\varphi(X_i)&=-X_i \quad \textrm{for every } i=1, \ldots, n,\\
d_j(G)&=0 \qquad \textrm{for every } j=1, \ldots, n,\\
d_j(X_i)&=-\delta_{ij} \qquad \textrm{for every } i,j=1, \ldots, n.
\end{align*}
Since $\lbrace G, X_1, \ldots, X_n \rbrace$ is a basis for the vector space $V$ underlying the Clifford algebra $A$, we see that $\varphi|_{A_0}=\textrm{Id}_{A_0}$ and $\varphi|_{A_1}=-\textrm{Id}_{A_1}$, therefore $\varphi=\sigma$, the main involution of the Clifford algebra $A$ (see e.g. \cite[Def. IV.3.7]{La}).
\end{proof}
\begin{notation}
The choice of a distinguished generator $G$ follows the notation established in \cite{PVO2}. It comes from the fact that these algebras can be regarded as $E(n)$-cleft extensions over the ground field $\Bbbk$. The generators of the Hopf algebra $E(n)$ are usually called $g$, $x_1$, \ldots, $x_n$ \--- following a quite widespread notation for grouplike elements and skew-primitives elements \--- whence the choice of letters for the generators of $Cl(\alpha,\beta_i,\gamma_i, \lambda_{ij})$. The reader interested in the exact correspendence between these two families of generators is referred to \cite{PVO2}. We will just point out that each Hopf algebra $E(n)$ can be regarded as a Clifford-type algebra itself, namely $E(n)=Cl(1,0,0,0)$. 
Nonetheless, in some cases, it will be convenient to treat $G$ exactly as one of the remaining generators. To this aim we introduce the notation
\[X_0:=G, \quad \beta_0:=\alpha, \quad \textrm{and} \quad \lambda_{0i}:=\gamma_i \quad \textrm{for every } i=1, \ldots, n.\]
In this way we can still write $X_R=X_{i_1}\cdots X_{i_s}\subseteq \lbrace 0,1, \ldots, n \rbrace$ with $R=\lbrace i_1 < \ldots <i_s \rbrace$ to indicate an element $G^jX_P$ of the canonical basis. Clearly $j \neq 0 \iff i_1 =0$. Notice that with this notation, \eqref{reorder} holds for every $F \subseteq P \subseteq \lbrace 0,1, \ldots, n \rbrace$.
\end{notation}

\section{Technical results}\label{technicalresults}

In this section we prove a series of simple, yet crucial results that will help us with calculations in the sequel. 

\subsection{Perfect matchings}
Perfect matchings are usually defined in the contest of graphs, but they can also be useful when working with partitions of a set. We will use them to explicitly describe the Casimir element realizing the separability of each cowreath in our main result.

Given a graph $\mathcal{G}=(V,E)$ with vertices $V$ and edges $E$, a perfect matching in $\mathcal{G}$ is a subset $M$ of $E$ such that every vertex in $V$ is adiacent to 
exactly one edge in $M$ (an equivalent notion is that of a 1-factor of $\mathcal{G}$). Notice that there is a perfect matching in $\mathcal{G}$
if, and only if, $|V|=2n$. 
\begin{figure}[H]
\centering
\begin{tikzpicture}
        \filldraw [black](1,0) circle [radius=2pt]
						(1,1) circle [radius=2pt]
						(2,1) circle [radius=2pt]
						(2,0) circle [radius=2pt];	
		\coordinate [label=below:$3$] (A) at (1,0);
		\coordinate [label=above:$1$] (B) at (1,1);
		\coordinate [label=above:$2$] (C) at (2,1);
		\coordinate [label=below:$4$] (D) at (2,0);
		\draw (A)--(D) (B)--(C); 	

        \filldraw [black](5,0) circle [radius=2pt]
						(5,1) circle [radius=2pt]
						(6,1) circle [radius=2pt]
						(6,0) circle [radius=2pt];	
		\coordinate [label=below:$3$] (E) at (5,0);
		\coordinate [label=above:$1$] (F) at (5,1);
		\coordinate [label=above:$2$] (G) at (6,1);
		\coordinate [label=below:$4$] (H) at (6,0);
		\draw (E)--(F) (G)--(H); 	

        \filldraw [black](9,0) circle [radius=2pt]
						(9,1) circle [radius=2pt]
						(10,1) circle [radius=2pt]
						(10,0) circle [radius=2pt];	
		\coordinate [label=below:$3$] (L) at (9,0);
		\coordinate [label=above:$1$] (M) at (9,1);
		\coordinate [label=above:$2$] (N) at (10,1);
		\coordinate [label=below:$4$] (O) at (10,0);
		\draw (L)--(N) (M)--(O); 	
\end{tikzpicture}
\caption{Perfect matchings in the complete graph $K_4$.}
\label{2edges}
\end{figure}
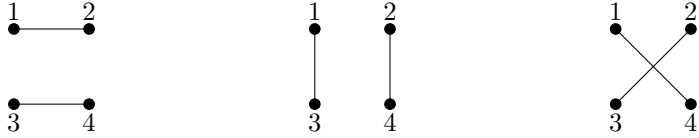

\begin{remark}
If $\mathcal{G}=K_{2n}$, the complete graph on $n$ vertices, then $\mathcal{G}$ has $(2n-1)!!$ perfect matchings. 
\end{remark}

\begin{proof}
A perfect matching in $K_{2n}$ is obtained by choosing pairs of vertices from $V$ without any repetition and until every vertex is picked.
The total occurrences are
\[\binom{2n}{2}\binom{2n-2}{2}\binom{2n-4}{2} \cdots \binom{2}{2}=\frac{(2n)!}{2^{n}},\]
if we keep track of the order in which we choose each pair. Since each perfect matching is determined by a partition of the vertices and not by the order in which we pick them, we need to identify cases that are just permutations of each other. Therefore we get that the number of perfect matchings in $K_{2n}$ is
\[\frac{(2n)!}{2^n n!}=(2n-1)!!.\]
\end{proof}
It is clear that we can extend the concept of perfect matching to any set having an even number of elements.
\begin{definition}\label{defpm}
Let $S=\lbrace s_1, s_2, s_3, \ldots, s_{2n} \rbrace$ be a non-empty set with cardinality $2n$. Then we define a perfect matching for $S$ as a partition of $S$ into blocks of cardinality $2$. If $S=\emptyset$ we define $\emptyset$ as its only perfect matching. We also denote with $\pi_2(S)$ the set of all perfect matchings of $S$.
\end{definition}
For example if $S=\lbrace s_1, s_2, s_3, s_4, s_5, s_6 \rbrace$ we have that 
\[\mathcal{P}_1=\lbrace \lbrace s_1,s_2 \rbrace, \lbrace s_3,s_4 \rbrace, \lbrace s_5,s_6 \rbrace \rbrace \textrm{ and } \mathcal{P}_2=\lbrace \lbrace s_1,s_4 \rbrace, \lbrace s_2,s_6 \rbrace, \lbrace s_3,s_5 \rbrace \rbrace\]
are perfect matchings for $S$. Notice that $|\pi_2(S)|=(|S|-1)!!$, since the perfect matchings for a non-empty set $S$ correspond exactly to those in the complete graph $K_{|S|}$, and $(-1)!!=1$.
\begin{proposition}\label{prop:pmrecur}
Let $S=\lbrace s_1, s_2, \ldots, s_{2n} \rbrace$ be a set of cardinality $|S|=2n \geq 2$. Pick an element $\overline{s} \in S$. Then we have
\[\pi_2(S)=\left \lbrace \mathcal{P} \sqcup \lbrace \lbrace i, \overline{s} \rbrace \rbrace \ | \ i \in S \setminus \lbrace \overline{s} \rbrace, \ \mathcal{P} \in \pi_2(S \setminus \lbrace i, \overline{s} \rbrace) \right \rbrace.\]
\end{proposition}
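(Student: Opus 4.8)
The plan is to establish the set equality by proving the two inclusions separately, exploiting the fact that in any perfect matching exactly one block contains the distinguished element $\overline{s}$.

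For the inclusion $\pi_2(S) \subseteq \lbrace \mathcal{P} \sqcup \lbrace \lbrace i, \overline{s} \rbrace \rbrace \rbrace$, I would start with an arbitrary $\mathcal{Q} \in \pi_2(S)$. Since $\mathcal{Q}$ is a partition of $S$ into blocks of cardinality $2$ and $\overline{s} \in S$, there is a unique block of $\mathcal{Q}$ containing $\overline{s}$; by the block-size condition this block must be of the form $\lbrace i, \overline{s} \rbrace$ for a uniquely determined $i \in S \setminus \lbrace \overline{s} \rbrace$. Setting $\mathcal{P} := \mathcal{Q} \setminus \lbrace \lbrace i, \overline{s} \rbrace \rbrace$, the remaining blocks partition $S \setminus \lbrace i, \overline{s} \rbrace$ into pairs, so $\mathcal{P} \in \pi_2(S \setminus \lbrace i, \overline{s} \rbrace)$ and $\mathcal{Q} = \mathcal{P} \sqcup \lbrace \lbrace i, \overline{s} \rbrace \rbrace$ lies in the right-hand side.

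For the reverse inclusion, I would take any $i \in S \setminus \lbrace \overline{s} \rbrace$ and any $\mathcal{P} \in \pi_2(S \setminus \lbrace i, \overline{s} \rbrace)$, and verify directly that $\mathcal{P} \sqcup \lbrace \lbrace i, \overline{s} \rbrace \rbrace$ is a partition of $S$ into blocks of size $2$. The blocks of $\mathcal{P}$ are pairwise disjoint and cover $S \setminus \lbrace i, \overline{s} \rbrace$; adjoining the pair $\lbrace i, \overline{s} \rbrace$, which is disjoint from all of them, yields a collection of disjoint $2$-element blocks whose union is all of $S$. Hence it belongs to $\pi_2(S)$ by Definition~\ref{defpm}.

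The argument is essentially bookkeeping, so there is no real obstacle; the only point requiring minor care is the well-definedness of the disjoint union $\sqcup$, namely that $\lbrace i, \overline{s} \rbrace$ cannot already appear as a block of $\mathcal{P}$. This is automatic because every block of $\mathcal{P} \in \pi_2(S \setminus \lbrace i, \overline{s} \rbrace)$ is contained in $S \setminus \lbrace i, \overline{s} \rbrace$ and therefore contains neither $i$ nor $\overline{s}$, so the notation $\sqcup$ is justified and the two constructions above are mutually inverse.
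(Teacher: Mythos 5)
Your proof is correct, but it takes a partly different route from the paper's. For the inclusion $\left\lbrace \mathcal{P} \sqcup \lbrace \lbrace i, \overline{s} \rbrace \rbrace \right\rbrace \subseteq \pi_2(S)$ both arguments coincide: a direct verification that adjoining the block $\lbrace i, \overline{s}\rbrace$ to a perfect matching of $S \setminus \lbrace i, \overline{s}\rbrace$ yields a partition of $S$ into pairs. The difference is in the opposite inclusion. You prove it directly, by observing that any $\mathcal{Q} \in \pi_2(S)$ has a unique block containing $\overline{s}$, necessarily of the form $\lbrace i, \overline{s}\rbrace$, and that deleting it leaves a perfect matching of $S \setminus \lbrace i, \overline{s}\rbrace$. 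The paper instead deduces this inclusion from a cardinality count: it computes that the right-hand side $T$ has $(|S|-1)\cdot(|S|-3)!! = (|S|-1)!!$ elements, which equals $|\pi_2(S)|$ by the earlier remark on perfect matchings of $K_{2n}$, so $T \subseteq \pi_2(S)$ forces equality. Your version is more elementary and self-contained: it does not invoke the double-factorial formula, nor does it need the (true but tacit in the paper) fact that distinct pairs $(i,\mathcal{P})$ produce distinct elements of $T$, which is what justifies the paper's computation of $|T|$ as a product. The paper's version buys a shorter forward inclusion at the price of leaning on that counting machinery. Your closing remark on the well-definedness of $\sqcup$ is a sensible extra check and causes no issues.
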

\begin{proof}
Let 
\[T:=\left \lbrace \mathcal{P} \sqcup \lbrace \lbrace i, \overline{s} \rbrace \rbrace \ | \ i \in S \setminus \lbrace \overline{s} \rbrace, \ \mathcal{P} \in \pi_2(S \setminus \lbrace i, \overline{s} \rbrace) \right \rbrace.\]
Observe that for every $i \in S \setminus \lbrace \overline{s} \rbrace$ we have $|\pi_2(S \setminus \lbrace i, \overline{s} \rbrace)|=(|S|-3)!!$. On the other hand $|S \setminus \lbrace \overline{s} \rbrace|=|S|-1$, therefore
\[|T|=(|S|-1) \cdot (|S|-3)!!=(|S|-1)!!.\] Since $T$ and $\pi_2(S)$ share the same cardinality, it is enough to show that $T \subseteq \pi_2(S)$ to conclude that they coincide. Let us consider an element $\mathcal{P}' \in T$. By definition we have $\mathcal{P'}=\mathcal{P} \sqcup \lbrace \lbrace i, \overline{s} \rbrace \rbrace$ for some $i \in S \setminus \lbrace \overline{s} \rbrace$ and some $\mathcal{P} \in \pi_2(S \setminus \lbrace i, \overline{s} \rbrace)$. Every element in $\mathcal{P}'$ is non-empty since $\mathcal{P}$ is a partition (or either the empty set if $|S|=2$) and $\lbrace i, \overline{s} \rbrace$ is non-empty. Furthermore, the union of the elements in $\mathcal{P}'$ is $S$, since the union of those in $\mathcal{P}$ give $S \setminus \lbrace i, \overline{s} \rbrace$ by definition. Finally all the elements in $\mathcal{P}$ are pairwise disjoint (since $\mathcal{P}$ is a partition of $S \setminus \lbrace i, \overline{s} \rbrace$) and have also empty intersection with $\lbrace i, \overline{s} \rbrace$. Clearly, all elements of $\mathcal{P}'$ are subsets of $S$ with cardinality $2$ and therefore $\mathcal{P}' \in \pi_2(S)$.
\end{proof}

When $S=\lbrace s_1, s_2, s_3, \ldots, s_{2n} \rbrace \subseteq \mathbb{Z}$, $S \neq \emptyset$, we can always assume that its elements are labeled in increasing order, i.e. that $s_1<s_2<s_3<\ldots<s_{2n-1}<s_{2n}$. Then we can fix a canonical form for every perfect matching $\mathcal{P}$ for $S$. Indeed a perfect matching $\mathcal{P}$ can be regarded as a collection of $n$ blocks, which we label in increasing order in the following sense: $\mathcal{P}=\lbrace P_1, P_2, \ldots, P_n \rbrace$, where $P_k=\lbrace s_{k_1} <s_{k_2} \rbrace$ and $s_{1_2}<s_{2_2}<\ldots<s_{n_2}$. We will write $\mathcal{P}=P_1|P_2|\ldots|P_n$ to display this ordered presentation of a perfect matching. For example, if $S=\lbrace 2,3,5,8 \rbrace$, we have
\[\pi_2(S)=\lbrace  \lbrace 2,3 \rbrace|\lbrace 4,8 \rbrace ,  \lbrace 2,5 \rbrace|\lbrace 3,8 \rbrace,  \lbrace 3,5 \rbrace|\lbrace 2,8 \rbrace \rbrace .\]
Moreover, when $S=\lbrace s_1<s_2<\ldots<s_{2n} \rbrace \subseteq \mathbb{Z}$, a perfect matching $\mathcal{P}=P_1|P_2| \ldots|P_n \in \pi_2(S)$ is in correspondence with a permutation $\sigma_{\mathcal{P}}$ of the set $S$ via the following assignment
\begin{equation}\label{matchperm}
\mathcal{P}=P_1|P_2| \ldots|P_n \longmapsto \sigma_{\mathcal{P}}=\begin{pmatrix}
s_1 & s_2 & s_3 & s_4 & \ldots & s_{2n-1} & s_{2n}\\
s_{1_1} & s_{1_2} & s_{2_1} & s_{2_2} & \ldots & s_{n_1} & s_{n_2}
\end{pmatrix},
\end{equation}
i.e. $\sigma_{\mathcal{P}}(s_i)=s_{{\frac{i+1}{2}}_1}$ for every odd $i$ and $\sigma_{\mathcal{P}}(s_i)=s_{{\frac{i}{2}}_2}$ for every even $i$. This correspondence is well-defined (and injective) thanks to the fact that we have fixed a canonical form for each perfect matching by means of the total order on $\mathbb{Z}$. Another consequence of this fact is that we always have $s_{n_2}=s_{2n}$. Now, since each permutation has a sign, we can introduce the following definition.
\begin{definition}\label{sign}
Let $S=\lbrace s_1 < s_2 < \ldots < s_{2n} \rbrace \subseteq \mathbb{Z}$, $S \neq \emptyset$ and $\mathcal{P} \in \pi_2(S)$. We define the sign of the perfect matching $\mathcal{P}$ as the sign of the associated permutation $\sigma_{\mathcal{P}}$ described in \eqref{matchperm}:
\[\sgn\left(\mathcal{P}\right):=\sgn\left(\sigma_{\mathcal{P}}\right).\]
For $S=\emptyset$ we set $\sgn\left(\emptyset\right):=1$.
\end{definition}
\begin{proposition}\label{prop:matching}
Let $n \geq 1$ and $S=\lbrace s_1 < s_2 < \ldots < s_{2m} \rbrace \subseteq \lbrace 0,1, \ldots, n \rbrace$ with $|S|=2m \geq 0$. We adopt the notation of Definition~\ref{defEn}.
\begin{enumerate}[i)]
\item For every $\mathcal{P}=P_1|P_2|\ldots|P_m \in \pi_2(S)$
\[x_S=\sgn\left(\mathcal{P}\right)x_{P_1}x_{P_2}\ldots x_{P_m}.\]
\label{prop:bou}
\item \[\sgn\left(\mathcal{P}\right)=(-1)^{\sum_{k=1}^m S(P_k,P_k \sqcup \ldots \sqcup P_m)}.\] \label{cor:sgn}
\item \[\sum_{\mathcal{P} \in \pi_2(S)}\sgn\left(\mathcal{P}\right)=1.\]\label{prop:sgn}
\end{enumerate}
\end{proposition}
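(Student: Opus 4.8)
The plan is to prove the three assertions in sequence, using each as a stepping stone for the next. The central tool will be Proposition~\ref{prop:pmrecur}, which lets me reduce a perfect matching on $S$ to one on a smaller set, so each part naturally invites an induction on $m=|S|/2$.

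First I would establish part~\eqref{prop:bou}. The defining relations of $E(n)$ are $x_ix_j=-x_jx_i$, so reordering the product $x_{i_1}x_{i_2}\cdots x_{i_{2m}}$ (where $S=\{s_1<\cdots<s_{2m}\}$) into the grouped form $x_{P_1}x_{P_2}\cdots x_{P_m}$ is a permutation of the factors, and each transposition of adjacent generators contributes a factor $-1$. Hence $x_S=\sgn(\sigma)\,x_{P_1}\cdots x_{P_m}$ where $\sigma$ is precisely the permutation realizing the reordering. The key observation is that $\sigma$ is exactly $\sigma_{\mathcal{P}}$ from \eqref{matchperm}: the ordered matching presentation sends the $2k{-}1$-th and $2k$-th slots to the two elements of $P_k$. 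So part~\eqref{prop:bou} is really just the statement that the sign of an even reordering of anticommuting generators equals the sign of the associated permutation, which follows from \eqref{reorder} applied blockwise together with Definition~\ref{sign}. I expect this step to be essentially bookkeeping.

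Part~\eqref{cor:sgn} is a sign-computation identity. I would prove it by the recursion of Proposition~\ref{prop:pmrecur}, peeling off the block containing $s_1$ (take $\overline{s}=s_1$). Writing $\mathcal{P}=P_1|\cdots|P_m$ with $P_1\ni s_1$, repeated application of \eqref{reorder} gives $x_{P_1}\,x_{P_2\sqcup\cdots\sqcup P_m}=(-1)^{S(P_1,\,P_1\sqcup\cdots\sqcup P_m)}x_{P_1\sqcup\cdots\sqcup P_m}=(-1)^{S(P_1,S)}x_S$; iterating downward and comparing with part~\eqref{prop:bou} telescopes the exponents into $\sum_{k=1}^m S(P_k,P_k\sqcup\cdots\sqcup P_m)$, yielding $\sgn(\mathcal{P})=(-1)^{\sum_k S(P_k,\,P_k\sqcup\cdots\sqcup P_m)}$. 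The main care here is to track that each $S(P_k,\cdot)$ is computed relative to the union $P_k\sqcup\cdots\sqcup P_m$ of the remaining blocks, which matches the statement; this is where the ordered (canonical) form of the matching is doing the work.

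Part~\eqref{prop:sgn} is the substantive claim, and I expect the sign cancellation to be the main obstacle. I would argue by induction on $m$, with base case $m=0$ (empty set, $\sgn(\emptyset)=1$) or $m=1$ (a single pair, $\sgn=1$) trivial. For the inductive step, fix $\overline{s}=s_{2m}$, the largest element, and use Proposition~\ref{prop:pmrecur} to write
\[\sum_{\mathcal{P}\in\pi_2(S)}\sgn(\mathcal{P})=\sum_{i\in S\setminus\{\overline{s}\}}\ \sum_{\mathcal{Q}\in\pi_2(S\setminus\{i,\overline{s}\})}\sgn\bigl(\mathcal{Q}\sqcup\{\{i,\overline{s}\}\}\bigr).\]
The crux is to relate $\sgn(\mathcal{Q}\sqcup\{\{i,\overline{s}\}\})$ to $\sgn(\mathcal{Q})$: inserting the pair $\{i,\overline{s}\}$ (with $\overline{s}$ the global maximum, so this pair becomes the last block $P_m$) introduces a sign depending only on the position of $i$ among $s_1,\ldots,s_{2m-1}$. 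By part~\eqref{prop:bou}, $x_S=x_{S\setminus\{i,\overline{s}\}}$ multiplied past $x_ix_{\overline{s}}$, and moving $x_i$ to the front past the elements before it contributes $(-1)^{(\text{number of elements of }S\text{ below }i)}=(-1)^{i-1}$ in the relabeled index, so $\sgn(\mathcal{Q}\sqcup\{\{i,\overline{s}\}\})=(-1)^{p(i)}\sgn(\mathcal{Q})$ for an explicit parity $p(i)$ that alternates as $i$ ranges over the $2m-1$ candidates. By the inductive hypothesis each inner sum equals $1$, so the total becomes $\sum_{i}(-1)^{p(i)}$, an alternating sum over an odd number of terms whose values are $+1,-1,+1,\ldots,+1$; this telescopes to $1$. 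The delicate point is verifying that the parities $p(i)$ indeed alternate correctly so that the alternating sum over the $2m-1$ positions collapses to a single $+1$, and I would pin this down by computing $p(i)$ explicitly from the permutation \eqref{matchperm} (equivalently from the sign picked up in the reordering of anticommuting generators) rather than relying on a heuristic.
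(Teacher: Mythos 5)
Your proposal is correct and follows essentially the same route as the paper: part~\emph{i)} via the anticommutation/reordering sign (the paper simply cites Bourbaki here), part~\emph{ii)} by telescoping \eqref{reorder} against part~\emph{i)}, and part~\emph{iii)} by induction using Proposition~\ref{prop:pmrecur} with $\overline{s}$ the maximum, where the relative sign of appending the block $\{t,\overline{s}\}$ is $(-1)^{S(\{t\},T)}$ and the resulting alternating sum over an odd number of positions collapses to $1$. No substantive differences.
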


\begin{proof}
\begin{enumerate}[\it i)]
\item See \cite[Lemma 3, p. 473]{bou}.
\item \[\sgn\left(\mathcal{P}\right)x_{P_1}x_{P_2}\ldots x_{P_m}=x_S\overset{\eqref{reorder}}{=}(-1)^{\sum_{k=1}^m S(P_k,P_k \sqcup \ldots \sqcup P_m)}x_{P_1}x_{P_2}\ldots x_{P_m}.\]
\item The proof is by induction on $m$. If $m=0$, then $S=\emptyset$ and we have $\sum_{\mathcal{P} \in \pi_2(S)}\sgn\left(\mathcal{P}\right)=\sgn\left(\emptyset\right)=1$. If $m=1$, then we have $S=\lbrace s_1<s_2 \rbrace=P_1$ and $\mathcal{P}=\lbrace P_1 \rbrace$. Clearly $\sigma_{\mathcal{P}}=\textrm{Id}_S$ and thus $\sum_{\mathcal{P} \in \pi_2(S)}\sgn\left(\mathcal{P}\right)=\sgn\left(\textrm{Id}_S\right)=1$.

Now suppose our statement holds true for some natural $m \geq 1$ such that $2m \leq n$ and consider $T=\lbrace s_1<s_2<\ldots<s_{2m}<s_{2m+1}<s_{2m+2} \rbrace \subseteq \lbrace 0,1, \ldots, n \rbrace$. For every $\mathcal{P}' \in \pi_2(T)$ we can write $\mathcal{P}'=P_1|P_2|\ldots|P_m|P_{m+1}=\mathcal{P}|P_{m+1}$ where $P_{m+1}=\lbrace t<s_{2m+2} \rbrace$ for some $t \in T\setminus \lbrace s_{2m+2} \rbrace$ and some $\mathcal{P} \in \pi_2(T\setminus P_{m+1})$, thanks to Proposition~\ref{prop:pmrecur}.
We have
\begin{align*}
\sgn(\mathcal{P}')x_T\overset{\ref{prop:bou})}&{=}x_{P_1}x_{P_2}\ldots x_{P_m}x_{P_{m+1}}\overset{\ref{prop:bou})}{=}\sgn(\mathcal{P})x_{T \setminus P_{m+1}}x_{P_{m+1}}=\sgn(\mathcal{P})x_{P_{m+1}}x_{T \setminus P_{m+1}}\\\overset{\eqref{reorder}}&{=}\sgn(\mathcal{P})(-1)^{S(P_{m+1},T)}x_T=\sgn(\mathcal{P})(-1)^{S(\lbrace t \rbrace,T)}x_T,\\
\end{align*}
that is $\sgn(\mathcal{P}|P_{m+1})=\sgn(\mathcal{P}')=\sgn(\mathcal{P})(-1)^{S(\lbrace t \rbrace,T)}$.
Then
\begin{align*}
\sum_{\mathcal{P'} \in \pi_2(T)}\sgn\left(\mathcal{P}'\right)\overset{Prop.~\ref{prop:pmrecur}}&{=}\sum_{t \in T \setminus \lbrace s_{2m+2} \rbrace}\sum_{\mathcal{P} \in \pi_2(T \setminus P_{m+1})}\sgn\left(\mathcal{P}|P_{m+1}\right)\\
&=\sum_{t \in T \setminus \lbrace s_{2m+2} \rbrace}\sum_{\mathcal{P} \in \pi_2(T \setminus P_{m+1})}\sgn(\mathcal{P})(-1)^{S(\lbrace t \rbrace,T)}\\
\overset{Ind. \ hyp.}&{=}\sum_{t \in T \setminus \lbrace s_{2m+2} \rbrace}(-1)^{S(\lbrace t \rbrace,T)}=\sum_{k=1}^{2m+1}(-1)^{k-1}=\sum_{k=0}^{2m}(-1)^{k}=1.
\end{align*}
\end{enumerate}
\end{proof} 

\subsection{Permuting factors in Clifford algebras}

Clifford algebras are non-commutative. Nonetheless we will see that in these algebras the order of factors in a product is somewhat controlled by the function $S(F, P)$ introduced after Definition~\ref{defEn}, similarly to what happens for the Hopf algebras $E(n)$ (see \eqref{reorder}). The following lemma displays a list of properties satisfied by this function.
\begin{lemma}\label{lemmacrucial}
Let $F \subseteq P \subseteq \lbrace 1, \ldots, n \rbrace$.
\begin{enumerate}[i)]
\item For every $i \in P$, we have
\[S(P \setminus \lbrace i \rbrace,P)=|P|-1-S(\lbrace i \rbrace, P)=\textrm{ number of elements of } P \textrm{ that are bigger than } i.
\]\label{lemmaPi}
\item \[\sum_{l \in F} S(\lbrace l \rbrace, P)=S(F,P)+\frac{|F|(|F|-1)}{2}.\] \label{lemmaSF1}
\item \[S(F,P)+S(P \setminus F,P)=|F|(|P|-|F|).\]\label{corcompl}
\item For every $F' \subseteq P \setminus F$ we have
\[\sum_{j \in F'} S(\lbrace j \rbrace, F \sqcup \lbrace j \rbrace)=S(F',P)-S(F',P\setminus F).\] \label{LemmaSF2}
\item For every $F' \subseteq F$ we have
\begin{align*}
S(F \setminus F', P \setminus F')=S(F,P)-\sum_{l \in F'}\left[S(\lbrace l \rbrace, P)-S(\lbrace l \rbrace, F)\right]=S(F,P)-S(F',P)+S(F',F).
\end{align*}\label{lemmaFPF}
\item For every $i \in \lbrace 1, \ldots, n \rbrace$ such that $i>j$ for every $j \in P$ we have
\[S(F \sqcup \lbrace i \rbrace,P \sqcup \lbrace i \rbrace)=S(F, P)+|P|-|F|.\]\label{cor1}
\end{enumerate}
\end{lemma}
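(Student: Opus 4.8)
The cleanest route is to first reinterpret $S(F,P)$ combinatorially, and then read off every part of the lemma as an elementary count. Writing $P=\{i_1<\cdots<i_s\}$ and letting $j_1<\cdots<j_r$ be the positions occupied by the elements of $F$, the definition gives $S(F,P)=\sum_{k=1}^r j_k-\frac{r(r+1)}{2}=\sum_{k=1}^r (j_k-k)$. Since the $k$-th smallest element of $F$ sits at position $j_k$ in $P$ and has exactly $k-1$ smaller elements of $F$ beneath it, the quantity $j_k-k=(j_k-1)-(k-1)$ counts precisely the elements of $P\setminus F$ lying below it. Summing over $k$ yields the key identity
\[S(F,P)=\#\{(a,b)\in F\times(P\setminus F):a>b\},\]
so that $S(F,P)$ is the number of \emph{crossings} between $F$ and its complement in $P$. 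I would prove this reinterpretation once and invoke it throughout.

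With this in hand, part i) is immediate: $S(\{i\},P)$ counts the elements of $P$ below $i$, so $|P|-1-S(\{i\},P)$ counts those above $i$; and since the complement of $P\setminus\{i\}$ in $P$ is $\{i\}$, the crossing count $S(P\setminus\{i\},P)$ is exactly the number of $a\in P$ with $a>i$. For ii), splitting the elements $b<l$ according to whether they lie in $F$ or in $P\setminus F$ gives $\sum_{l\in F}S(\{l\},P)=\binom{|F|}{2}+S(F,P)$, which is the claim. For iii), each pair $(x,y)\in F\times(P\setminus F)$ contributes one crossing to exactly one of $S(F,P)$ or $S(P\setminus F,P)$ according to whether $x>y$ or $y>x$, so together they count all $|F|(|P|-|F|)$ such pairs.

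The remaining parts combine the same principle with a bookkeeping of disjoint unions. The second equality in v) is just ii) applied to the pairs $(F',P)$ and $(F',F)$, giving $\sum_{l\in F'}\bigl[S(\{l\},P)-S(\{l\},F)\bigr]=S(F',P)-S(F',F)$. For the substantive crossing identities I would exploit the decompositions $P\setminus F'=(P\setminus F)\sqcup(F\setminus F')$ when $F'\subseteq F$, and $P\setminus F'=\bigl(P\setminus(F\sqcup F')\bigr)\sqcup F$ when $F'\subseteq P\setminus F$: restricting the crossing count to a single block of such a partition isolates exactly the difference of $S$-values on the right-hand sides of iv) and v). Finally vi) follows because the adjoined maximal element $i$ crosses every element of $P\setminus F$ and none of $F$, contributing $|P\setminus F|=|P|-|F|$ on top of $S(F,P)$. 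I expect the only delicate point to be verifying that these set-partitions are genuinely disjoint and exhaust the relevant index sets (in particular that $F'\subseteq P\setminus F$ forces $F\cap F'=\emptyset$, so that $S(F',P)-S(F',P\setminus F)$ really records precisely the crossings of $F'$ with $F$); once this is checked, each assertion reduces to a one-line count.
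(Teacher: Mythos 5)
Your proposal is correct, and it takes a genuinely different route from the paper. The paper proves each item by direct manipulation of the position indices $j_k$ in the definition $S(F,P)=\sum_k j_k-\frac{r(r+1)}{2}$: parts i), ii) and iv) are explicit index computations, iii) follows from ii) by algebra, v) is proved by induction on $|F'|$ (with a positional computation for the base case $F'=\{i\}$), and vi) is deduced from v) by specialization. You instead establish once the combinatorial identity $S(F,P)=\#\{(a,b)\in F\times(P\setminus F):a>b\}$ (which is correct: the $k$-th smallest element of $F$ sits at position $j_k$ and has exactly $j_k-k$ elements of $P\setminus F$ below it) and then read off every part by partitioning the relevant set of pairs \--- in particular your v) needs no induction, only the decompositions $F=(F\setminus F')\sqcup F'$ and $P\setminus F'=(P\setminus F)\sqcup(F\setminus F')$. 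What your approach buys is uniformity and transparency: all six statements become instances of one counting principle, iii) and vi) become one-line observations, and the sign bookkeeping that motivates the lemma (reordering factors in $E(n)$ and in Clifford algebras) is made visible as an inversion count. What the paper's approach buys is that each item is verified directly from the stated formula without introducing an auxiliary characterization, at the cost of a somewhat opaque induction in v). The only points requiring care in your argument \--- that $F'\subseteq P\setminus F$ forces $F\cap F'=\emptyset$ in iv), and that the complement of $F\setminus F'$ inside $P\setminus F'$ is exactly $P\setminus F$ in v) \--- are ones you explicitly flag, and both check out.
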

\begin{proof}
Let $P=\lbrace i_1 < i_2 < \ldots < i_{|P|} \rbrace$ for every occurence of $P$ throughout the proof.
\begin{enumerate}[\it i)]
\item Let $i=i_t$. Then
\[S(P \setminus \lbrace i \rbrace,P)=1+\ldots+|P|-t-\frac{|P|(|P|-1)}{2}=|P|-t=|P|-S(\lbrace i \rbrace,P)-1.\]
The second equality follows from $S(P \setminus \lbrace i \rbrace,P)=|P|-t$.
\item Let $F=\lbrace i_{j_1}, \ldots, i_{j_r}\rbrace$. Then
\[\sum_{l \in F} S(\lbrace l \rbrace, P)=\sum_{k=1}^r (j_k-1)=S(F,P)+\frac{r(r+1)}{2}-r=S(F,P)+\frac{r(r-1)}{2}.\]
\item \begin{align*}
S(F,P)+S(P \setminus F,P)\overset{\ref{lemmaSF1})}&{=}\sum_{i \in P} S(\lbrace i \rbrace, P)-\frac{|F|^2-|F|}{2}-\frac{|P \setminus F|(|P \setminus F|-1)}{2}\\
\overset{\ref{lemmaSF1})}&{=}S(P, P)+\frac{|P|^2-|P|}{2}+|F|(|P|-|F|)-\frac{|P|^2-|P|}{2}\\
&=|F|(|P|-|F|).
\end{align*}
\item Let $F'=\lbrace i_{j_1}, \ldots, i_{j_r}\rbrace$. By definition we have
$S(F',P)=j_1+j_2+\ldots+j_r-\frac{r(r+1)}{2}$.
Moreover
\[S(F',P \setminus F)=(j_1-n(j_1))+(j_2-n(j_2))+\ldots+(j_r-n(j_r))-\frac{r(r+1)}{2},\]
where $n(j_t)=$ number of elements in $F$ that are smaller than $i_{j_t}$.
Therefore
\begin{align*}
S(F',P)-S(F',P \setminus F)&=\sum_{t=1}^r n(j_t)\overset{\ref{lemmaPi})}{=}\sum_{t=1}^r S(\lbrace i_{j_t} \rbrace, F \sqcup \lbrace i_{j_t} \rbrace)=\sum_{l \in F'} S(\lbrace l \rbrace, F \sqcup \lbrace l \rbrace).
\end{align*}
\item The proof of the first part is by induction on $|F'|$. If $F'=\emptyset$ the statement is trivially true. Suppose $F'=\lbrace i \rbrace$. Then we must prove that
\[S(F \setminus \lbrace i \rbrace, P \setminus \lbrace i \rbrace)=S(F,P)-S(\lbrace i \rbrace, P)+S(\lbrace i \rbrace, F).\]
Remember that $P=\lbrace i_1 < i_2 < \ldots < i_{|P|} \rbrace$ and let $F=\lbrace i_{j_1}, i_{j_2}, \ldots, i_{j_r} \rbrace$, $i=i_{j_t}$.
Then every element $i_k \in P$ has the same index $k$ when regarded in $P \setminus \lbrace i \rbrace$ if $k<j_t$, but changes index to $k-1$ if $k>j_t$.
Therefore, since $S(F,P)=j_1+j_2+ \ldots +j_r-\frac{r(r+1)}{2}$,
then
\begin{align*}
S(F\setminus \lbrace i_{j_t} \rbrace, P \setminus \lbrace i_{j_t} \rbrace)&=j_1+\ldots+j_{t-1}+(j_{t+1}-1)+\ldots+(j_r-1)-\frac{r(r-1)}{2}\\
&=S(F,P)-j_t+t=S(F,P)-S(\lbrace i_{j_t} \rbrace,P)-1+t\\
&=S(F,P)-S(\lbrace i_{j_t} \rbrace,P)+S(\lbrace i_{j_t} \rbrace, F).
\end{align*}
Now suppose the statement holds for every $F' \subseteq F$ of a fixed cardinality $1 \leq m < |F|$ and consider a subset $F'' \subseteq F$ of cardinality $|F''|=m+1$. Let $F''=F' \sqcup \lbrace j'' \rbrace$ so that $|F'|=m$. We can assume, withouth losing of generality, that $j''>l$ for every $l \in F'$.
Then
\begin{align*}
S(F \setminus F'', P \setminus F'')&=S((F \setminus F')\setminus \lbrace j'' \rbrace, (P \setminus F') \setminus \lbrace j'' \rbrace)\\
\overset{Base \ case}&{=}S(F \setminus F',P \setminus F')-S(\lbrace j'' \rbrace, P \setminus F')+S(\lbrace j'' \rbrace, F \setminus F')\\
\overset{Ind. \ hyp.}&{=}S(F,P)-\sum_{l \in F'}\left[S(\lbrace l \rbrace, P)-S(\lbrace l \rbrace, F)\right]-S(\lbrace j'' \rbrace, P \setminus F')+\\
&+S(\lbrace j'' \rbrace, F \setminus F').
\end{align*}
Since $j''>l$ for every $l \in F'$, it follows that $S(\lbrace j'' \rbrace, P \setminus F')=S(\lbrace j'' \rbrace, P)-|F'|$ and $S(\lbrace j'' \rbrace, F \setminus F')=S(\lbrace j'' \rbrace, F)-|F'|$.
Thus 
\begin{align*}
S(F \setminus F'', P \setminus F'')&=S(F,P)-\sum_{l \in F'}\left[S(\lbrace l \rbrace, P)-S(\lbrace l \rbrace, F)\right]-S(\lbrace j'' \rbrace, P)+S(\lbrace j'' \rbrace, F)\\
&=S(F,P)-\sum_{l \in F''}\left[S(\lbrace l \rbrace, P)+S(\lbrace l \rbrace, F)\right].
\end{align*}
The second part of the equality follows by \textit{\ref{lemmaSF1})}.
\item Substitute $F$ with $F \sqcup \lbrace i \rbrace$, $P$ with $P \sqcup \lbrace i \rbrace$ and put $F'=\lbrace i \rbrace$ in \textit{\ref{lemmaFPF})}.
\end{enumerate}
\end{proof}

Let $A=Cl(\alpha,\beta_i,\gamma_i, \lambda_{ij})$ be a Clifford algebra. Remember that its generators are $G,X_1, \ldots, X_n$ and that a $\Bbbk$-basis for this algebra can be written as $\lbrace X_P \rbrace$ where  $X_P=X_{i_1} \cdots X_{i_s}$ with $P=\lbrace i_1 < i_2 < \ldots < i_s \rbrace \subseteq \lbrace 0,1, \ldots, n \rbrace$, with the convention that $X_0=G$. We also recall that $X_i^2=\beta_i$ and $X_iX_j+X_jX_i=\lambda_{ij}$ for every $i,j=1, \ldots, n$, $i < j$, with the convention that $\beta_0=\alpha$, $\lambda_{0i}=\gamma_i$ for every $i=1, \ldots, n$.

\begin{notation}
Some of the statements and proofs in the sequel may contain the following abuse of notation. We have defined $\lambda_{i_1i_2}$ only when $i_1<i_2$, but whenever a $\lambda_{j_1j_2}$ with $j_1>j_2$ is encountered it should always be interpreted as $\lambda_{j_2j_1}$.
\end{notation}
\begin{lemma} 
Let $P \subseteq \lbrace 0, 1, \ldots, n \rbrace$. We have
\item \begin{equation}\label{XRXj}
X_PX_j=\sum_{i \in P \setminus \lbrace j \rbrace} (-1)^{S(P \setminus \lbrace i,j \rbrace,P \setminus \lbrace j \rbrace)}\lambda_{ij} X_{P \setminus \lbrace i \rbrace} +(-1)^{|P|-\delta_{j \in P}}X_jX_P
\end{equation}
for every $j=0, \ldots, n$, and
\begin{equation}\label{betacoroll}
X_PX_j=\sum_{\underset{i \in P}{i>j}} (-1)^{S(P \setminus \lbrace i \rbrace,P)}\lambda_{ij} X_{P \setminus \lbrace i \rbrace} +(-1)^{S(P \setminus \lbrace j \rbrace,P)}\beta_j X_{P \setminus \lbrace j \rbrace}
\end{equation}
for every $j \in P$. 
\end{lemma}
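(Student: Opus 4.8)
The plan is to derive both identities directly from the defining relations of the Clifford algebra, namely $X_aX_b+X_bX_a=\lambda_{ab}$ for $a\neq b$ (with the conventions $X_0=G$, $\lambda_{0i}=\gamma_i$ and the reading $\lambda_{ba}:=\lambda_{ab}$) and $X_a^2=\beta_a$ (with $\beta_0=\alpha$), together with the sign bookkeeping supplied by Lemma~\ref{lemmacrucial}~\ref{lemmaPi}), which identifies $S(P\setminus\lbrace i\rbrace,P)$ with the number of elements of $P$ exceeding $i$. Because these conventions make $G$ behave like any other generator, all indices in $\lbrace 0,1,\ldots,n\rbrace$ can be treated on equal footing, and no separate argument for $G$ is needed.

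For \eqref{XRXj} I would argue by induction on $|P|$, splitting off the \emph{smallest} generator. The case $P=\emptyset$ is immediate. For the inductive step write $P=\lbrace i_1\rbrace\sqcup P'$ with $i_1=\min P$, so that $X_PX_j=X_{i_1}(X_{P'}X_j)$, and apply the inductive hypothesis to $X_{P'}X_j$. Reinserting $X_{i_1}$ on the left is harmless on the correction terms, since $i_1$ lies below every element of $P'$ and hence $X_{i_1}X_{P'\setminus\lbrace i\rbrace}=X_{P\setminus\lbrace i\rbrace}$; on the leftover term $X_{i_1}X_jX_{P'}$ I use $X_{i_1}X_j=\lambda_{i_1 j}-X_jX_{i_1}$ when $j\neq i_1$ (this produces both the missing $i=i_1$ summand and the main term $(-1)^{|P|-\delta_{j\in P}}X_jX_P$), and $X_{i_1}X_j=\beta_j$ when $j=i_1$ (in which case $X_jX_P$ itself collapses to $\beta_jX_{P\setminus\lbrace j\rbrace}$, so the stated formula still reads correctly). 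The only point to check is that the exponents agree modulo $2$, and this is exactly the role of Lemma~\ref{lemmacrucial}~\ref{lemmaPi}): it yields $S(P\setminus\lbrace i,j\rbrace,P\setminus\lbrace j\rbrace)=S(P'\setminus\lbrace i,j\rbrace,P'\setminus\lbrace j\rbrace)$ for $i\in P'$ (the two counts coincide because $i_1<i$) and $S(P\setminus\lbrace i_1,j\rbrace,P\setminus\lbrace j\rbrace)=|P|-1-\delta_{j\in P}$, which matches the parity $(-1)^{|P'|-\delta_{j\in P'}}$ emerging from the induction.

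For \eqref{betacoroll}, where now $j\in P$, I would run a parallel induction but split off the \emph{largest} generator $X_{i_s}$, inducting on the number of elements of $P$ above $j$. The base case $j=\max P$ is immediate, since $X_PX_j=X_{P\setminus\lbrace j\rbrace}X_j^2=\beta_jX_{P\setminus\lbrace j\rbrace}$, consistent with the empty sum and $S(P\setminus\lbrace j\rbrace,P)=0$. For the step, $X_{i_s}X_j=\lambda_{i_s j}-X_jX_{i_s}$ peels off the top summand $\lambda_{i_s j}X_{P\setminus\lbrace i_s\rbrace}$ (correct since $S(P\setminus\lbrace i_s\rbrace,P)=0$) and reduces $X_PX_j$ to $-(X_{P\setminus\lbrace i_s\rbrace}X_j)X_{i_s}$, to which the inductive hypothesis applies; reabsorbing $X_{i_s}$ on the right turns each $X_{(P\setminus\lbrace i_s\rbrace)\setminus\lbrace i\rbrace}$ back into $X_{P\setminus\lbrace i\rbrace}$, while the identity $S(P\setminus\lbrace i,i_s\rbrace,P\setminus\lbrace i_s\rbrace)=S(P\setminus\lbrace i\rbrace,P)-1$ (again Lemma~\ref{lemmacrucial}~\ref{lemmaPi})) exactly absorbs the leading minus sign, and likewise for the $\beta_j$ term. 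Alternatively, \eqref{betacoroll} can be obtained from \eqref{XRXj} by straightening the non-basis expression $X_jX_P$ and verifying that the correction terms indexed by $i<j$ cancel, leaving only the $i>j$ contributions and the $\beta_j$ term.

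The only genuine difficulty is the sign accounting: each inductive step changes the ambient index set (it shrinks from $P$ to $P'$, or from $P$ to $P\setminus\lbrace i_s\rbrace$), so the various exponents $S(-,-)$ are computed relative to different sets and must be reconciled. Lemma~\ref{lemmacrucial}~\ref{lemmaPi}) reduces every such exponent to a plain count of indices lying above a given element, after which all the parity comparisons become elementary.
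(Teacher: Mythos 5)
Your argument is correct, and both identities come out with the right signs; the differences from the paper's proof are in the bookkeeping rather than in substance. For \eqref{XRXj} the paper also inducts on $|P|$, but it peels off the \emph{largest} generator, writing $X_{P'}=X_PX_{i_{m+1}}$ and multiplying the inductive identity on the \emph{right} by $X_{i_{m+1}}$ (resp.\ by $X_j$ when $j=i_{m+1}$), reconciling exponents via Lemma~\ref{lemmacrucial}~\emph{\ref{cor1})}; you peel off the smallest generator and multiply on the left, reconciling exponents via Lemma~\ref{lemmacrucial}~\emph{\ref{lemmaPi})}. Both routes are equally efficient, and your observation that every exponent $S(-,-)$ reduces to a count of indices above a given element is exactly the right way to discharge the parity checks (in particular $S(P\setminus\lbrace i_1,j\rbrace,P\setminus\lbrace j\rbrace)=|P|-1-\delta_{j\in P}$ and the invariance of $S(P\setminus\lbrace i,j\rbrace,P\setminus\lbrace j\rbrace)$ under adjoining the minimum are both instances of it). For \eqref{betacoroll} the divergence is larger: the paper dispatches it in one line by writing $X_PX_j=X_{P_1}X_jX_{P_2}X_j$ and applying \eqref{XRXj} to $X_{P_2}X_j$, whereas you run a second induction on the number of elements of $P$ above $j$, peeling off $\max P$; your version is longer but self-contained and makes the cancellation of the $i<j$ terms automatic rather than something to be verified, and your closing remark correctly identifies the paper's shortcut as the alternative. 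No gaps.
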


\begin{proof}
We prove \eqref{XRXj} by induction on $|P|$. This equality is trivially true when $P=\emptyset$, whereas, if we take $P=\lbrace l \rbrace$, we get
\[X_lX_j=\sum_{i \in \lbrace l \rbrace \setminus \lbrace j \rbrace} (-1)^{S(\lbrace l \rbrace \setminus \lbrace i,j \rbrace,\lbrace l \rbrace \setminus \lbrace j \rbrace)}\lambda_{ij} X_{\lbrace l \rbrace \setminus \lbrace i \rbrace} +(-1)^{1-\delta_{jl}}X_jX_l.\]
Hence we find $X_lX_l=X_lX_l$ if $j=l$, and $X_lX_j=\lambda_{lj} -X_jX_l$ if $j \neq l$, which are both clearly true by definition. Now suppose \eqref{XRXj} holds true for every $P \subseteq \lbrace 0, 1, \ldots, n \rbrace$ with cardinality $|P|=m \geq 1$. Consider $P'=\lbrace i_1 < i_2 < \ldots <i_m < i_{m+1} \rbrace\subseteq \lbrace 0, 1, \ldots, n \rbrace$ with cardinality $m+1$ and split it as $P'=P \sqcup \lbrace i_{m+1} \rbrace$, where $P=\lbrace i_1 < i_2 < \ldots <i_m \rbrace$. Clearly $X_{P'}=X_PX_{i_{m+1}}$. 
If we take $j=i_{m+1}$, then $j \notin P$ and
\begin{align*}
X_{P'}X_j&=X_PX_jX_j\overset{Ind. \ hyp.}{=}\left( \sum_{i \in P} (-1)^{S(P \setminus \lbrace i \rbrace,P)}\lambda_{ij} X_{P \setminus \lbrace i \rbrace} +(-1)^{|P|}X_jX_P\right)X_j\\
&= \sum_{i \in P' \setminus \lbrace j \rbrace } (-1)^{S(P' \setminus \lbrace i,j \rbrace,P' \setminus \lbrace j \rbrace)}\lambda_{ij} X_{P' \setminus \lbrace i \rbrace} +(-1)^{|P'|-1}X_jX_{P'}\\
&= \sum_{i \in P' \setminus \lbrace j \rbrace } (-1)^{S(P' \setminus \lbrace i,j \rbrace,P' \setminus \lbrace j \rbrace)}\lambda_{ij} X_{P' \setminus \lbrace i \rbrace} +(-1)^{|P'|-\delta_{j \in P'}}X_jX_{P'}.\\
\end{align*}
On the other hand, if $j \neq i_{m+1}$ then 
\begin{align*}
X_{P'}X_j&=X_PX_{i_{m+1}}X_j=X_P(\lambda_{i_{m+1}j}-X_jX_{i_{m+1}})=\lambda_{i_{m+1}j}X_P-X_PX_jX_{i_{m+1}}\\
\overset{Ind.\ hyp.}&{=}\lambda_{i_{m+1}j}X_P-\left(\sum_{i \in P \setminus \lbrace j \rbrace} (-1)^{S(P \setminus \lbrace i,j \rbrace,P \setminus \lbrace j \rbrace)}\lambda_{ij} X_{P \setminus \lbrace i \rbrace} +(-1)^{|P|-\delta_{j \in P}}X_jX_P \right)X_{i_{m+1}}\\
&=\lambda_{i_{m+1}j}X_{P'\setminus \lbrace i_{m+1} \rbrace}+\sum_{i \in P \setminus \lbrace j \rbrace} (-1)^{S(P \setminus \lbrace i,j \rbrace,P \setminus \lbrace j \rbrace)+1}\lambda_{ij} X_{P' \setminus \lbrace i \rbrace} +(-1)^{|P'|-\delta_{j \in P'}}X_jX_{P'}\\
\overset{Lem.~\ref{lemmacrucial}, \ \ref{cor1})}&{=}\sum_{i \in P' \setminus \lbrace j \rbrace} (-1)^{S(P' \setminus \lbrace i,j \rbrace,P' \setminus \lbrace j \rbrace)}\lambda_{ij} X_{P' \setminus \lbrace i \rbrace} +(-1)^{|P'|-\delta_{j \in P'}}X_jX_{P'}.
\end{align*}
To prove \eqref{betacoroll} write $X_PX_j=X_{P_1}X_jX_{P_2}X_j$ and apply \eqref{XRXj} on $X_{P_2}X_j$.
\end{proof}

\subsection{Properties of the derivations associated to the canonical coaction}
Remember that the canonical $E(n)$-coaction on a Clifford algebra $A=Cl(\alpha,\beta_i,\gamma_i, \lambda_{ij})$ is associated to a $n+1$-uple of maps $(\sigma, d_1, \ldots, d_n)$, where $\sigma$ is the main involution of $A$ and each $\sigma$-derivation $d_i$ is determined by $d_i(X_j)=-\delta_{ij}$ for every $j=0,1,\ldots, n$. 

\begin{lemma}\label{lemmader}
If $d_i$ satisfy $d_i(X_j)=-\delta_{ij}$ for every $j=0,1,\ldots, n$, then 
\[d_i(X_R)=\delta_{i \in R}(-1)^{S(\lbrace i \rbrace, R)+1}X_{R \setminus \lbrace i \rbrace}\]
for every $R \subseteq \lbrace 0, 1, \ldots, n \rbrace$.
\end{lemma}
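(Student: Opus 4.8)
The plan is to prove the identity by induction on the cardinality $|R|$, exploiting the skew-Leibniz rule $d_i(ab)=d_i(a)b+\sigma(a)d_i(b)$ satisfied by the $\sigma$-derivation $d_i$, together with the elementary observation that the main involution $\sigma$ negates every generator, so that $\sigma(X_Q)=(-1)^{|Q|}X_Q$ for every $Q \subseteq \lbrace 0,1,\ldots,n\rbrace$ (each $X_k$ lies in the odd part of the $\mathbb{Z}_2$-graded algebra $A$, and $\sigma$ is an algebra automorphism).

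First I would settle the base cases. For $R=\emptyset$ we have $X_\emptyset=1$, and applying $d_i$ to $1=1\cdot 1$ gives $d_i(1)=d_i(1)+\sigma(1)d_i(1)=2d_i(1)$, whence $d_i(1)=0$; this agrees with the right-hand side since $\delta_{i\in\emptyset}=0$. For a singleton $R=\lbrace j\rbrace$ the claimed formula reads $\delta_{ij}(-1)^{S(\lbrace i\rbrace,\lbrace i\rbrace)+1}X_\emptyset$, and since $S(\lbrace i\rbrace,\lbrace i\rbrace)=0$ this is exactly $-\delta_{ij}=d_i(X_j)$.

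For the inductive step I would write $R=R'\sqcup\lbrace i_s\rbrace$ with $i_s=\max R$ and $R'=R\setminus\lbrace i_s\rbrace$, so that $X_R=X_{R'}X_{i_s}$, and apply the skew-Leibniz rule:
\[
d_i(X_R)=d_i(X_{R'})X_{i_s}+\sigma(X_{R'})d_i(X_{i_s})=d_i(X_{R'})X_{i_s}+(-1)^{|R'|}X_{R'}\,(-\delta_{i,i_s}).
\]
The argument then splits into three cases according to the position of $i$. If $i=i_s$, then $i\notin R'$, so $d_i(X_{R'})=0$ by the inductive hypothesis and only the second term survives, yielding $(-1)^{|R'|+1}X_{R\setminus\lbrace i\rbrace}$; since $i$ occupies the last position of $R$ we have $S(\lbrace i\rbrace,R)=|R'|$, which is precisely the claimed exponent. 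If $i\in R'$ (so $i<i_s$), then $d_i(X_{i_s})=0$ and the inductive hypothesis gives $d_i(X_{R'})=(-1)^{S(\lbrace i\rbrace,R')+1}X_{R'\setminus\lbrace i\rbrace}$; adjoining the maximal element $i_s$ does not alter the position of $i$, so $S(\lbrace i\rbrace,R')=S(\lbrace i\rbrace,R)$, and $X_{R'\setminus\lbrace i\rbrace}X_{i_s}=X_{R\setminus\lbrace i\rbrace}$ because $i_s$ is larger than every remaining index. Finally, if $i\notin R$ both terms vanish, matching $\delta_{i\in R}=0$.

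The computation is essentially routine; the only point requiring care is the sign bookkeeping, namely keeping track of the factor $(-1)^{|R'|}$ produced by $\sigma$ and verifying that $S(\lbrace i\rbrace,R)$ equals the position of $i$ in $R$ minus one, so that the exponents line up across the three cases. Once one observes that splitting off the maximum leaves the positions (hence the values $S(\lbrace i\rbrace,\cdot)$) of all smaller indices unchanged, the identity follows at once. Alternatively, one could give a non-inductive proof by invoking the iterated Leibniz rule $d_i(X_{i_1}\cdots X_{i_s})=\sum_{t}\sigma(X_{i_1})\cdots\sigma(X_{i_{t-1}})\,d_i(X_{i_t})\,X_{i_{t+1}}\cdots X_{i_s}$ and noting that only the single term with $i_t=i$ survives, after which the sign $(-1)^{t}=(-1)^{S(\lbrace i\rbrace,R)+1}$ comes out of the $t-1$ negations contributed by $\sigma$ on the prefix.
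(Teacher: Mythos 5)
Your proof is correct and follows essentially the same route as the paper's: induction on $|R|$, splitting off the maximal element so that $X_R=X_{R\setminus\lbrace i_s\rbrace}X_{i_s}$, applying the skew-Leibniz rule with $\sigma(X_Q)=(-1)^{|Q|}X_Q$, and checking the sign via $S(\lbrace i\rbrace,R)$. The paper merely compresses your three cases into two by carrying the factor $\delta_{i\in R}$ through the computation.
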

\begin{proof} The proof is by induction on $|R|$. We obviously have $d_i(1)=0=\delta_{i \in \emptyset}$ and $d_i(X_j)=-\delta_{ij}$ for every $j =0,1, \ldots, n$.
Now suppose 
\[d_i(X_R)=\delta_{i \in R}(-1)^{S(\lbrace i \rbrace, R)+1}X_{R \setminus \lbrace i \rbrace }\]
for all subsets $R$ of $ \lbrace 1, \ldots, n \rbrace$ with $|P|=r \geq 1$. Consider $R' \subseteq \lbrace 1, \ldots, n \rbrace$ with $|R'|=r+1$. Then we can write $R'=\lbrace i_1 < i_2 < \ldots < i_r<k \rbrace=R \sqcup \lbrace k \rbrace$ for $R=\lbrace i_1 < i_2 < \ldots < i_r \rbrace$. We have
\begin{align*}
d_i(X_{R'})&= \quad d_i(X_R)X_k+\sigma(X_R)d_i(X_k)\overset{Ind. \ hyp.}{=} \delta_{i \in R}(-1)^{S(\lbrace i \rbrace, R)+1}X_{R \setminus \lbrace i \rbrace }X_k+(-1)^{|R|}X_R(-\delta_{ik}).
\end{align*}
If $k=i$, then $i \notin R$ and we find $d_i(X_{R'})=(-1)^{|R|+1}X_R=\delta_{i \in R'}(-1)^{S(\lbrace i \rbrace, R')+1}X_{R' \setminus \lbrace i \rbrace }$. On the other hand, when $k \neq i$ we find
\[d_i(X_{R'})= \delta_{i \in R}(-1)^{S(\lbrace i \rbrace, R)+1}X_{R \setminus \lbrace i \rbrace }X_k=\delta_{i \in R'}(-1)^{S(\lbrace i \rbrace, R')+1}X_{R' \setminus \lbrace i \rbrace }.\]
\end{proof}

\begin{lemma}\label{lemmader2}
If each $d_i$ satisfy $d_i(X_j)=-\delta_{ij}$ for every $j=0,1,\ldots, n$, then 
\[d_P(X_R)=\delta_{P \subseteq R}(-1)^{S(P, R)+\frac{|P|(|P|+1)}{2}}X_{R \setminus P} \]
for every $P \subseteq \lbrace 1, \ldots, n \rbrace$, $R \subseteq \lbrace 0, 1, \ldots, n \rbrace$.
\end{lemma}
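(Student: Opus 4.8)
The plan is to induct on $|P|$, peeling off the \emph{largest} index of $P$ and reducing to the single-index formula of Lemma~\ref{lemmader}. The cases $|P|=0$ and $|P|=1$ are immediate: for $P=\emptyset$ we have $d_\emptyset=\textrm{Id}_A$ and both sides equal $X_R$, while for $P=\{i\}$ the asserted identity is precisely Lemma~\ref{lemmader}, since $\tfrac{1\cdot 2}{2}=1$ and $\delta_{\{i\}\subseteq R}=\delta_{i\in R}$.

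For the inductive step, suppose the formula holds for all subsets of cardinality $m$ and let $|P|=m+1$. I would write $P=Q\sqcup\{k\}$ with $k=\max P$ and $Q=P\setminus\{k\}$, so that $|Q|=m$ and, by the convention $d_P=d_{i_1}\cdots d_{i_{m+1}}$, we have $d_P=d_Q\circ d_k$. Applying Lemma~\ref{lemmader} first gives $d_k(X_R)=\delta_{k\in R}(-1)^{S(\{k\},R)+1}X_{R\setminus\{k\}}$, and then the induction hypothesis applies to $d_Q(X_{R\setminus\{k\}})$. Combining the two Kronecker factors yields $\delta_{k\in R}\,\delta_{Q\subseteq R\setminus\{k\}}=\delta_{P\subseteq R}$ (using $k\notin Q$), and the surviving basis element is $X_{(R\setminus\{k\})\setminus Q}=X_{R\setminus P}$, so only the exponent remains to be verified.

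The heart of the argument, and the step I expect to be most delicate, is the sign bookkeeping. After the two substitutions the exponent is $S(\{k\},R)+1+S(Q,R\setminus\{k\})+\tfrac{m(m+1)}{2}$, and it must be shown to equal (modulo $2$) the target exponent $S(P,R)+\tfrac{(m+1)(m+2)}{2}$. I would dispatch this with two applications of Lemma~\ref{lemmacrucial}. First, since $k$ exceeds every element of $Q$, Lemma~\ref{lemmacrucial}~\ref{LemmaSF2}) with ambient set $R$, $F=\{k\}$ and $F'=Q$ gives $S(Q,R\setminus\{k\})=S(Q,R)$, because each term $S(\{j\},\{k\}\sqcup\{j\})$ vanishes for $j<k$. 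Second, Lemma~\ref{lemmacrucial}~\ref{lemmaSF1}) applied to $P$, to $Q$ and to $\{k\}$ inside $R$ produces the additive relation $S(P,R)=S(Q,R)+S(\{k\},R)-m$. Substituting both identities reduces the required equality to the numerical identity $\tfrac{(m+1)(m+2)}{2}-\tfrac{m(m+1)}{2}=m+1$, which holds on the nose; in fact one finds that the two exponents agree as integers, not merely modulo $2$.

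A small point worth flagging is that, although Lemma~\ref{lemmacrucial} is stated for subsets of $\{1,\dots,n\}$, the function $S$ depends only on the relative order of the indices involved, so its properties carry over verbatim to subsets of $\{0,1,\dots,n\}$, exactly as was noted for \eqref{reorder}. This is what permits $R$ to contain the index $0$; the hypothesis $P\subseteq\{1,\dots,n\}$ guarantees $k=\max P\geq 1$, and none of the combinatorial identities above are sensitive to the ambient set beyond the ordering of the elements appearing in them.
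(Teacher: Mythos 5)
Your proof is correct and follows essentially the same route as the paper: induction on $|P|$, peeling off the largest index $t=\max P$ so that $d_P=d_{P\setminus\{t\}}\circ d_t$, applying Lemma~\ref{lemmader} first and then the induction hypothesis, with the sign adjustment supplied by Lemma~\ref{lemmacrucial}. The only cosmetic differences are that the paper dispatches the case $P\not\subseteq R$ separately via anticommutativity of the $d_i$ rather than through the Kronecker factors, and it uses part \emph{\ref{lemmaFPF})} of Lemma~\ref{lemmacrucial} in one step where you combine parts \emph{\ref{lemmaSF1})} and \emph{\ref{LemmaSF2})} — both computations yield the same exponent.
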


\begin{proof}
Let $P=\lbrace i_1 < i_2 < \ldots < i_{|P|} \rbrace$. If $P \not\subseteq R$, then there is a $j$, $1 \leq j \leq |P|$, such that $i_j \notin R$. Since $d_P=d_{i_1}d_{i_2} \circ \cdots \circ d_{i_{|P|}}$ and  $d_pd_q=-d_qd_p$ for every $1 \leq p,q \leq n$ we can write $d_P=(-1)^{S(P \setminus \lbrace j \rbrace,P)}d_{P \setminus \lbrace i_j \rbrace}d_{i_j}$ as it happens in \eqref{reorder}. Then
\[d_P(X_R)=(-1)^{S(P \setminus \lbrace j \rbrace,P)}d_{P \setminus \lbrace i_j \rbrace}d_{i_j}(X_R)=0\]
thanks to Lemma~\ref{lemmader}.

Now let $P \subseteq R$. If $P=\emptyset$, our thesis is trivially verified, while when $|P|=1$, it holds true by Lemma~\ref{lemmader}. Next, suppose that we have
\[d_P(X_R)=(-1)^{S(P, R)+\frac{|P|(|P|+1)}{2}}X_{R \setminus P}\]
for every $R \subseteq \lbrace 1, \ldots, n \rbrace$ and every $P \subseteq R$ with cardinality $|P|=m \geq 1$.
Consider a subset $P'$ of $R$ with cardinality $m+1$. We can write $P'=\lbrace i_1 < i_2 < \ldots < i_m<t \rbrace=P \sqcup \lbrace t \rbrace$, for some $P=\lbrace i_1 < i_2 < \ldots < i_m \rbrace \subseteq R$. Then we have
\begin{align*}
d_{P'}(X_R)&=d_P(d_t(X_R))\overset{Lem.~\ref{lemmader}}{=} d_P \left((-1)^{S(\lbrace t \rbrace, R)+1}X_{R \setminus \lbrace t \rbrace } \right)= (-1)^{S(\lbrace t \rbrace, R)+1}d_P \left(X_{R \setminus \lbrace t \rbrace } \right)\\
\overset{Ind. \ hyp.}&{=} (-1)^{S(\lbrace t \rbrace, R)+1+S(P, R \setminus \lbrace t \rbrace)+\frac{|P|(|P|+1)}{2}}X_{R \setminus P'}\\
&=(-1)^{S(\lbrace t \rbrace, R)+1+S(P' \setminus \lbrace t \rbrace, R \setminus \lbrace t \rbrace)+\frac{|P|(|P|+1)}{2}}X_{R \setminus P'}\\
\overset{Lem.~\ref{lemmacrucial}, \ \ref{lemmaFPF})}&{=} (-1)^{S(\lbrace t \rbrace, R)+1+S(P', R)+S(\lbrace t \rbrace, P')-S(\lbrace t \rbrace, R)+\frac{|P|(|P|+1)}{2}}X_{R \setminus P'}\\
&=(-1)^{S(P', R)+|P|+1+\frac{|P|(|P|+1)}{2}}X_{R \setminus P'}\\
&= (-1)^{S(P', R)+\frac{(|P'|)(|P'|+1)}{2}}X_{R \setminus P'}
\end{align*}
and the thesis follow from induction.
\end{proof}

\begin{corollary}\label{kerder}
If each $d_i$ satisfy $d_i(X_j)=-\delta_{ij}$ for every $j=0,1,\ldots, n$, then
\[\ker d_P=\left\lbrace \sum_{P \not\subseteq R \subseteq \lbrace 0,1, \ldots, n \rbrace} \mu_R X_R \ | \ \mu_R \in \Bbbk \right\rbrace\]
for every $P \subseteq \lbrace 1, \ldots, n \rbrace$.
\end{corollary}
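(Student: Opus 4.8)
The plan is to reduce everything to the explicit formula for $d_P$ on basis elements supplied by Lemma~\ref{lemmader2}, exploiting that $\lbrace X_R \rbrace_{R \subseteq \lbrace 0,1,\ldots,n \rbrace}$ is a $\Bbbk$-basis of $A$. First I would write an arbitrary element $a \in A$ in this basis as $a = \sum_{R \subseteq \lbrace 0,1,\ldots,n \rbrace} \mu_R X_R$ with $\mu_R \in \Bbbk$, and compute $d_P(a)$ by linearity. By Lemma~\ref{lemmader2} each summand with $P \not\subseteq R$ is killed, so that
\[
d_P(a)=\sum_{P \subseteq R \subseteq \lbrace 0,1,\ldots,n \rbrace} \mu_R (-1)^{S(P,R)+\frac{|P|(|P|+1)}{2}} X_{R \setminus P}.
\]

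The key observation — and really the only substantive point — is that the basis elements appearing on the right-hand side are pairwise distinct, so there is no cancellation. Indeed, as $R$ ranges over subsets of $\lbrace 0,1,\ldots,n \rbrace$ containing $P$, the sets $R \setminus P$ range over \emph{all} subsets of $\lbrace 0,1,\ldots,n \rbrace \setminus P$, and the assignment $R \mapsto R \setminus P$ is a bijection between $\lbrace R : P \subseteq R \rbrace$ and the subsets of $\lbrace 0,1,\ldots,n \rbrace \setminus P$, with inverse $Q \mapsto Q \sqcup P$. Hence $\lbrace X_{R \setminus P} : P \subseteq R \rbrace$ is a subfamily of the basis of $A$ consisting of distinct elements, and in particular these elements are linearly independent.

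Since each coefficient $(-1)^{S(P,R)+\frac{|P|(|P|+1)}{2}}$ is a unit ($\pm 1$ in $\Bbbk$), linear independence forces $d_P(a)=0$ if and only if $\mu_R = 0$ for every $R$ with $P \subseteq R$. Equivalently, $a \in \ker d_P$ precisely when the only basis elements occurring in $a$ are those $X_R$ with $P \not\subseteq R$, which is exactly the asserted description of $\ker d_P$. I would then conclude that both inclusions hold, completing the proof; no genuine obstacle arises, as the argument is purely a matter of reading off the formula of Lemma~\ref{lemmader2} against the basis.
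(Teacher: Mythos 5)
Your proof is correct and is exactly the argument the paper intends: the statement is presented as an immediate corollary of Lemma~\ref{lemmader2}, and your verification (expand in the basis $\lbrace X_R \rbrace$, note that $R \mapsto R \setminus P$ is injective on the sets containing $P$ so the surviving terms are distinct basis elements with $\pm 1$ coefficients) is precisely the routine check being left to the reader.
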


\begin{notation}
For every $P\subseteq \lbrace 1, \ldots, n \rbrace$ let us indicate with 
\[\int a \ dX_P\]
the set of all elements $b \in A$ such that $d_P(b)=a$. 
\end{notation}

\begin{proposition}\label{propint}
If each $d_i$ satisfy $d_i(X_j)=-\delta_{ij}$ for every $j=0,1,\ldots, n$, then 
\[\int X_R \ d X_P=\begin{cases}
(-1)^{\frac{|P|(|P|+1)}{2}+|R||P|}X_R X_P+\sum_{Q \not\supseteq P}\mu_Q X_Q\quad &\textrm{if } P \cap R= \emptyset\\
\emptyset \quad &\textrm{if } P \cap R\neq \emptyset
\end{cases}\]
for every $P \subseteq \lbrace 1, \ldots, n \rbrace$, $R \subseteq \lbrace 0, 1, \ldots, n \rbrace$.
If both $\int a \ dX_P$ and $\int b \ dX_P$ are non-empty, then
\begin{align*}
\int c_1a+c_2b \ d X_P&=\left\lbrace c_1\omega_a+c_2\omega_b \ | \ \omega_a \in \int a \ dX_P, \omega_b \in \int b \ dX_P \right\rbrace\\
&=c_1\int a \ dX_P+c_2\int b \ dX_P
\end{align*}
for every  $c_1,c_2 \in \Bbbk$, $c_1c_2 \neq 0$.
\end{proposition}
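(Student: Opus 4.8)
The plan is to read $\int a\,dX_P$ as the fiber $\{b\in A : d_P(b)=a\}$ of the $\Bbbk$-linear map $d_P$. Since $d_P$ is linear, this fiber is either empty (when $a$ lies outside the image of $d_P$) or a coset $b_0+\ker d_P$, where $b_0$ is any single solution of $d_P(b_0)=a$. By Corollary~\ref{kerder} this kernel is precisely $\{\sum_{Q\not\supseteq P}\mu_Q X_Q\}$, so the first statement reduces to (a) deciding when the fiber is empty and (b) exhibiting one explicit solution in the non-empty case. The linearity statement will then follow formally from this coset description.

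For the empty case I would argue via the image of $d_P$. By Lemma~\ref{lemmader2}, for any $b=\sum_Q c_Q X_Q$ we have $d_P(b)=\sum_{Q\supseteq P}c_Q(-1)^{S(P,Q)+\frac{|P|(|P|+1)}{2}}X_{Q\setminus P}$, a linear combination of basis elements $X_S$ with $S\cap P=\emptyset$. Hence the image of $d_P$ is contained in $\mathrm{span}\{X_S : S\cap P=\emptyset\}$. When $P\cap R\neq\emptyset$ the basis vector $X_R$ does not lie in this span, so $d_P(b)=X_R$ has no solution and $\int X_R\,dX_P=\emptyset$.

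The heart of the argument is the case $P\cap R=\emptyset$, where I claim $b_0=(-1)^{\frac{|P|(|P|+1)}{2}+|R||P|}X_RX_P$ is a solution, i.e. $d_P(X_RX_P)=(-1)^{\frac{|P|(|P|+1)}{2}+|R||P|}X_R$ (recall $(\pm1)^2=1$). Rather than expanding the product $X_RX_P$ into the canonical basis, I would compute $d_P(X_RX_P)$ by peeling off the generators of $P$ one at a time, from the largest downward. Writing $P=\{i_1<\cdots<i_m\}$ and $d_P=d_{i_1}\circ\cdots\circ d_{i_m}$, apply $d_{i_m}$ first: since $i_m\notin R$ we have $d_{i_m}(X_R)=0$ by Lemma~\ref{lemmader}, so the skew-derivation rule gives $d_{i_m}(X_RX_P)=\sigma(X_R)\,d_{i_m}(X_P)$. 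Using $\sigma(X_R)=(-1)^{|R|}X_R$ (as $\sigma$ is the main involution, see Proposition~\ref{canonictuple}) together with Lemma~\ref{lemmader} and $S(\{i_m\},P)=m-1$, this equals $(-1)^{|R|+m}X_RX_{P\setminus\{i_m\}}$. Iterating, the factor $X_R$ is never touched and each step $k$ removes the current largest index $i_k$ while contributing $(-1)^{|R|+k}$, so after all $m$ steps one gets $d_P(X_RX_P)=(-1)^{\sum_{k=1}^m(|R|+k)}X_R=(-1)^{|R||P|+\frac{|P|(|P|+1)}{2}}X_R$, exactly the required sign. This identifies $\int X_R\,dX_P$ with $b_0+\ker d_P$, which is the displayed coset.

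Finally, the linearity statement is a formal consequence. If $\int a\,dX_P$ and $\int b\,dX_P$ are non-empty, pick solutions $x_a,x_b$, so the two sets are $x_a+\ker d_P$ and $x_b+\ker d_P$. For $c_1c_2\neq0$ the element $c_1x_a+c_2x_b$ solves $d_P(\,\cdot\,)=c_1a+c_2b$; and since $c_i\neq0$ forces $c_i\ker d_P=\ker d_P$, while $\ker d_P+\ker d_P=\ker d_P$, the set $\{c_1\omega_a+c_2\omega_b \mid \omega_a\in x_a+\ker d_P,\ \omega_b\in x_b+\ker d_P\}$ equals $c_1x_a+c_2x_b+\ker d_P$, which is exactly $\int(c_1a+c_2b)\,dX_P$. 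I expect the sign bookkeeping in the peeling computation to be the only delicate point; everything else is linear algebra resting on Lemmas~\ref{lemmader} and \ref{lemmader2} and Corollary~\ref{kerder}.
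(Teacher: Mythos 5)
Your proposal is correct and follows essentially the same route as the paper: emptiness via the image of $d_P$ (Lemma~\ref{lemmader2}), the explicit solution $(-1)^{\frac{|P|(|P|+1)}{2}+|R||P|}X_RX_P$ verified through the skew-derivation rule with $d_i(X_R)=0$ for $i\in P$, the fiber described as this solution plus $\ker d_P$ (Corollary~\ref{kerder}), and the linearity claim by coset arithmetic. The only cosmetic difference is that you peel off the derivations $d_{i_m},\dots,d_{i_1}$ one at a time using Lemma~\ref{lemmader}, whereas the paper applies the iterated identity $d_P(X_RX_P)=\sigma^{|P|}(X_R)d_P(X_P)$ and Lemma~\ref{lemmader2} in one step; your sign bookkeeping $\sum_{k=1}^{m}(|R|+k)=|R||P|+\frac{|P|(|P|+1)}{2}$ checks out.
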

\begin{proof}
Let us consider $P \subseteq \lbrace 1, \ldots, n \rbrace$ such that $P \cap R \neq \emptyset$ and suppose there is an element 
\[a \in \int X_R \ dX_P.\]
We write $a =\sum_{Q \subseteq \lbrace 0,1, \dots, n \rbrace}\mu_{Q}X_Q$. Then, by Lemma~\ref{lemmader2}, we have
\begin{align*}
X_R&=d_P(a)=\sum_{Q \subseteq \lbrace 0,1, \dots, n \rbrace}\mu_{Q}d_P(X_Q)=\sum_{Q \supseteq P}\mu_{Q}(-1)^{S(P, Q)+\frac{|P|(|P|+1)}{2}}X_{Q \setminus P}.
\end{align*}
Hence there must be a $Q_0 \supseteq P$ such that $Q_0 \setminus P=R$. It follows that $P \cap R=P \cap (Q_0 \setminus P) =\emptyset$, a contradiction.
Now consider a $P \subseteq \lbrace 1, \ldots, n \rbrace$ such that $P \cap R = \emptyset$. Using Lemma~\ref{lemmader2} we immediately see that
\[d_P \left( \sum_{Q \not\supseteq P}\mu_{Q}X_Q\right)=0\]
and that
\begin{align*}
d_P \left((-1)^{\frac{|P|(|P|+1)}{2}+|P||R|}X_RX_P\right)&=(-1)^{\frac{|P|(|P|+1)}{2}+|P||R|}d_P \left(X_RX_P\right)\\
&=(-1)^{\frac{|P|(|P|+1)}{2}+|P||R|}\sigma^{|P|}(X_R)d_P \left(X_P\right)\\
&=(-1)^{\frac{|P|(|P|+1)}{2}}X_Rd_P \left(X_P\right)\overset{Lem.~\ref{lemmader2}}{=}(-1)^{|P|(|P|+1)}X_R=X_R.
\end{align*}
Finally let $f_0:=(-1)^{\frac{|P|(|P|+1)}{2}+|P||R|}X_RX_P$ and $f_1$ be another element of the set $\int X_R \ dX_P$. We write
\[f_1-f_0=\sum_{Q \subseteq \lbrace 0, 1, \ldots, n \rbrace}\mu_Q X_Q.\]
Then
\begin{align*}
0=d_P(f_1-f_0)=\sum_{Q \subseteq \lbrace 0, 1, \ldots, n \rbrace}\mu_Q d_P(X_Q)\overset{Lem.~\ref{lemmader2}}{=}\sum_{Q \supseteq P}\mu_Q(-1)^{S(P, Q)+\frac{|P|(|P|+1)}{2}}X_{Q \setminus P},
\end{align*}
i.e. $\mu_Q=0$ for every $Q \supseteq P$, by linear independence.
This proves that
\[f_1=f_0+\sum_{Q \not\supseteq P}\mu_Q X_Q.\]

Clearly 
\[\int c_1a+c_2b \ d X_P \supseteq \left\lbrace c_1\omega_a+c_2\omega_b \ | \ \omega_a \in \int a \ dX_P, \omega_b \in \int b \ dX_P \right\rbrace.\]
Conversely, let $\omega \in \int c_1a+c_2b \ d X_P$. Then $d_P(\omega)=c_1a+c_2b$. Since $\int a \ d X_P$ and $\int b \ d X_P$ are non-empty, there are elements $\omega_u \in \int u \ d X_P$ for $u=a,b$. Then
\[d_P(\omega)=c_1d_P(\omega_a)+c_2d_P(\omega_b)=d_P(c_1 \omega_a + c_2 \omega_b)\]
and therefore $\omega-c_1\omega_a-c_2\omega_b=:\omega_0 \in \ker d_P$. Suppose $c_1 \neq 0$. Then $\omega=c_1\left( \omega_a+\frac{\omega_0}{c_1}\right)+c_2 \omega_b$ and $\omega_a+\frac{\omega_0}{c_1} \in \int a \ d X_P$. The results is similarly proved when $c_2 \neq 0$.
\end{proof}

\section{rt-separability of the cowreath \texorpdfstring{$(A \otimes E(n)^{op}, E(n), \psi)$}{} }\label{rtseparability}

Remember that, given an Hopf algebra $H$ and an $H$-comodule algebra $A$, we can build a cowreath $(A \otimes H^{op}, H, \psi)$ following the directions of Proposition~\ref{propXA} and that this cowreath is rt-separable if, and only if, the conditions of Proposition~\ref{PropReferee} are verified. We first show that the number of these conditions can be reduced.
\begin{lemma}\label{useful1}
A cowreath $(A\otimes H^{op},H,\psi )$ is rt-separable, i.e. separable via a
Casimir element $B:H\otimes H\rightarrow A\otimes H^{op}$
of the form
\begin{equation*}
h\otimes h^{\prime } \mapsto B^{A}\left( h\otimes h^{\prime }\right)
\otimes 1_{H}
\end{equation*}%
if and only if $B^{A}$ satisfies the following conditions
\begin{equation}
a_{0}B^{A}(1\otimes S(a_1)ha_{2})=B^{A}(1\otimes h)a
\label{B1S1}
\end{equation}%
\begin{equation}
B^{A}(1\otimes h)_{0}\otimes B^{A}(1\otimes h)_{1}=B^{A}(1\otimes h_{1})\otimes h_{2}  \label{B2S1}
\end{equation}%
\begin{equation}
B^A(h \otimes h')=B^A(1 \otimes S(h)h')\text{ and }B^{A}\left(
1_{H}\otimes 1_{H}\right) =1_{A}  \label{B3S1}
\end{equation}%
Moreover, whenever $S$ is invertible, it is rth-separable if and only if $B^A$
satisfies the further condition%
\begin{equation}
B^{A}\left( 1\otimes h\right) \cdot B^{A}\left( 1 \otimes h'
\right) =B^{A}\left( 1\otimes hh'\right)  \label{B4S1}
\end{equation}
\end{lemma}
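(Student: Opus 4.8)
The plan is to deduce the statement from Proposition~\ref{PropReferee}, which already characterizes rt-separability through \eqref{B1S}--\eqref{B3S} and rth-separability through the additional condition \eqref{B4S}. It therefore suffices to prove that, for a map $B^{A}\colon H\otimes H\to A$, the system $\{\eqref{B1S},\eqref{B2S},\eqref{B3S}\}$ is equivalent to $\{\eqref{B1S1},\eqref{B2S1},\eqref{B3S1}\}$, and that, granting these, \eqref{B4S} is equivalent to \eqref{B4S1}. The first simplification is immediate, since \eqref{B2S1} coincides verbatim with \eqref{B2S}. The whole reduction then hinges on the \emph{reduction formula} $B^{A}(h\otimes h')=B^{A}(1\otimes S(h)h')$, the first half of \eqref{B3S1}, which collapses $B^{A}$ to the single-variable map $h\mapsto B^{A}(1\otimes h)$; once it is available, all remaining conditions translate mechanically.

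The main obstacle is extracting this reduction formula from \eqref{B3S}. I would first specialize \eqref{B3S} at $h'=1_{H}$, obtaining
\begin{equation*}
B^{A}(h_{1}\otimes h_{2}c)=\varepsilon(h)\,B^{A}(1\otimes c)\tag{$\star$}
\end{equation*}
for all $h,c\in H$. Separately, the antipode axioms yield the purely formal identity $h_{1}\otimes h_{2}S(h_{3})=h\otimes 1_{H}$, whence $B^{A}(h\otimes h')=B^{A}(h_{1}\otimes h_{2}S(h_{3})h')$. Reading $h_{1}\otimes h_{2}$ as the coproduct of the first leg of $\Delta(h)$ and treating $h_{3}$ as a spectator, I would apply $(\star)$ to the pair $(h_{1},h_{2})$ with $c=S(h_{3})h'$; the counit it produces recombines with the spectator through $\varepsilon(h_{1})h_{2}=h$, giving exactly $B^{A}(h\otimes h')=B^{A}(1\otimes S(h)h')$. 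The converse is routine: substituting the reduction formula into the left-hand side of \eqref{B3S} and using $S(h_{1})h_{2}=\varepsilon(h)1_{H}$ reduces both sides to $\varepsilon(h)\,B^{A}(1\otimes S(h')h'')$, so \eqref{B3S1} implies \eqref{B3S}. The normalization $B^{A}(1_{H}\otimes 1_{H})=1_{A}$ is shared by \eqref{B3S} and \eqref{B3S1}, so it transfers with no work.

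With the reduction formula in hand, the equivalence $\eqref{B1S}\Leftrightarrow\eqref{B1S1}$ follows by direct substitution: \eqref{B1S1} is precisely \eqref{B1S} specialized at $h=1_{H}$ after rewriting $B^{A}(a_{1}\otimes h a_{2})$ as $B^{A}(1\otimes S(a_{1})h a_{2})$, while conversely rewriting both arguments of $B^{A}(h a_{1}\otimes h' a_{2})$ through the reduction formula and then invoking \eqref{B1S1} with $S(h)h'$ in place of $h$ recovers the general \eqref{B1S}. Finally, for the heavy case I would rewrite \eqref{B4S} via the reduction formula as $B^{A}(1\otimes S(h))\cdot B^{A}(1\otimes h')=B^{A}(1\otimes S(h)h')$; since $S$ is assumed invertible, $S(h)$ ranges over all of $H$ as $h$ does, so this is equivalent to \eqref{B4S1}. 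Assembling the three equivalences with Proposition~\ref{PropReferee} gives the claim.
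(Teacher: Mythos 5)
Your proposal is correct and follows essentially the same route as the paper: it reduces everything to Proposition~\ref{PropReferee}, extracts the reduction formula $B^{A}(h\otimes h')=B^{A}(1\otimes S(h)h')$ from \eqref{B3S} via $h\otimes h'=h_{1}\otimes h_{2}S(h_{3})h'$ and coassociativity, and then translates \eqref{B1S} and \eqref{B4S} by direct substitution, using the invertibility of $S$ for the heavy case exactly as the paper does. No gaps.
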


\begin{proof}
Suppose $(A\otimes H^{op},H,\psi )$ is separable via the
Casimir element $B$ defined in the statement. By Proposition~\ref{PropReferee}, equalities \eqref{B1S}-\eqref{B3S} hold.
Then
\[B^A(h \otimes h')=B^A(h_1 \otimes h_2S(h_3)h')\overset{\eqref{B3S}}{=}B^A(1 \otimes S(h)h')\]
for any $h, h' \in H$, i.e. we have \eqref{B3S1}. Furthermore \eqref{B2S} coincides with \eqref{B2S1} and
\[a_0B^A(1 \otimes S(a_1)ha_2)\overset{\eqref{B3S1}}{=}a_0B^A(a_1 \otimes ha_2)\overset{\eqref{B1S}}{=}B^A(1 \otimes h)a,\]
so that also \eqref{B1S1} holds.
On the other hand, suppose \eqref{B1S1}-\eqref{B3S1} are verified. Then
\begin{align*}
B^A(h_1h' \otimes h_2h'')\overset{\eqref{B3S1}}&{=}B^A(1 \otimes S(h_1h')h_2h''=B^A(1 \otimes S(h')S(h_1)h_2h'')=\varepsilon(h)B^A(1 \otimes S(h')h'')\\
\overset{\eqref{B3S1}}&{=}\varepsilon(h)B^A(h' \otimes h'')
\end{align*}
for any $h,h',h'' \in H$. Therefore \eqref{B3S} holds. Once again we have that \eqref{B2S1} and \eqref{B2S} coincide and 
\begin{align*}
a_{0}B^{A}(ha_{1}\otimes h^{\prime }a_{2})\overset{\eqref{B3S1}}&{=}a_{0}B^{A}(1\otimes S(ha_{1})h^{\prime }a_{2})=a_{0}B^{A}(1\otimes S(a_1)S(h)h'a_{2})
\overset{\eqref{B1S1}}{=} B^{A}(1\otimes S(h)h')a\\
\overset{\eqref{B3S1}}&{=} B^{A}(h\otimes h')a,
\end{align*}
so that we get \eqref{B1S}. By Proposition~\ref{PropReferee} we can conclude that $(A\otimes H^{op},H,\psi )$ is rt-separable.

Now suppose the antipode $S$ is an invertible map. If $(A\otimes H^{op},H,\psi )$ is rth-separable, the first part of the proof and Proposition~\ref{PropReferee} tell us that \eqref{B1S1}-\eqref{B3S1} and \eqref{B4S} hold.
Then we have
\begin{align*}
B^A(1 \otimes h)\cdot B^A(1 \otimes h')\overset{\eqref{B3S1}}&{=}B^A(S^{-1}(h) \otimes 1)\cdot B^A(1 \otimes h')\overset{\eqref{B4S}}{=}B^A(S^{-1}(h) \otimes h')\overset{\eqref{B3S1}}{=}B^A(1 \otimes h h'),
\end{align*}
that is \eqref{B4S1}. Conversely, if \eqref{B1S1}-\eqref{B4S1} holds, then 
\begin{align*}
B^A(h \otimes 1) \cdot B^A(1 \otimes h')\overset{\eqref{B3S1}}&{=}B^A(1 \otimes S(h))\cdot B^A(1 \otimes h')\overset{\eqref{B4S1}}{=}B^A(1 \otimes S(h)h')\overset{\eqref{B3S1}}{=}B^A(h \otimes h'),
\end{align*}
i.e. \eqref{B1S}-\eqref{B4S} are satisfied and the cowreath is rth-separable.
\end{proof}

Let $\Bbbk$ be a fixed field of characteristic $\chara \Bbbk \neq 2$, $H=E(n)$ and $A$ be a finite-dimensional $H$-comodule algebra. Remember that the Hopf algebra $E(n)$ has $\Bbbk$-basis $\lbrace g^jx_P \ | \ P\subseteq \lbrace 1, \ldots ,n \rbrace, j \in \lbrace 0,1 \rbrace \rbrace$ and that to $\rho$ is associated a tuple of maps $(\varphi, d_1, \ldots, d_n)$ (see Theorem~\ref{maingen}). Let us also define $t_{k,Q}:=B^A(1 \otimes g^kx_Q)$ for $k=0,1$ and $Q \subseteq \lbrace 1, \ldots, n \rbrace$.
\begin{theorem}\label{theo1}
The cowreath $(A\otimes E(n)^{op},E(n),\psi )$ is rt-separable if, and only if,
\begin{equation}\label{B0E}
B^{A}\left(1\otimes 1\right) =1,
\end{equation}
\begin{equation}
B^A(g^jx_P \otimes g^kx_Q)=B^A(1 \otimes S(g^jx_P)g^kx_Q),  \label{B1E}
\end{equation}%
\begin{equation}\label{B2E}
\varphi(t_{k,Q})=(-1)^{k+|Q|}t_{k,Q},
\end{equation}
\begin{equation}\label{B3E}
d_P(t_{k,Q})=\begin{cases}
(-1)^{S(P,Q)+\frac{|P|(|P|+2k+1)}{2}} t_{k,Q \setminus P} &\textrm{if } P \subseteq Q, \\
0  &\textrm{if } P \not\subseteq Q,
\end{cases}  
\end{equation}
\begin{equation} \label{B4E}
t_{k,Q}a =\varphi^{|Q|}(a)t_{k,Q}+\delta_{k1}\underset{\emptyset \neq P \subseteq Q^c}{\sum}(-1)^{\frac{|P|(|P|+3)}{2}+S(P,P\sqcup Q)}2^{|P|}d_P\left(\varphi^{|P|+|Q|}(a)\right)t_{k,P \sqcup Q}
\end{equation}
for $j,k=0,1$ and every $P,Q \subseteq \lbrace 1, \ldots, n\rbrace$.
Moreover it is rth-separable if, and only if, in addition,
\begin{equation}  \label{B6S}
\begin{cases}
(t_{1,\emptyset})^2=1\\
t_{1,P}=(-1)^{|P|}t_{0,P}t_{1,\emptyset}
\end{cases}
\end{equation}
for every $P \subseteq \lbrace 1, \ldots, n\rbrace$.
\end{theorem}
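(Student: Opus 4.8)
The plan is to run everything through Lemma~\ref{useful1}, which recasts rt-separability as the conditions \eqref{B1S1}--\eqref{B3S1} (and rth-separability as these together with \eqref{B4S1}), and then to translate each of these into a statement about the values $t_{k,Q}=B^A(1\otimes g^kx_Q)$ by means of the explicit coaction \eqref{coactionfinal}. Writing $\rho(a)=\sum_{j,P}c_{j,P}(a)\otimes g^jx_P$ and collecting the coefficients of $x_P$ and $gx_P$ in \eqref{coactionfinal} gives the dictionary $c_{0,P}(a)=(-1)^{|P|(|P|+1)/2}\tfrac12\bigl(d_P(a)+\varphi(d_P(a))\bigr)$ and $c_{1,P}(a)=(-1)^{|P|(|P|+1)/2+|P|}\tfrac12\bigl(d_P(a)-\varphi(d_P(a))\bigr)$, which I will use repeatedly. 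First I would dispose of the two easy equivalences: the second equality in \eqref{B3S1} is literally \eqref{B0E}, while the first, after substituting the antipode formula \eqref{antipode} and the reordering rule \eqref{reorder}, is exactly \eqref{B1E}. The key consequence of \eqref{B1E} is that $B^A$ is completely determined by the family $\{t_{k,Q}\}$, because $S(g^jx_P)g^kx_Q$ is, up to a sign, either $0$ (when $P\cap Q\neq\emptyset$) or a single basis vector $g^mx_{P\sqcup Q}$; this reduces all remaining conditions to equalities among the $t_{k,Q}$.

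Next I would treat \eqref{B2S1}. Taking $h=g^kx_Q$ and expanding $\Delta$ through \eqref{deltagx}, the condition becomes $\rho(t_{k,Q})=\sum_{F\subseteq Q}(-1)^{S(F,Q)}t_{k,F}\otimes g^{|F|+k}x_{Q\setminus F}$. Comparing the coefficient of each basis vector $g^mx_R$ on both sides, using the dictionary above on the left, splits into three cases: $R\not\subseteq Q$ forces $d_R(t_{k,Q})=0$; the case $\emptyset\neq R\subseteq Q$ yields \eqref{B3E}; and $R=\emptyset$ yields \eqref{B2E}. The only real work here is the parity bookkeeping, i.e.\ matching $\tfrac{|R|(|R|+1)}{2}+S(Q\setminus R,Q)$ against $S(R,Q)+\tfrac{|R|(|R|+2k+1)}{2}$, which is settled by the complementation identity Lemma~\ref{lemmacrucial}~\ref{corcompl}) together with the parity of $|Q|-|R|+k$ dictated by each case; the converse is the same comparison read backwards.

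The heart of the argument, and the step I expect to be the main obstacle, is \eqref{B1S1}$\iff$\eqref{B4E} under the standing assumption of \eqref{B2E}--\eqref{B3E}. Here I would apply $(\mathrm{Id}\otimes\Delta)\rho$ to $a$, so that $a_0\otimes a_1\otimes a_2=\sum_{j,P}\sum_{F\subseteq P}(-1)^{S(F,P)}c_{j,P}(a)\otimes g^jx_F\otimes g^{|F|+j}x_{P\setminus F}$, and then compute the triple product $S(a_1)\,g^kx_Q\,a_2$ in $E(n)$ using \eqref{antipode} and the commutation rules $gx_i=-x_ig$, $x_i^2=0$. Two structural facts organize the resulting sum. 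First, this product always has $g$-degree $\equiv k\pmod 2$, which explains why only the values $t_{k,\bullet}$ occur on the right of \eqref{B4E}. Second, for each fixed outer index $P$ disjoint from $Q$ the inner sum over $F\subseteq P$ carries a sign $(-1)^{(k+1)|F|}$ (after all the $S$- and reordering-signs have been collected via Lemma~\ref{lemmacrucial} and \eqref{reorder}), so that $\sum_{F\subseteq P}(-1)^{(k+1)|F|}=(1+(-1)^{k+1})^{|P|}$ equals $0$ when $k=0$ and $2^{|P|}$ when $k=1$; this simultaneously produces the factor $\delta_{k1}$, the scalar $2^{|P|}$, and the restriction $P\subseteq Q^c$ in \eqref{B4E}. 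Summing the $j=0,1$ contributions then collapses $c_{j,P}(a)$ into $d_P(\varphi^{|P|+|Q|}(a))$, while the term $P=\emptyset$ reproduces the diagonal contribution $\varphi^{|Q|}(a)t_{k,Q}$; the remaining sign $(-1)^{|P|(|P|+3)/2+S(P,P\sqcup Q)}$ is pinned down by the same sign lemmas. Since the left-hand side of \eqref{B1S1} is $t_{k,Q}a$ and the right-hand side is what I have just computed, the equivalence with \eqref{B4E} follows.

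Finally, for the rth case I would observe that, together with \eqref{B0E}, condition \eqref{B4S1} says precisely that $h\mapsto B^A(1\otimes h)$ is a unital algebra map $E(n)\to A$. Writing $g^kx_Qg^{k'}x_{Q'}=(-1)^{k'|Q|}g^{k+k'}x_Qx_{Q'}$ (which vanishes unless $Q\cap Q'=\emptyset$), the forward implication is immediate by specialization: $h=h'=g$ gives $(t_{1,\emptyset})^2=1$, and $h=x_P$, $h'=g$ gives $t_{1,P}=(-1)^{|P|}t_{0,P}t_{1,\emptyset}$, which is \eqref{B6S}. The converse is the delicate half: assuming \eqref{B6S} and \eqref{B0E}--\eqref{B4E}, one uses the $k=1$ instances of \eqref{B4E}---which express each $t_{1,P\sqcup Q}$ through lower products $t_{1,Q}t_{0,\bullet}$---together with the two relations in \eqref{B6S} to establish the full multiplicativity \eqref{B4S1} by induction on $|Q|+|Q'|$. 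A last appeal to Lemma~\ref{useful1} then translates the verified families \eqref{B0E}--\eqref{B4E} and \eqref{B6S} back into rt- and rth-separability.
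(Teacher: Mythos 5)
Your proposal is correct and follows essentially the same route as the paper: reduction via Lemma~\ref{useful1}, coefficient comparison in $\rho(t_{k,Q})=(B^A\otimes\mathrm{Id})(1\otimes\Delta(g^kx_Q))$ to obtain \eqref{B2E}--\eqref{B3E}, the computation of $B^A(1\otimes S((x_P)_1)g^kx_Q(x_P)_2)$ with the binomial sum $\sum_F(-1)^{(k+1)|F|}$ producing $\delta_{k1}2^{|P|}$ for \eqref{B4E}, and the derivation of the remaining multiplicativity relations from the $k=1$ instances of \eqref{B4E} applied to $a=t_{0,\{i\}}$ for the rth case. The only cosmetic imprecision is attributing the restriction $P\subseteq Q^c$ to the binomial sum rather than to the vanishing of $x_Px_Q$ when $P\cap Q\neq\emptyset$, which does not affect the argument.
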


\begin{proof}
Since $B^A$ is a linear map, it is completely determined by the values of $B^A(g^jx_P \otimes g^k x_Q)$, where as usual $g^jx_P$, $g^kx_Q$ denote the elements of the basis of $E(n)$. This means that in our case, \eqref{B3S1} is equivalent to \eqref{B0E} and \eqref{B1E}.

By linearity of $B^A$, \eqref{B2S1} holds true for every $h \in H$ if, and only if, it is verified for every element $g^kx_Q$ of the basis. This means that it is equivalent to 
\begin{equation}\label{rhot}
\rho(t_{k,Q})=(B^A \otimes \textrm{Id}_H)(1 \otimes \Delta(g^kx_Q)).
\end{equation}
for $k=0,1$, and every $Q \subseteq \lbrace 1, \ldots, n \rbrace$.
By means of \eqref{deltagx}, the RHS becomes equal to
\[(B^A \otimes \textrm{Id}_H) \left( 1 \otimes \sum_{F \subseteq Q} (-1)^{S(F,Q)} g^k x_F \otimes g^{|F|+k} x_{Q \setminus F} \right)= \sum_{F \subseteq Q} (-1)^{S(F,Q)} t_{k,F} \otimes g^{|F|+k} x_{Q \setminus F},\]
while, by employing \eqref{coactionfinal}, the LHS can be rewritten as
\[\underset{P}{\sum}(-1)^{\frac{|P|(|P|+1)}{2}} \left[ d_P\left(\frac{t_{k,Q}+(-1)^{|P|}\varphi(t_{k,Q})}{2}\right) \otimes x_P +d_P\left(\frac{(-1)^{|P|}t_{k,Q}-\varphi(t_{k,Q})}{2}\right) \otimes gx_P \right].\]
Then \eqref{rhot} becomes
\[
\begin{split}
&\underset{P}{\sum}(-1)^{\frac{|P|(|P|+1)}{2}} \left[ d_P\left(\frac{t_{k,Q}+(-1)^{|P|}\varphi(t_{k,Q})}{2}\right) \otimes x_P +d_P\left(\frac{(-1)^{|P|}t_{k,Q}-\varphi(t_{k,Q})}{2}\right) \otimes gx_P \right]=\\
&=\sum_{F \subseteq Q} (-1)^{S(F,Q)} t_{k,F} \otimes g^{|F|+k} x_{Q \setminus F}.
\end{split}
\]
By comparing elements with the same right tensorand we find
\begin{equation}\label{eq36}d_P\left(\frac{t_{k,Q}+(-1)^{|P|}\varphi(t_{k,Q})}{2}\right)=0=d_P\left(\frac{(-1)^{|P|}t_{k,Q}-\varphi(t_{k,Q})}{2}\right)
\end{equation}
for every $P \not \subseteq Q$
and
\begin{equation}\label{eq37}
\sum_{j=0,1}(-1)^{\frac{|P|(|P|+2j+1)}{2}}d_P\left(\frac{t_{k,Q}+(-1)^{|P|+j}\varphi(t_{k,Q})}{2}\right) \otimes g^jx_P   =(-1)^{S(Q \setminus P,Q)} t_{k,Q \setminus P} \otimes g^{|Q \setminus P|+k} x_P,
\end{equation}
for every $P \subseteq Q$. Equality \eqref{eq36} is equivalent to $d_P(t_{k,Q})=0$ for every $P \not \subseteq Q$, which is the second part of \eqref{B3E}.
By putting $P=\emptyset$  into \eqref{eq37} we obtain 
\begin{equation*}
\sum_{j=0,1}\frac{t_{k,Q}+(-1)^j\varphi(t_{k,Q})}{2} \otimes g^j = t_{k,Q} \otimes g^{|Q|+k},
\end{equation*}
i.e. \eqref{B2E}.
If we substitute it into \eqref{eq37} we get
\[(-1)^{\frac{|P|(|P|+1)}{2} +|P|(|P|+|Q|+k)}d_P\left(t_{k,Q}\right) \otimes g^{|P|+|Q|+k}x_P   =(-1)^{S(Q \setminus P,Q)} t_{k,Q \setminus P} \otimes g^{|Q \setminus P|+k} x_P\]
for every $P \subseteq Q$, i.e.
\begin{align*}
d_P\left(t_{k,Q}\right) &=(-1)^{S(Q \setminus P,Q)+|P|(|P|+|Q|+k)+\frac{|P|(|P|+1)}{2}} t_{k,Q \setminus P}\\
\overset{Lem.~\ref{lemmacrucial},\ \ref{corcompl})}&{=}(-1)^{S(P,Q)+k|P|+\frac{|P|(|P|+1)}{2}} t_{k,Q \setminus P}
\end{align*}
for every $P \subseteq Q$. Thus we can conclude that \eqref{eq37} is equivalent to \eqref{B2E} and
\[d_P\left(t_{k,Q}\right) =(-1)^{S(P,Q)+\frac{|P|(|P|+2k+1)}{2}} t_{k,Q \setminus P}\]
for every $P \subseteq Q$, that is the first part of \eqref{B3E}.

Next, we claim that
\begin{equation}\label{useful2}
B^A(1 \otimes S((x_P)_1)g^kx_Q(x_P)_2)=\delta_{k1}\delta_{P \subseteq Q^c}(-1)^{|P||Q|+S(P,P\sqcup Q)}t_{k,P \sqcup Q}2^{|P|}
\end{equation}
when $P \neq \emptyset$.
To prove this, observe that 
\begin{align*}
B^A(1 \otimes S((x_P)_1)g^kx_Q(x_P)_2)\overset{\eqref{deltagx}}&{=}\sum_{F \subseteq P} (-1)^{S(F,P)}B^A(1 \otimes S(x_F)g^kx_Qg^{|F|}x_{P\setminus F})\\
\overset{\eqref{antipode}}&{=}\sum_{F \subseteq P} (-1)^{S(F,P)}B^A(1 \otimes g^{|F|}x_Fg^kx_Qg^{|F|}x_{P\setminus F})\\
&=\sum_{F \subseteq P} (-1)^{S(F,P)+|P||Q|+|F|(|F|+k)}B^A(1 \otimes g^kx_Fx_{P\setminus F}x_Q)\\
\overset{\eqref{reorder}}&{=}\sum_{F \subseteq P} (-1)^{|P||Q|+|F|(|F|+k)}B^A(1 \otimes g^kx_Px_Q)\\
&=(-1)^{|P||Q|}B^A(1 \otimes g^kx_Px_Q)\sum_{F \subseteq P} (-1)^{|F|(|F|+k)}\\
&=(-1)^{|P||Q|}B^A(1 \otimes g^kx_Px_Q)\sum_{j=0}^{|P|} (-1)^{(k+1)j} \binom{|P|}{j}.\\
\end{align*}
If we assume $P \neq \emptyset$, the last term is equal to $\delta_{k1}(-1)^{|P||Q|}B^A(1 \otimes g^kx_Px_Q)2^{|P|}$ and thus we have proved our claim. We are going to use it to rewrite \eqref{B1S1} for $h=g^kx_Q$:
\begin{align*}
t_{k,Q}a&=a_0B^A(1 \otimes S(a_1)g^kx_Qa_2)\\
\overset{\eqref{coactionfinal}}&{=}\underset{P}{\sum}(-1)^{\frac{|P|(|P|+1)}{2}}  d_P\left(\frac{a+(-1)^{|P|}\varphi(a)}{2}\right) B^A(1 \otimes S((x_P)_1) g^kx_Q (x_P)_2)+\\
&+\underset{P}{\sum}(-1)^{\frac{|P|(|P|+1)}{2}}  d_P\left(\frac{(-1)^{|P|}a-\varphi(a)}{2}\right)B^A(1 \otimes S((x_P)_1)g^{k+1}x_Q g(x_P)_2)\\
&=\underset{P}{\sum}(-1)^{\frac{|P|(|P|+1)}{2}}  d_P\left(\frac{(1+(-1)^{|P|+|Q|})a+((-1)^{|P|}+(-1)^{|Q|+1})\varphi(a)}{2}\right) \cdot\\
&\cdot  B^A(1 \otimes S((x_P)_1) g^kx_Q (x_P)_2)\\
&=\underset{P}{\sum}(-1)^{\frac{|P|(|P|+1)}{2} +|P|(|Q|+1)}d_P\left(\varphi^{|P|+|Q|}(a)\right)B^A(1 \otimes S((x_P)_1) g^kx_Q (x_P)_2)\\
\overset{\eqref{useful2}}&{=}\varphi^{|Q|}(a)t_{k,Q}+\delta_{k1}\delta_{P \subseteq Q^c}\underset{P \neq \emptyset}{\sum}(-1)^{\frac{|P|(|P|+3)}{2}+S(P,P\sqcup Q)}2^{|P|}d_P\left(\varphi^{|P|+|Q|}(a)\right)t_{k,P \sqcup Q}\\
&=\varphi^{|Q|}(a)t_{k,Q}+\delta_{k1}\underset{\emptyset \neq P \subseteq Q^c}{\sum}(-1)^{\frac{|P|(|P|+3)}{2}+S(P,P\sqcup Q)}2^{|P|}d_P\left(\varphi^{|P|+|Q|}(a)\right)t_{k,P \sqcup Q},\\
\end{align*}
thus we have \eqref{B4E}. Clearly, if we assume \eqref{B4E} to hold, we find
\[a_0B^A(1 \otimes S(a_1)g^kx_Qa_2)=t_{k,Q}a=B^A(1 \otimes g^kx_Q)a,\]
just by moving to the end the first term in the sequence of equalities above. By $\Bbbk$-linearity of $B^A$ we conclude that \eqref{B4E} and \eqref{B1S1} are equivalent.

Finally, since $S$ is invertible, the cowreath $(A\otimes H^{op},H,\psi )$ is h-separable via the element $B$ if, and only if, it is separable and \eqref{B4S1} holds. Once again, by linearity of $B^A$, the last condition is equivalent to
\[B^A(1 \otimes g^jx_P)B^A(1 \otimes g^kx_Q)=B^A(1 \otimes g^jx_Pg^kx_Q)
\]
for $j,k=0,1$ and every $P,Q \subseteq \lbrace 1, \ldots, n \rbrace$. It is not hard to see that this equality is verified for every $Q$ if, and only if, it is verified for every $Q$ with cardinality $|Q| \leq 1$. 
When $|Q|=0$ and $k=0$ we get 
\[B^A(1 \otimes g^jx_P)B^A(1 \otimes 1)=B^A(1 \otimes g^jx_P),
\]
which holds in view of \eqref{B0E}. Then  \eqref{B4S1} is equivalent to
\[
\begin{cases}
t_{0,P}t_{1,\emptyset}&=(-1)^{|P|}t_{1,P}\\
t_{1,P}t_{1,\emptyset}&=(-1)^{|P|}t_{0,P}\\
t_{0,P}t_{0,\lbrace i \rbrace}&=\delta_{i \notin P}(-1)^{S(P,P \sqcup \lbrace i \rbrace)}t_{0,P \sqcup \lbrace i \rbrace}\\
t_{1,P}t_{0,\lbrace i \rbrace}&=\delta_{i \notin P}(-1)^{S(P,P \sqcup \lbrace i \rbrace)}t_{1,P \sqcup \lbrace i \rbrace}\\
t_{0,P}t_{1,\lbrace i \rbrace}&=\delta_{i \notin P}(-1)^{S(P,P \sqcup \lbrace i \rbrace)+|P|}t_{1,P \sqcup \lbrace i \rbrace}\\
t_{1,P}t_{1,\lbrace i \rbrace}&=\delta_{i \notin P}(-1)^{S(P,P \sqcup \lbrace i \rbrace)+|P|}t_{0,P \sqcup \lbrace i \rbrace}\\
\end{cases}
\]
for every $P,\lbrace i \rbrace \subseteq \lbrace 1, \ldots, n \rbrace$. When $P=\emptyset$ the second equality reads $(t_{1,\emptyset})^2=t_{0,\emptyset}=1$. Then, if we combine the first and the second equality we get
\[t_{0,P}=(-1)^{|P|}t_{1,P}t_{1,\emptyset}=t_{0,P}(t_{1,\emptyset})^2=t_{0,P},\]
which tell us that \eqref{B4S1} is also equivalent to
\[
\begin{cases}
t_{0,P}t_{1,\emptyset}&=(-1)^{|P|}t_{1,P}\\
(t_{1,\emptyset})^2&=1\\
t_{0,P}t_{0,\lbrace i \rbrace}&=\delta_{i \notin P}(-1)^{S(P,P \sqcup \lbrace i \rbrace)}t_{0,P \sqcup \lbrace i \rbrace}\\
t_{1,P}t_{0,\lbrace i \rbrace}&=\delta_{i \notin P}(-1)^{S(P,P \sqcup \lbrace i \rbrace)}t_{1,P \sqcup \lbrace i \rbrace}\\
t_{0,P}t_{1,\lbrace i \rbrace}&=\delta_{i \notin P}(-1)^{S(P,P \sqcup \lbrace i \rbrace)+|P|}t_{1,P \sqcup \lbrace i \rbrace}\\
t_{1,P}t_{1,\lbrace i \rbrace}&=\delta_{i \notin P}(-1)^{S(P,P \sqcup \lbrace i \rbrace)+|P|}t_{0,P \sqcup \lbrace i \rbrace}.\\
\end{cases}
\]
for every $P,\lbrace i \rbrace \subseteq \lbrace 1, \ldots, n \rbrace$.
Now we can use the first two conditions to rewrite the remaining four, so that everything boils down to
\begin{equation}\label{sys:5}
\begin{cases}
t_{1,P}&=(-1)^{|P|}t_{0,P}t_{1,\emptyset}\\
(t_{1,\emptyset})^2&=1\\
t_{0,P}t_{0,\lbrace i \rbrace}&=\delta_{i \notin P}(-1)^{S(P,P \sqcup \lbrace i \rbrace)}t_{0,P \sqcup \lbrace i \rbrace}\\
t_{1,\emptyset}t_{0,\lbrace i \rbrace}&=-t_{0,\lbrace i \rbrace}t_{1,\emptyset}.
\end{cases}
\end{equation}
for every $P,\lbrace i \rbrace \subseteq \lbrace 1, \ldots, n \rbrace$.
To conclude, we write \eqref{B4E} for $k=1$ and $a=t_{0,\lbrace i \rbrace}$. By means of \eqref{B2E} and \eqref{B3E} we find
\[t_{1,P}t_{0,\lbrace i \rbrace} =(-1)^{|P|}t_{0,\lbrace i \rbrace}t_{1,P}+\delta_{i \notin P}(-1)^{S(P,P \sqcup\lbrace i \rbrace)}2 t_{1,P \sqcup \lbrace i \rbrace}\]
and, thanks to the first equality in \eqref{sys:5},
\[t_{0,P}t_{1,\emptyset}t_{0,\lbrace i \rbrace} =t_{0,\lbrace i \rbrace}(-1)^{|P|}t_{0,P}t_{1,\emptyset}+\delta_{i \notin P}(-1)^{S(P,P \sqcup\lbrace i \rbrace)+1}2 t_{0,P \sqcup \lbrace i \rbrace}t_{1,\emptyset}.\]
If we put $P=\emptyset$ we find $t_{1,\emptyset}t_{0,\lbrace i \rbrace} =t_{0,\lbrace i \rbrace}t_{1,\emptyset}-2 t_{0,\lbrace i \rbrace}t_{1,\emptyset}$, which is the fourth equality in \eqref{sys:5}. On the other hand, if we multiply everything by $t_{1,\emptyset}$ and we use the second equality in \eqref{sys:5} we find
\[t_{0,P}t_{0,\lbrace i \rbrace} =t_{0,\lbrace i \rbrace}(-1)^{|P|+1}t_{0,P}+\delta_{i \notin P}(-1)^{S(P,P \sqcup\lbrace i \rbrace)}2 t_{0,P \sqcup \lbrace i \rbrace}.\]
Equality \eqref{B4E} for $k=0$ and $a=t_{0,\lbrace i \rbrace}$ gives $t_{0,P}t_{0,\lbrace i \rbrace}=(-1)^{|P|}t_{0,\lbrace i \rbrace}t_{0,P}$, thus the former condition rewrites as
 \[2t_{0,P}t_{0,\lbrace i \rbrace} =\delta_{i \notin P}(-1)^{S(P,P \sqcup\lbrace i \rbrace)}2 t_{0,P \sqcup \lbrace i \rbrace},\]
which is the third equality in \eqref{sys:5}.
\end{proof}

\section{The case of Clifford algebras}\label{Cliffordcase}

Let $A=Cl(\alpha,\beta_i, \gamma_i, \lambda_{ij})$ be a Clifford algebra endowed with the canonical $E(n)$-comodule algebra structure given by \eqref{canonical1}-\eqref{canonicalGXi}. We recall that the corresponding tuple of maps is given by $(\sigma, d_1, \ldots, d_n)$, where $\sigma$ is the main involution of $A$ and each $d_j$ is determined by $d_j(X_i)=-\delta_{ij}$ for every $i=0,1,\ldots, n$ (see Proposition~\ref{canonictuple}). 
In this case we can slightly rephrase \autoref{theo1}.
\begin{theorem}\label{theo2}
The cowreath $(A\otimes E(n)^{op},E(n),\psi )$ is rt-separable
if, and only if,
\begin{equation}\label{B0cl}
B^{A}\left(1 \otimes 1 \right) =1,
\end{equation}
\begin{equation}
B^A(g^jx_P \otimes g^kx_Q)=B^A(1 \otimes S(g^jx_P)g^kx_Q),  \label{B1cl}
\end{equation}%
\begin{equation}\label{B2cl}
\sigma(t_{k,Q})=(-1)^{k+|Q|}t_{k,Q},
\end{equation}
\begin{equation}\label{B3cl}
d_P(t_{k,Q})=\begin{cases}
(-1)^{S(P,Q)+\frac{|P|(|P|+2k+1)}{2}} t_{k,Q \setminus P} \quad &\textrm{if } P \subseteq Q, \\
0  \quad &\textrm{if } P \not\subseteq Q,
\end{cases}  
\end{equation}
\begin{equation} \label{B4cl}
t_{k,Q}G =(-1)^{|Q|}Gt_{k,Q},
\end{equation}
\begin{equation} \label{B5cl}
t_{k,Q}X_i =(-1)^{|Q|}X_it_{k,Q}+\delta_{k1}\delta_{i \notin Q}2(-1)^{S(Q,Q \sqcup \lbrace i \rbrace)} t_{k,Q \sqcup \lbrace i \rbrace}
\end{equation}
for $j,k=0,1$ and every $P,Q, \lbrace i \rbrace \subseteq \lbrace 1, \ldots, n\rbrace$.
Moreover it is rth-separable if, and only if, in addition,
\begin{equation}  \label{B6cl}
\begin{cases}
(t_{1,\emptyset})^2=1\\
t_{1,P}=(-1)^{|P|}t_{0,P}t_{1,\emptyset}.
\end{cases}
\end{equation}
for every $P \subseteq \lbrace 1, \ldots, n\rbrace$.
\end{theorem}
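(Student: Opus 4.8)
The plan is to obtain Theorem~\ref{theo2} as the specialization of Theorem~\ref{theo1} to the canonical coaction. By Proposition~\ref{canonictuple} this coaction corresponds to the tuple $(\sigma,d_1,\ldots,d_n)$, so Theorem~\ref{theo1} applies with $\varphi=\sigma$. Under this substitution the conditions \eqref{B0cl}, \eqref{B1cl}, \eqref{B2cl}, \eqref{B3cl} and \eqref{B6cl} are verbatim \eqref{B0E}, \eqref{B1E}, \eqref{B2E}, \eqref{B3E} and \eqref{B6S}, and nothing has to be shown for them. Hence the whole content of the theorem is the assertion that condition \eqref{B4E}, which a priori must hold for every $a\in A$, is equivalent to the two relations \eqref{B4cl} and \eqref{B5cl} involving only the generators $G$ and $X_i$.

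First I would reduce \eqref{B4E} to the generators, working with the equivalent form \eqref{B1S1}. In the proof of Theorem~\ref{theo1} the expression $a_0 B^A(1\otimes S(a_1)g^kx_Q a_2)$ is rewritten, as an identity, into the right-hand side of \eqref{B4E}; thus for each fixed $a$ the family \eqref{B4E} (over all $k,Q$) is equivalent to \eqref{B1S1} (over all $h$). I would then check that the set of $a$ satisfying \eqref{B1S1} is a unital subalgebra of $A$. It contains $1_A$ since $\rho(1_A)=1_A\otimes 1_H$, and if \eqref{B1S1} holds for $a$ and for $b$ then, using coassociativity of the iterated coaction $a\mapsto a_0\otimes a_1\otimes a_2$, the multiplicativity $\rho(ab)=a_0b_0\otimes a_1b_1$ and $S(a_1b_1)=S(b_1)S(a_1)$,
\begin{align*}
a_0 b_0 B^A\!\left(1\otimes S(a_1 b_1)\,h\,a_2 b_2\right)
&=a_0\Big[b_0 B^A\!\left(1\otimes S(b_1)\big(S(a_1)h a_2\big) b_2\right)\Big]\\
&=a_0 B^A\!\left(1\otimes S(a_1)h a_2\right) b
= B^A(1\otimes h)\,a\,b,
\end{align*}
which is \eqref{B1S1} for $ab$ (the inner step applies \eqref{B1S1} for $b$ with $h'=S(a_1)ha_2$, valid termwise by linearity in $h'$). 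Since $A$ is generated by $G,X_1,\ldots,X_n$, it follows that \eqref{B1S1}, hence \eqref{B4E}, holds for all $a$ if and only if it holds on each generator.

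It then remains to evaluate \eqref{B4E} on the generators. For $a=G$ one has $\sigma^{|Q|}(G)=(-1)^{|Q|}G$, while $d_P(G)=0$ for every $\emptyset\neq P\subseteq\{1,\ldots,n\}$ by Lemma~\ref{lemmader2} (as $P\not\subseteq\{0\}$ when $G=X_{\{0\}}$); the entire correction sum vanishes and \eqref{B4E} collapses to exactly \eqref{B4cl}. For $a=X_i$ one has $\sigma^{|Q|}(X_i)=(-1)^{|Q|}X_i$, and by Lemma~\ref{lemmader2} the only nonzero value $d_P(X_i)$ with $P\neq\emptyset$ is $d_{\{i\}}(X_i)=-1$, so only the term $P=\{i\}$ survives, and only when $i\notin Q$; collecting the resulting signs and rewriting them via Lemma~\ref{lemmacrucial}, \ref{corcompl}) in the form $S(\{i\},Q\sqcup\{i\})+S(Q,Q\sqcup\{i\})=|Q|$ turns \eqref{B4E} into precisely \eqref{B5cl}. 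The rth-separability statement \eqref{B6cl} coincides with \eqref{B6S}, so no further argument is needed. The steps requiring the most care are this final sign bookkeeping for $a=X_i$ and ensuring that the multiplicativity computation above genuinely closes the reduction to generators.
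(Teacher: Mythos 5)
Your proposal is correct and follows essentially the same route as the paper: the paper likewise observes that \eqref{B0cl}--\eqref{B3cl} and \eqref{B6cl} are unchanged from Theorem~\ref{theo1}, that \eqref{B4cl} and \eqref{B5cl} are just \eqref{B4E} evaluated on the generators $G$ and $X_i$, and that the reduction to generators follows because \eqref{B4E} is equivalent to \eqref{B1S1}, which is multiplicative in $a$ since $A$ is an $H$-comodule algebra. You merely spell out the subalgebra argument and the sign bookkeeping that the paper leaves implicit.
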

\begin{proof}
First of all notice that \eqref{B4cl} and \eqref{B5cl} are simply \eqref{B4E} on the generators $G$ and $X_i$ of the algebra $A$. Then the statement simply follows once we have shown that \eqref{B4E} is multiplicative with respect to $a$. To this aim it is enough to remember that \eqref{B4E} is equivalent to \eqref{B1S1}, which is clearly multiplicative with respect to $a$, since $A$ is an  $H$-comodule algebra.
\end{proof}

\subsection{The elements \texorpdfstring{$t_{0, Q}$}{}}\label{sec:t0Q}
Let us determine the exact form of the elements $t_{0,Q}$ for $|Q| \leq 1$. Clearly $t_{0,\emptyset}=B^A(1 \otimes 1)\overset{\eqref{B0cl}}{=}1$. Let $Q=\lbrace j \rbrace$. Then \eqref{B3cl} becomes 
\[
d_P(t_{0,\lbrace j \rbrace})=
\begin{cases}
-1 \quad &\textrm{for } P= \lbrace j \rbrace\\
0 \quad &\textrm{for all } P\subseteq \lbrace 1, \ldots n \rbrace, \ P \neq \emptyset, \lbrace j \rbrace
\end{cases}
\]
and in particular we have that $t_{0,\lbrace j \rbrace} \in \ker d_i$ for every $i=1, \ldots, n$, $i \neq j$. By Lemma~\ref{kerder}, this implies
\[t_{0,\lbrace j \rbrace}=\eta_{1j}1+\eta_{2j}G+\eta_{3j}X_j+\eta_{4j}GX_j,\]
while $d_j(t_{0,\lbrace j \rbrace})=-1$ forces $\eta_{3j}=1$ and $\eta_{4j}=0$. Also, equality \eqref{B2cl} yields $\eta_{1j}=0$, thus $t_{0,\lbrace j \rbrace}=\eta_{\lbrace j \rbrace}G+X_j$ for some $\eta_{\lbrace j \rbrace} \in \Bbbk$.
Now \eqref{B4cl} reads
\[\eta_{\lbrace j \rbrace}\alpha+X_jG=-\eta_{\lbrace j \rbrace}\alpha-GX_j,\]
that is $2\eta_{\lbrace j \rbrace} \alpha+\gamma_j=0$,
while \eqref{B5cl} becomes
\[\eta_{\lbrace j \rbrace}GX_i+X_jX_i=-\eta_{\lbrace j \rbrace}X_iG-X_iX_j,\]
i.e. $\eta_{\lbrace j \rbrace}\gamma_i+\lambda_{ij}=0$ if $i \neq j$ and $\eta_{\lbrace j \rbrace}\gamma_j+2\beta_j=0$ otherwise. In conclusion we must have $
t_{0,\lbrace j \rbrace}=\eta_{\lbrace j \rbrace}G+X_j$ with
\begin{equation}\label{t01}
\begin{cases}
\gamma_j+2 \alpha \eta_{\lbrace j \rbrace}=0\\
\lambda_{ij}+\eta_{\lbrace j \rbrace}\gamma_i=0\\
2\beta_j+\eta_{\lbrace j \rbrace}\gamma_j=0
\end{cases}
\quad \textrm{for every } i,j=1, \ldots, n, \ i \neq j.
\end{equation}
Notice that this agrees with the result proved by Menini and Torrecillas in \cite[Thm. 6.1]{mt} when $n=1$. In that case $B^A(1 \otimes x)=\eta G+X$ with $\eta \in \Bbbk$ satisfying $\gamma+2 \alpha \eta=0$ and $2\beta+\eta\gamma=0$.

\begin{proposition}\label{propt0Q}
Let $X_0=G$, $\overline{Q}:=Q \sqcup \lbrace 0 \rbrace$ and $t_{0,\emptyset}=1$. Then $t_{0,Q}$ satisfies \eqref{B2cl} and \eqref{B3cl} for every $Q \subseteq \lbrace 1, \ldots, n \rbrace$ if, and only if,
\begin{equation}\label{t0Q}
t_{0,Q}=\sum_{\underset{|R| \equiv_2 |Q|}{R \subseteq \overline{Q}}}(-1)^{S(Q \setminus R, Q)}\eta_{Q \setminus R} X_R
\end{equation}
where $\eta_{\emptyset}=1$ and $\eta_{S} \in \Bbbk$ for every $S \neq \emptyset$.
\end{proposition}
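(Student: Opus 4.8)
The plan is to prove the two implications together, isolating a single congruence between the sign functions $S(-,-)$ that powers both directions. Throughout I expand $t_{0,Q}=\sum_{R\subseteq\{0,1,\dots,n\}}c_RX_R$ and use two facts repeatedly: $\sigma(X_R)=(-1)^{|R|}X_R$ (since $\sigma$ is the main involution), and the formula $d_P(X_R)=\delta_{P\subseteq R}(-1)^{S(P,R)+\frac{|P|(|P|+1)}{2}}X_{R\setminus P}$ of Lemma~\ref{lemmader2}. The two \emph{easy} parts of the direction ``\eqref{t0Q} $\Rightarrow$ conditions'' are then immediate: applying $\sigma$ to \eqref{t0Q} multiplies each surviving term by $(-1)^{|R|}=(-1)^{|Q|}$, giving \eqref{B2cl}, while for $P\not\subseteq Q$ no $R\subseteq\overline{Q}$ contains $P$ (as $0\notin P$), so $d_P$ annihilates \eqref{t0Q}, giving the vanishing half of \eqref{B3cl}.

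The substantive case of this direction is $P\subseteq Q$. Here I apply $d_P$ term by term, keep only the terms with $P\subseteq R$, and reindex by $R'=R\setminus P$ (so $R=R'\sqcup P$). This sets up a bijection between the $P\subseteq R$ part of the index set of $t_{0,Q}$ and the full index set of $t_{0,Q\setminus P}$: indeed $R'\subseteq\overline{Q\setminus P}$, the parity $|R|\equiv_2|Q|$ becomes $|R'|\equiv_2|Q\setminus P|$, and $Q\setminus R=(Q\setminus P)\setminus R'$, so the scalars match, $\eta_{Q\setminus R}=\eta_{(Q\setminus P)\setminus R'}$. Comparing with $(-1)^{S(P,Q)+\frac{|P|(|P|+1)}{2}}t_{0,Q\setminus P}$ then reduces \eqref{B3cl} to the sign congruence stated below.

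For the converse I argue by induction on $|Q|$, treating the sets in increasing order of size; the base case $Q=\emptyset$ is \eqref{B0cl}. Condition \eqref{B3cl} with the singletons $P=\{i\}$, $i\notin Q$, places $t_{0,Q}$ in $\bigcap_{i\notin Q}\ker d_i$, so Corollary~\ref{kerder} confines its support to $\{X_R:R\subseteq\overline{Q}\}$, and \eqref{B2cl} removes the terms with $|R|\not\equiv_2|Q|$. For each remaining $R$ with $R\cap Q\neq\emptyset$ I pick $i\in R\cap Q$ and compare the $X_{R\setminus\{i\}}$-coefficients on the two sides of \eqref{B3cl} for $P=\{i\}$; since $(Q\setminus\{i\})\setminus(R\setminus\{i\})=Q\setminus R$ and $|Q\setminus R|<|Q|$, the inductive form of $t_{0,Q\setminus\{i\}}$ forces $c_R=(-1)^{S(Q\setminus R,Q)}\eta_{Q\setminus R}$ with the already-constructed scalar $\eta_{Q\setminus R}$ --- again modulo the same sign congruence. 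The only unconstrained coefficients are those with $R\cap Q=\emptyset$, i.e. $R\in\{\emptyset,\{0\}\}$ (one of them selected by the parity), which correspond to $Q\setminus R=Q$; I simply \emph{define} $\eta_Q$ to be this free coefficient, consistent with \eqref{t0Q} because $S(Q,Q)=0$.

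Both directions therefore rest on the congruence
\[
S(Q\setminus R,\,Q)+S(P,R)\ \equiv\ S(P,Q)+S\bigl((Q\setminus P)\setminus R',\,Q\setminus P\bigr)\pmod 2,
\]
where $R=R'\sqcup P$, $P\subseteq Q\cap R$ and $R\subseteq\overline{Q}$; this is the main obstacle. Writing $S:=Q\setminus R$ and rearranging, it reads $S(S,Q)-S(S,Q\setminus P)\equiv S(P,Q)-S(P,R)$. I expect to prove it by reading $S(F,P)$ as the inversion count $\#\{(a,b):a\in F,\,b\in P\setminus F,\,a>b\}$ --- the integer underlying \eqref{reorder} --- and evaluating the left-hand side through Lemma~\ref{lemmacrucial}\,\ref{LemmaSF2} and \ref{lemmaPi}. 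Both sides then collapse to inversion counts between $S$ and $P$, which sum to $|S||P|$, so the two differ by $|P||S|+|P|\,\delta_{0\in R}$ modulo $2$. The delicate point --- and the reason the extra generator $X_0=G$ cannot be ignored --- is that the parity constraint $|R|\equiv_2|Q|$ is exactly what forces $0\in R\iff|S|$ is odd, i.e. $\delta_{0\in R}\equiv|S|\pmod 2$; this makes the discrepancy $|P|(|S|+\delta_{0\in R})\equiv 2|P||S|\equiv 0$ and closes the argument. Everything else is routine bookkeeping with Lemma~\ref{lemmader2} and Corollary~\ref{kerder}.
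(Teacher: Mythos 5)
Your proposal is correct and follows essentially the same route as the paper: the forward direction is the same reindexing $R'=R\setminus P$ reduced to a sign identity that the paper establishes by chaining Lemma~\ref{lemmacrucial}~\emph{\ref{lemmaFPF})} and \emph{\ref{corcompl})} (and which your inversion-count argument, including the key observation that $|R|\equiv_2|Q|$ forces $\delta_{0\in R}\equiv|Q\setminus R|\pmod 2$, proves correctly), and the converse is the same induction on $|Q|$ using Corollary~\ref{kerder} and \eqref{B2cl} to restrict the support and the derivations to pin down the coefficients against $t_{0,Q\setminus\{i\}}$. The only deviations are organizational: you determine each coefficient $c_R$ directly by one singleton derivation $d_i$, $i\in R\cap Q$, whereas the paper adjoins one element of $Q$ at a time via the antiderivative formalism of Proposition~\ref{propint} and then fixes the residual coefficients with multi-index derivations $d_{T_0}$; both are valid and yours is, if anything, slightly leaner.
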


\begin{proof}
We start by proving that each $t_{0,Q}$ defined by \eqref{t0Q} satisfies \eqref{B2cl} and \eqref{B3cl}. First, we see that
\begin{align*}
\sigma(t_{0,Q})&=\sum_{\underset{|R| \equiv_2 |Q|}{R \subseteq \overline{Q}}}(-1)^{S(Q \setminus R, Q)}\eta_{Q \setminus R} \sigma(X_R)\\
&=\sum_{\underset{|R| \equiv_2 |Q|}{R \subseteq \overline{Q}}}(-1)^{S(Q \setminus R, Q)}\eta_{Q \setminus R} (-1)^{|R|}X_R\\
&=\sum_{\underset{|R| \equiv_2 |Q|}{R \subseteq \overline{Q}}}(-1)^{S(Q \setminus R, Q)}\eta_{Q \setminus R} (-1)^{|Q|}X_R\\
&=(-1)^{|Q|}\sum_{\underset{|R| \equiv_2 |Q|}{R \subseteq \overline{Q}}}(-1)^{S(Q \setminus R, Q)}\eta_{Q \setminus R} X_R\\
&=(-1)^{|Q|}t_{0,Q}
\end{align*}
so that \eqref{B2cl} holds. Then observe that for every $P \subseteq \lbrace 1, \ldots, n\rbrace$ we have 
\[d_P(t_{0,Q})=\sum_{\underset{|R| \equiv_2 |Q|}{R \subseteq \overline{Q}}}(-1)^{S(Q \setminus R, Q)}\eta_{Q \setminus R} d_P(X_R)\] which vanishes, by Lemma~\ref{lemmader2}, if $P \not\subseteq Q$. On the other hand, if $P \subseteq Q$ we have
\begin{align*}
d_P(t_{0,Q})&=\sum_{\underset{|R| \equiv_2 |Q|}{R \subseteq \overline{Q}}}(-1)^{S(Q \setminus R, Q)}\eta_{Q \setminus R} d_P(X_R)\\\overset{Lem.~\ref{lemmader2}}&{=}\sum_{\underset{|R| \equiv_2 |Q|}{P \subseteq R \subseteq \overline{Q}}}(-1)^{S(Q \setminus R, Q)}\eta_{Q \setminus R} (-1)^{S(P,R)+\frac{|P|(|P|+1)}{2}}X_{R\setminus P}\\
&=\sum_{\underset{|R'| \equiv_2 |Q\setminus P|}{R' \subseteq \overline{Q}\setminus P}}(-1)^{S(Q \setminus (R'\sqcup P), Q)+S(P,R' \sqcup P) +\frac{|P|(|P|+1)}{2}}\eta_{Q \setminus (R'\sqcup P)}X_{R'}\\
\overset{Lem.~\ref{lemmacrucial}, \ \ref{lemmaFPF})}&{=}\sum_{\underset{|R'| \equiv_2 |Q\setminus P|}{R' \subseteq \overline{Q}\setminus P}}(-1)^{S(Q \setminus (R'\sqcup P), Q \setminus R')+S(Q \setminus R',Q) +\frac{|P|(|P|+1)}{2}}\eta_{Q \setminus (R'\sqcup P)}X_{R'}\\
\overset{Lem.~\ref{lemmacrucial}, \ \ref{corcompl})}&{=}\sum_{\underset{|R'| \equiv_2 |Q\setminus P|}{R' \subseteq \overline{Q}\setminus P}}(-1)^{S(P, Q \setminus R')+|P|(|Q|-|R'|-|P|)+S(Q \setminus R',Q) +\frac{|P|(|P|+1)}{2}}\eta_{Q \setminus (R'\sqcup P)}X_{R'}\\
&=\sum_{\underset{|R'| \equiv_2 |Q\setminus P|}{R' \subseteq \overline{Q}\setminus P}}(-1)^{S(P, Q \setminus R')+S(Q \setminus R',Q) +\frac{|P|(|P|+1)}{2}}\eta_{Q \setminus (R'\sqcup P)}X_{R'}\\
\overset{Lem.~\ref{lemmacrucial}, \ \ref{lemmaFPF})}&{=}\sum_{\underset{|R'| \equiv_2 |Q\setminus P|}{R' \subseteq \overline{Q}\setminus P}}(-1)^{S(Q \setminus (R' \sqcup P), Q \setminus P)+S(P,Q) +\frac{|P|(|P|+1)}{2}}\eta_{Q \setminus (R'\sqcup P)}X_{R'}\\
&=(-1)^{S(P,Q) +\frac{|P|(|P|+1)}{2}}t_{0, Q \setminus P},
\end{align*}
so that \eqref{B3cl} is verified.

Now let us show that if every $t_{0,Q}$ satisfies \eqref{B2cl} and \eqref{B3cl}, then they must be of the form \eqref{t0Q}. The proof is by induction on $|Q|$. If $Q=\emptyset$ we have
\[t_{0,\emptyset}=1=(-1)^{S(\emptyset, \emptyset)}\eta_{\emptyset} X_{\emptyset}\]
and the statement holds true.
If $Q=\lbrace j \rbrace$, then
\begin{align*}
t_{0,Q}&=\sum_{\underset{|R| \ odd}{R \subseteq \lbrace 0,j \rbrace}}(-1)^{S(\lbrace j \rbrace \setminus R, \lbrace j \rbrace)}\eta_{\lbrace j \rbrace \setminus R} X_R\\
&=(-1)^{S(\lbrace j \rbrace, \lbrace j \rbrace)}\eta_{\lbrace j \rbrace} X_0+(-1)^{S(\emptyset, \lbrace j \rbrace)}\eta_{\emptyset} X_j\\
&=\eta_{\lbrace j \rbrace}G+X_j
\end{align*}
which again is true (see the beginning of this subsection).
Now suppose \eqref{t0Q} holds true for any $Q \subseteq \lbrace 1, \ldots, n \rbrace$ with cardinality $|Q|=m$ for a fixed $1 \leq m \leq n$. Consider $Q'=\lbrace i_1 < i_2 < \ldots <i_m < i_{m+1} \rbrace\subseteq \lbrace 1, \ldots, n \rbrace$ with cardinality $m+1$ and split it as $Q'=Q \sqcup \lbrace i_{m+1} \rbrace$, where $Q=\lbrace i_1 < i_2 < \ldots <i_m \rbrace$.
Then \eqref{B3cl} tells us that $d_{i_{m+1}}(t_{0,Q'})=(-1)^{|Q|+1}t_{0,Q}$, i.e. that $(-1)^{|Q|+1}t_{0,Q'} \in \int t_{0,Q} \ dX_{i_{m+1}}$. Hence we can use our induction hypothesis and, thanks to Proposition~\ref{propint}, we can write
\begin{align*}
(-1)^{|Q|+1}t_{0,Q'} \in \sum_{\underset{|R| \equiv_2 |Q|}{R \subseteq \overline{Q}}}(-1)^{S(Q \setminus R, Q)}\eta_{Q \setminus R} \int X_R \ d X_{i_{m+1}}
\end{align*}
and
\[(-1)^{|Q|+1}t_{0,Q'}= \sum_{\underset{|R| \equiv_2 |Q|}{R \subseteq \overline{Q}}}(-1)^{S(Q \setminus R, Q)}\eta_{Q \setminus R}\left[(-1)^{|R|+1}X_{R}X_{i_{m+1}}+\sum_{T \not\ni i_{m+1}} \mu_TX_T\right],\]
which (after a suitable renaming of the $\mu_T$'s) can be rewritten as
\[t_{0,Q'}= \sum_{\underset{|R| \equiv_2 |Q|}{R \subseteq \overline{Q}}}(-1)^{S(Q \setminus R, Q)}\eta_{Q \setminus R}X_{R}X_{i_{m+1}}+\sum_{T \not\ni i_{m+1}} \mu_TX_T.\]
Since by \eqref{B3cl} we know that $d_P(t_{0,Q'})=0$ for every $P \not\subseteq Q'$, we can immediately deduce that $\mu_T$=0 for every $T \not \subseteq \overline{Q}$, while \eqref{B2cl} forces $\mu_T=0$ for every $T$ with $|T| \equiv |Q|$. Hence we find
\begin{align*}
t_{0,Q'}&= \sum_{\underset{|R| \equiv_2 |Q|}{R \subseteq \overline{Q}}}(-1)^{S(Q \setminus R, Q)}\eta_{Q \setminus R}X_{R}X_{i_{m+1}}+\sum_{\underset{|T| \equiv_2 |Q|+1}{T \subseteq \overline{Q}}} \mu_TX_T\\
&= \sum_{\underset{|R'| \equiv_2 |Q'|}{i_{m+1} \in R' \subseteq \overline{Q'}}}(-1)^{S(Q' \setminus R', Q')}\eta_{Q' \setminus R'}X_{R'}+\sum_{\underset{|T| \equiv_2 |Q'|}{i_{m+1} \notin T \subseteq \overline{Q'}}} \mu_TX_T.
\end{align*}
To conclude, it is enough to prove that $\mu_T=(-1)^{S(Q'\setminus T, Q')}\eta_{Q'\setminus T}$ for every $T \subseteq \overline{Q}$ with $|T| \equiv_2 |Q|+1$. Fix a $T_0 \subseteq \overline{Q}$ with $|T_0| \equiv_2 |Q|+1$.
If we apply $d_{T_0}$ on the last equality we find
\begin{align*}
d_{T_0}(t_{0,Q'})&= \sum_{\underset{|R'| \equiv_2 |Q'|}{i_{m+1} \in R' \subseteq \overline{Q'}}}(-1)^{S(Q' \setminus R', Q')}\eta_{Q' \setminus R'}d_{T_0}(X_{R'})+\sum_{\underset{|T| \equiv_2 |Q'|}{i_{m+1} \notin T \subseteq \overline{Q'}}} \mu_Td_{T_0}(X_T)\\
\overset{Lem.~\ref{lemmader2}}&{=} (-1)^{\frac{|T_0|(|T_0|+1)}{2}}\mu_{T_0}+ \underset{\textrm{ terms with higher degree}}{\ldots}.
\end{align*}
On the other hand, by means of \eqref{B3cl}, we get
\begin{align*}
d_{T_0}(t_{0,Q'})&=(-1)^{S(T_0,Q')+\frac{|T_0|(|T_0|+1)}{2}}t_{0,Q'\setminus T_0}\\
\overset{Ind. \ hyp.}&{=}(-1)^{S(T_0,Q')+\frac{|T_0|(|T_0|+1)}{2}}\sum_{\underset{|R| \ even}{R \subseteq \overline{Q'}\setminus T_0}}(-1)^{S(Q' \setminus (R \sqcup T_0), Q'\setminus T_0)}\eta_{Q' \setminus (R \sqcup T_0)} X_R
\end{align*}
and the coefficient of the zero-degree term in this sum is $(-1)^{S(T_0,Q')+\frac{|T_0|(|T_0|+1)}{2}}\eta_{Q' \setminus T_0}$.
Therefore we must have
\begin{align*}
(-1)^{\frac{|T_0|(|T_0|+1)}{2}}\mu_{T_0}&=(-1)^{S(T_0,Q')+\frac{|T_0|(|T_0|+1)}{2}}\eta_{Q' \setminus T_0}\\
\overset{Lem.~\ref{lemmacrucial}, \ \ref{corcompl})}&{=}(-1)^{S(Q'\setminus T_0,Q')+|T_0|(|Q'|-|T_0|)+\frac{|T_0|(|T_0|+1)}{2}}\eta_{Q' \setminus T_0},
\end{align*}
that is $\mu_{T_0}=(-1)^{S(Q' \setminus T_0,Q')}\eta_{Q' \setminus T_0}$, thanks to the fact that $|T_0| \equiv_2 |Q'|$.
\end{proof}

\begin{proposition}\label{propt0Q2}
The elements $t_{0,Q}$ defined by \eqref{t0Q} satisfy \eqref{B4cl} and \eqref{B5cl} for every $Q \subseteq \lbrace 1, \ldots, n \rbrace$ and every $i=1, \ldots, n$ if, and only if, either
\begin{itemize}
\item $A=Cl(0,0,0,0)$ \\
or
\item $A=Cl\left(\alpha,\frac{\gamma_i^2}{4\alpha}, \gamma_i, \frac{\gamma_i\gamma_j}{2 \alpha}\right)$ with $\alpha \neq 0$ and $\eta_T =\sum_{i \in T}(-1)^{S(\lbrace i \rbrace, T)+1}\eta_{T \setminus  \lbrace i \rbrace}\frac{\gamma_i}{2\alpha}$ for every $T \subseteq Q$, with $|T|$ odd.
\end{itemize}
\end{proposition}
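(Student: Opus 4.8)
The plan is to first notice that, since every element in play is a value $t_{0,Q}=B^A(1\otimes x_Q)$, we only ever deal with the case $k=0$, and there both \eqref{B4cl} and \eqref{B5cl} collapse to a single requirement: because $\delta_{k1}=0$, they say exactly that $t_{0,Q}$ graded-commutes with each generator, $t_{0,Q}X_m=(-1)^{|Q|}X_mt_{0,Q}$ for all $m\in\{0,1,\dots,n\}$ (with $X_0=G$). I would then dispose of $|Q|\le 1$ by hand, as in the computation preceding the statement: writing $t_{0,\{j\}}=\eta_{\{j\}}G+X_j$ and imposing graded-commutation with $G$ and with each $X_i$ gives $2\alpha\eta_{\{j\}}+\gamma_j=0$, $\gamma_i\eta_{\{j\}}+\lambda_{ij}=0\ (i\ne j)$ and $\gamma_j\eta_{\{j\}}+2\beta_j=0$. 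This already produces the dichotomy: if $\alpha=0$ the first equation forces every $\gamma_j=0$, whence $\lambda_{ij}=0$ and $\beta_j=0$, i.e. $A=Cl(0,0,0,0)$; if $\alpha\ne 0$ it forces $\eta_{\{j\}}=-\gamma_j/2\alpha$, $\beta_j=\gamma_j^2/4\alpha$ and $\lambda_{ij}=\gamma_i\gamma_j/2\alpha$, which is the second algebra together with the $|T|=1$ instance of the recursion.

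For the trivial algebra $Cl(0,0,0,0)$ I would check the conditions hold for \emph{every} choice of the $\eta_S$: all structure constants vanish, so for any $R\subseteq\overline{Q}$ one has $X_RX_m=(-1)^{|R|}X_mX_R=(-1)^{|Q|}X_mX_R$ when $m\notin R$, whereas $X_RX_m$ and $X_mX_R$ both vanish when $m\in R$ (because $X_m^2=0$ and there are no $\lambda$-corrections). Summing over the terms of \eqref{t0Q} yields the graded-commutation with no constraint on the coefficients, which matches the first alternative of the statement.

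The heart of the matter is the non-degenerate case. Here I would introduce the shifted generators $Y_0:=G$ and $Y_i:=X_i-\tfrac{\gamma_i}{2\alpha}G$ $(i\ge1)$ and verify, using $\beta_i=\gamma_i^2/4\alpha$ and $\lambda_{ij}=\gamma_i\gamma_j/2\alpha$, the clean relations $Y_i^2=0\ (i\ge1)$, $Y_0^2=\alpha$ and $Y_mY_{m'}=-Y_{m'}Y_m$ for $m\ne m'$. Two facts then drive the argument. First, for any $R\subseteq Q$ (so $0\notin R$) with $|R|\equiv_2|Q|$ the ordered product $Y_R$ graded-commutes with every generator $X_m$ with sign $(-1)^{|Q|}$, while a product $Y_R$ with $0\in R$ graded-commutes with $G$ with the \emph{wrong} sign, since $Y_RY_0=(-1)^{|R|-1}Y_0Y_R$. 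Second, expanding $t_{0,Q}$ in the basis $\{Y_R:R\subseteq\overline{Q},\,|R|\equiv_2|Q|\}$ and using $Y_0^2=\alpha\ne0$ together with $\operatorname{char}\Bbbk\ne2$, a one-line computation shows \eqref{B4cl} is equivalent to the vanishing of all $Y_R$-coefficients with $0\in R$; once these vanish, $t_{0,Q}$ lies in the span of the $Y_R$ with $R\subseteq Q$, and \eqref{B5cl} holds automatically. Thus, in the non-degenerate case, \eqref{B4cl} and \eqref{B5cl} together are equivalent to \eqref{B4cl} alone, and it remains to match the latter with the recursion.

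Finally I would compute $t_{0,Q}G-(-1)^{|Q|}Gt_{0,Q}$ directly in the $X$-basis via \eqref{XRXj}: the only surviving contributions are the $\lambda_{l0}=\gamma_l$ correction terms and the terms $X_0X_R=\alpha X_{R\setminus\{0\}}$ arising from $R\ni0$. Collecting the coefficient of a basis element $X_{Q\setminus T}$ with $0\notin Q\setminus T$ (equivalently $|T|$ odd) yields precisely $2\alpha\,\eta_T=-\sum_{l\in T}(-1)^{S(\{l\},T)}\gamma_l\,\eta_{T\setminus\{l\}}$, which is the stated recursion; the coefficients with $0\in Q\setminus T$ (so $|T|$ even) give $\sum_{l\in T}(-1)^{S(\{l\},T)}\gamma_l\eta_{T\setminus\{l\}}=0$, and I would show this is a formal consequence of the odd-case recursion by substituting each $\eta_{T\setminus\{l\}}$ and observing that the $(l,i)$ and $(i,l)$ terms of the resulting double sum cancel. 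The one genuinely delicate point — the step I expect to be the main obstacle — is the parity bookkeeping needed to turn the raw exponent $S(Q\setminus T,(Q\setminus T)\sqcup\{l\})+S(T\setminus\{l\},Q)$ into $S(\{l\},T)+1+|Q|+S(T,Q)\pmod2$. Via Lemma~\ref{lemmacrucial} this reduces to the identity $m_{<l}+S(\{l\},Q)+t_{<l}\equiv0\pmod2$, where $m_{<l}$ and $t_{<l}$ count the elements of $Q\setminus T$, respectively $T$, smaller than $l$, and this follows at once from $S(\{l\},Q)=m_{<l}+t_{<l}$ since $Q=(Q\setminus T)\sqcup T$.
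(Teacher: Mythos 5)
Your proposal is correct, and it reaches the same dichotomy and the same recursion for the $\eta_T$'s as the paper, but the way you dispose of the non-degenerate case is genuinely different. The paper expands $t_{0,Q}X_i+(-1)^{|Q|+1}X_it_{0,Q}$ for \emph{every} generator $i=0,1,\ldots,n$ using \eqref{XRXj} and \eqref{betacoroll}, arrives at a system of equations indexed by $i$ (its \eqref{sys:2}), extracts the dichotomy from the $|T|=1$ equations, and then verifies that in the case $\alpha\neq 0$ all equations with $i\neq 0$ are formal consequences of the $i=0$ ones. You instead pass to the shifted generators $Y_i=X_i-\frac{\gamma_i}{2\alpha}G$, for which the relations become $Y_i^2=0$, $Y_0^2=\alpha$, $Y_mY_{m'}=-Y_{m'}Y_m$, and observe that graded-commutation with $G$ alone already forces the $Y_0$-component of $t_{0,Q}$ to vanish (using $\alpha\neq0$ and $\operatorname{char}\Bbbk\neq2$), after which graded-commutation with every $X_i$ is automatic; this replaces the paper's redundancy check on the full system by an a priori argument, and it cleanly explains \emph{why} only the $i=0$ condition matters. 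The remaining ingredients coincide: the extraction of the recursion $2\alpha\eta_T=-\sum_{l\in T}(-1)^{S(\{l\},T)}\gamma_l\eta_{T\setminus\{l\}}$ from the coefficient of $X_{Q\setminus T}$ with $|T|$ odd is exactly the $i=0$ instance of the paper's computation, your parity identity is the same bookkeeping the paper performs via Lemma~\ref{lemmacrucial}, and the cancellation of the $(l,i)$ and $(i,l)$ terms showing that the even-$|T|$ identity follows from the odd-$|T|$ recursion is precisely the paper's closing argument. The only blemish is notational: for $|T|$ even you speak of ``coefficients with $0\in Q\setminus T$'', which should read ``the coefficient of $X_{\{0\}\sqcup(Q\setminus T)}$'' since $0\notin Q$; this does not affect the argument.
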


\begin{proof}
Condition \eqref{B4cl} can be rewritten as $t_{0,Q}X_0+(-1)^{|Q|+1}X_0t_{0,Q}=0$, therefore \eqref{B4cl} and \eqref{B5cl} together are equivalent to 
\begin{equation}\label{cond1}
t_{0,Q}X_i+(-1)^{|Q|+1}X_it_{0,Q}=0 \quad \textrm{ for every } i=0,1,\ldots, n.
\end{equation}
Observe that, for any of these values of $i$, we have
\begin{align*}
&t_{0,Q}X_i+(-1)^{|Q|+1}X_it_{0,Q}\\
&=\sum_{\underset{|R| \equiv_2 |Q|}{R \subseteq \overline{Q}}}(-1)^{S(Q \setminus R, Q)}\eta_{Q \setminus R} (X_RX_i+(-1)^{|Q|+1}X_iX_R)\\
&=\sum_{\underset{|R| \equiv_2 |Q|}{i \in R \subseteq \overline{Q}}}(-1)^{S(Q \setminus R, Q)}\eta_{Q \setminus R} (X_RX_i+(-1)^{|Q|+1}X_iX_R)+\\
&+\sum_{\underset{|R| \equiv_2 |Q|}{i \notin R \subseteq \overline{Q}}}(-1)^{S(Q \setminus R, Q)}\eta_{Q \setminus R} (X_RX_i+(-1)^{|Q|+1}X_iX_R)\\
\overset{\eqref{XRXj}}&{=}\sum_{\underset{|R| \equiv_2 |Q|}{i \in R \subseteq \overline{Q}}}(-1)^{S(Q \setminus R, Q)}\eta_{Q \setminus R} \left(2X_RX_i+\sum_{j \in R \setminus \lbrace i \rbrace} (-1)^{S(R \setminus \lbrace i,j \rbrace,R \setminus \lbrace i \rbrace)+1}\lambda_{ij} X_{R \setminus \lbrace j \rbrace}\right)+\\
&+\sum_{\underset{|R| \equiv_2 |Q|}{i \notin R \subseteq \overline{Q}}}(-1)^{S(Q \setminus R, Q)}\eta_{Q \setminus R} \sum_{j \in R} (-1)^{S(R \setminus \lbrace j \rbrace,R)}\lambda_{ij} X_{R \setminus \lbrace j \rbrace}\\
\overset{\eqref{betacoroll}}&{=}\sum_{\underset{|R| \equiv_2 |Q|}{i \in R \subseteq \overline{Q}}}2(-1)^{S(Q \setminus R, Q)}\eta_{Q \setminus R} \left(\sum_{\underset{j \in R}{j>i}} (-1)^{S(R \setminus \lbrace j \rbrace,R)}\lambda_{ij} X_{R \setminus \lbrace j \rbrace} +(-1)^{S(R \setminus \lbrace i \rbrace,R)}\beta_i X_{R \setminus \lbrace i \rbrace}\right)+\\
&+\sum_{\underset{|R| \equiv_2 |Q|}{i \in R \subseteq \overline{Q}}}(-1)^{S(Q \setminus R, Q)}\eta_{Q \setminus R} \left(\sum_{j \in R \setminus \lbrace i \rbrace} (-1)^{S(R \setminus \lbrace i,j \rbrace,R \setminus \lbrace i \rbrace)+1}\lambda_{ij} X_{R \setminus \lbrace j \rbrace}\right)+\\
&+\sum_{\underset{|R| \equiv_2 |Q|}{i \notin R \subseteq \overline{Q}}}(-1)^{S(Q \setminus R, Q)}\eta_{Q \setminus R} \sum_{j \in R} (-1)^{S(R \setminus \lbrace j \rbrace,R)}\lambda_{ij} X_{R \setminus \lbrace j \rbrace}.
\end{align*}
Since for any pair of distinct $i$ and $j$
\[S(R\setminus \lbrace i,j \rbrace,R\setminus \lbrace i \rbrace)\overset{Lem.\ref{lemmacrucial}, \ \ref{lemmaFPF})}{=}S(R\setminus \lbrace j \rbrace,R)-S(\lbrace i \rbrace,R)+S(\lbrace i \rbrace,R \setminus \lbrace j \rbrace)=S(R\setminus \lbrace j \rbrace,R)-\delta_{j<i},
\]
we obtain
\begin{align*}
&t_{0,Q}X_i+(-1)^{|Q|+1}X_it_{0,Q}\\
&=\sum_{\underset{|R| \equiv_2 |Q|}{i \in R \subseteq \overline{Q}}}(-1)^{S(Q \setminus R, Q)}\eta_{Q \setminus R} 2\left(\sum_{\underset{j \in R}{j>i}} (-1)^{S(R \setminus \lbrace j \rbrace,R)}\lambda_{ij} X_{R \setminus \lbrace j \rbrace} +(-1)^{S(R \setminus \lbrace i \rbrace,R)}\beta_i X_{R \setminus \lbrace i \rbrace}\right)+\\
&+\sum_{\underset{|R| \equiv_2 |Q|}{i \in R \subseteq \overline{Q}}}(-1)^{S(Q \setminus R, Q)}\eta_{Q \setminus R} \left(\sum_{j \in R \setminus \lbrace i \rbrace} (-1)^{S(R\setminus \lbrace j \rbrace,R)-\delta_{j<i}+1}\lambda_{ij} X_{R \setminus \lbrace j \rbrace}\right)+\\
&+\sum_{\underset{|R| \equiv_2 |Q|}{i \notin R \subseteq \overline{Q}}}(-1)^{S(Q \setminus R, Q)}\eta_{Q \setminus R} \sum_{j \in R} (-1)^{S(R \setminus \lbrace j \rbrace,R)}\lambda_{ij} X_{R \setminus \lbrace j \rbrace}\\
&=\sum_{\underset{|R| \equiv_2 |Q|}{i \in R \subseteq \overline{Q}}}(-1)^{S(Q \setminus R, Q)}\eta_{Q \setminus R} \left(\sum_{\underset{j \in R}{j>i}} (-1)^{S(R \setminus \lbrace j \rbrace,R)}\lambda_{ij} X_{R \setminus \lbrace j \rbrace} +2(-1)^{S(R \setminus \lbrace i \rbrace,R)}\beta_i X_{R \setminus \lbrace i \rbrace}\right)+\\
&+\sum_{\underset{|R| \equiv_2 |Q|}{i \in R \subseteq \overline{Q}}}(-1)^{S(Q \setminus R, Q)}\eta_{Q \setminus R} \left(\sum_{j<i} (-1)^{S(R\setminus \lbrace j \rbrace,R)}\lambda_{ij} X_{R \setminus \lbrace j \rbrace}\right)+\\
&+\sum_{\underset{|R| \equiv_2 |Q|}{i \notin R \subseteq \overline{Q}}}(-1)^{S(Q \setminus R, Q)}\eta_{Q \setminus R} \sum_{j \in R} (-1)^{S(R \setminus \lbrace j \rbrace,R)}\lambda_{ij} X_{R \setminus \lbrace j \rbrace}\\
&=\sum_{\underset{|R| \equiv_2 |Q|}{R \subseteq \overline{Q}}}(-1)^{S(Q \setminus R, Q)}\eta_{Q \setminus R} \sum_{j \in R} (-1)^{S(R \setminus \lbrace j \rbrace,R)}\lambda_{ij} X_{R \setminus \lbrace j \rbrace},\\
\end{align*}
where we introduce the notation $\lambda_{ii}:=2\beta_i$ for every $i=0,1, \ldots, n$.
Then \eqref{cond1} rewrites as
\[
\sum_{\underset{|R| \equiv_2 |Q|}{R \subseteq \overline{Q}}}(-1)^{S(Q \setminus R, Q)}\eta_{Q \setminus R} \sum_{j \in R} (-1)^{S(R \setminus \lbrace j \rbrace,R)}\lambda_{ij} X_{R \setminus \lbrace j \rbrace}=0,
\]
which can be rearranged as
\[
\sum_{\underset{|R'| \not\equiv_2 |Q|}{R' \subsetneq \overline{Q}}}\sum_{j \in \overline{Q}\setminus R' }(-1)^{S(Q \setminus (R' \sqcup \lbrace j \rbrace), Q)+S(R' ,R' \sqcup \lbrace j \rbrace)}\lambda_{ij} \eta_{Q \setminus (R' \sqcup \lbrace j \rbrace)}X_{R'}=0.
\]
Now it is clear that \eqref{cond1} is equivalent to
\[\sum_{j \in \overline{Q}\setminus R' }(-1)^{S(Q \setminus (R' \sqcup \lbrace j \rbrace), Q)+S(R' ,R' \sqcup \lbrace j \rbrace)}\lambda_{ij} \eta_{Q \setminus (R' \sqcup \lbrace j \rbrace)}=0 \]
for every $R' \subseteq \overline{Q}$ such that $|R'| \not\equiv_2 |Q|$ and every $i=0, 1, \ldots, n$. This can be split into two independent equalities according to whether $0$ is or is not contained in $R'$: in the first case we rewrite $R'=R \sqcup \lbrace 0 \rbrace$, while in the second we identify $R'$ with the corresponding subset $R$ of $Q$. We obtain
\begin{equation*}
\begin{cases}
\sum_{j \in Q \setminus R }(-1)^{S(Q \setminus (R \sqcup \lbrace j \rbrace), Q)+S(R, R \sqcup \lbrace j \rbrace)}\lambda_{ij} \eta_{Q \setminus (R \sqcup \lbrace j \rbrace)}=0 \quad &\textrm{for every } R  \subsetneq Q,\ |R| \equiv_2 |Q|\\
\sum_{j \in Q \setminus R}(-1)^{S(Q \setminus (R \sqcup \lbrace j \rbrace), Q)+S(R ,R \sqcup \lbrace j \rbrace)}\lambda_{ij} \eta_{Q \setminus (R \sqcup \lbrace j \rbrace)}+\\
+(-1)^{S(Q \setminus R, Q)+|R|}\gamma_i \eta_{Q \setminus R}=0 \quad &\textrm{for every } R \subseteq Q,\ |R| \not\equiv_2 |Q|
\end{cases}
\end{equation*}
for every $i=0, 1, \ldots, n$ and finally, by operating the substitution $T=Q\setminus R$, we find
\begin{equation*}
\begin{cases}
\sum_{j \in T }(-1)^{S(T \setminus \lbrace j \rbrace), Q)+S(Q \setminus T, (Q \setminus T) \sqcup \lbrace j \rbrace)}\lambda_{ij} \eta_{T \setminus  \lbrace j \rbrace}=0 \quad &\textrm{for every } \emptyset \neq T  \subseteq Q,\ |T| \textrm{ even}\\
\sum_{j \in T}(-1)^{S(T \setminus \lbrace j \rbrace), Q)+S(Q \setminus T, (Q \setminus T) \sqcup \lbrace j \rbrace)}\lambda_{ij} \eta_{T \setminus \lbrace j \rbrace}+\\
+(-1)^{S(T, Q)+|Q|-|T|}\gamma_i \eta_T=0 \quad &\textrm{for every } T \subseteq Q,\ |T| \textrm{ odd}
\end{cases}
\end{equation*}
for every $i=0, 1, \ldots, n$. Now observe that
\begin{align*}
&S(T \setminus \lbrace j \rbrace, Q)+S(Q \setminus T,(Q \setminus T) \sqcup \lbrace j \rbrace)=\\
\overset{Lem.~\ref{lemmacrucial}, \ \ref{corcompl})}&{=}S(T \setminus \lbrace j \rbrace, Q)+|Q|-|T|-S(\lbrace j \rbrace,(Q \setminus T) \sqcup \lbrace j \rbrace)\\
\overset{Lem.~\ref{lemmacrucial}, \ \ref{lemmaFPF})}&{=}2S(T \setminus \lbrace j \rbrace, Q)+|Q|-|T|-S(T,Q)-S(T \setminus \lbrace j \rbrace,T)\\
\overset{Lem.~\ref{lemmacrucial}, \ \ref{lemmaPi})}&{=}2S(T \setminus \lbrace j \rbrace, Q)+|Q|-2|T|-S(T,Q)+1+S(\lbrace j \rbrace, T).
\end{align*}
Hence we can conclude that \eqref{cond1} is equivalent to
\begin{equation}\label{sys:2}
\begin{cases}
\sum_{j \in T }(-1)^{S(\lbrace j \rbrace, T)}\lambda_{ij} \eta_{T \setminus  \lbrace j \rbrace}=0 \quad &\textrm{for every } \emptyset \neq T  \subseteq Q,\ |T| \textrm{ even}\\
\sum_{j \in T}(-1)^{S(\lbrace j \rbrace, T)}\lambda_{ij} \eta_{T \setminus \lbrace j \rbrace}+\gamma_i \eta_T=0 \quad &\textrm{for every } T \subseteq Q,\ |T| \textrm{ odd}
\end{cases}
\end{equation}
for every $i=0, 1, \ldots, n$. Notice that for $T=\lbrace j \rbrace$ we find $\lambda_{ij}+\eta_{\lbrace j \rbrace}\gamma_i =0$ for every $i=0,1,\ldots, n$ and every $j=1, \ldots n$, which is exactly \eqref{t01}. If $\alpha=0$, then \eqref{t01} immediately yields $\alpha=\beta_i=\gamma_i=\lambda_{ij}=0$ for every $i,j=1, \ldots, n$ and we find that $A=Cl(0,0,0,0)$, while \eqref{sys:2} becomes trivial. On the other hand, if $\alpha \neq 0$, then \eqref{t01} implies that $\eta_{\lbrace j \rbrace}=-\frac{\gamma_j}{2 \alpha}$, $\beta_j=\frac{\gamma_j^2}{4 \alpha}$ and $\lambda_{ij}=\frac{\gamma_i\gamma_j}{2\alpha}$ for every $i,j=1, \ldots, n$, $i<j$. This means that $A$ is of the type $A=\left(\alpha,\frac{\gamma_i^2}{4\alpha}, \gamma_i, \frac{\gamma_i\gamma_j}{2 \alpha}\right)$ and that \eqref{sys:2} changes into
\begin{equation}\label{sys:3}
\begin{cases}
\sum_{j \in T }(-1)^{S(\lbrace j \rbrace, T)}\frac{\gamma_i\gamma_j}{2 \alpha} \eta_{T \setminus  \lbrace j \rbrace}=0 \quad &\textrm{for every } \emptyset \neq T  \subseteq Q,\ |T| \textrm{ even}\\
\gamma_i\left(\sum_{j \in T}(-1)^{S(\lbrace j \rbrace, T)}\frac{\gamma_j}{2 \alpha} \eta_{T \setminus \lbrace j \rbrace}+ \eta_T\right)=0 \quad &\textrm{for every } T \subseteq Q,\ |T| \textrm{ odd}
\end{cases}
\end{equation}
for every $i=0, 1, \ldots, n$. In particular, for $i=0$, we have
\[2\alpha\left(\sum_{j \in T}(-1)^{S(\lbrace j \rbrace, T)}\frac{\gamma_j}{2 \alpha} \eta_{T \setminus \lbrace j \rbrace}+ \eta_T\right)=0 \quad \textrm{for every } T \subseteq Q,\ |T| \textrm{ odd},\]
i.e. $\eta_T=\sum_{j \in T}(-1)^{S(\lbrace j \rbrace, T)+1}\frac{\gamma_j}{2 \alpha} \eta_{T \setminus \lbrace j \rbrace}$ for every $T \subseteq Q$, with $|T|$ odd, since $\alpha \neq 0$. To complete our proof it is sufficient to show that this equality implies the first one of \eqref{sys:3} for every $i=0,1,\ldots, n$.
To this aim, observe that every $T \subseteq Q$ with odd cardinality can be written as $T' \setminus \lbrace j \rbrace$ for a non-empty $T' \subseteq Q$ with even cardinality and some $j \in T$. Then we have
\[
\eta_{T'\setminus \lbrace j \rbrace} =\sum_{l \in T'\setminus \lbrace j \rbrace}(-1)^{S(\lbrace l \rbrace, T'\setminus \lbrace j \rbrace)+1}\frac{\gamma_l}{2\alpha}\eta_{T' \setminus  \lbrace j,l \rbrace}
\]
and
\begin{align*}
\sum_{j \in T'}(-1)^{S(\lbrace j \rbrace, T')}\frac{\gamma_i\gamma_j}{2\alpha}\eta_{T' \setminus  \lbrace j \rbrace} =\sum_{j \in T'}\sum_{l \in T'\setminus \lbrace j \rbrace}(-1)^{S(\lbrace j \rbrace, T')+S(\lbrace l \rbrace, T'\setminus \lbrace j \rbrace)+1}\frac{\gamma_i\gamma_j\gamma_l}{4\alpha^2}\eta_{T' \setminus  \lbrace j,l \rbrace}
\end{align*}
for every $\emptyset \neq T' \subseteq Q$ with $|T'|$ even and every $i=0,1,\ldots, n$.
We point out that
\begin{align*}
S(\lbrace j \rbrace, T')+S(\lbrace l \rbrace, T' \setminus \lbrace j \rbrace) \overset{Lem.~\ref{lemmacrucial} \ \ref{LemmaSF2})}&{=}S(\lbrace j \rbrace, \lbrace j,l \rbrace)+S(\lbrace j \rbrace, T' \setminus \lbrace l \rbrace)+S(\lbrace l \rbrace, T' \setminus \lbrace j \rbrace)\\
\overset{Lem.~\ref{lemmacrucial} \ \ref{corcompl})}&{=}1-S(\lbrace l \rbrace, \lbrace j,l \rbrace)+S(\lbrace j \rbrace, T' \setminus \lbrace l \rbrace)+S(\lbrace l \rbrace, T' \setminus \lbrace j \rbrace)\\
\overset{Lem.~\ref{lemmacrucial} \ \ref{LemmaSF2})}&{=}1-S(\lbrace l \rbrace, T')+S(\lbrace j \rbrace, T' \setminus \lbrace l \rbrace)+2S(\lbrace l \rbrace, T' \setminus \lbrace j \rbrace)\\
&\equiv_2 1+S(\lbrace l \rbrace, T')+S(\lbrace j \rbrace, T' \setminus \lbrace l \rbrace),
\end{align*}
hence we must have
\[\sum_{j \in T'}(-1)^{S(\lbrace j \rbrace, T')}\frac{\gamma_i\gamma_j}{2\alpha}\eta_{T' \setminus  \lbrace j \rbrace}=0\]
for every $\emptyset \neq T' \subseteq Q$ with $|T'|$ even and every $i=0,1, \ldots, n$.
\end{proof}

\begin{remark}\label{rewritet0Q}
In the second case, that is when $\alpha \neq 0$, the elements $t_{0,Q}$ can be rewritten as
\[
\begin{split}
t_{0,Q}&=\sum_{\underset{|R| \equiv_2 |Q|}{R \subseteq Q}}(-1)^{S(Q \setminus R, Q)}\eta_{Q \setminus R} X_R+\\
&+\sum_{\underset{|R| \not\equiv_2 |Q|}{R \subseteq Q}}\sum_{i \in Q \setminus R}(-1)^{S(Q \setminus R, Q)+S(\lbrace i \rbrace, Q \setminus R)+1}\eta_{Q \setminus  (R \sqcup \lbrace i \rbrace)}\frac{\gamma_i}{2\alpha} GX_R.
\end{split}
\]
\end{remark}

\subsection{The elements \texorpdfstring{$t_{1, Q}$}{}}\label{sec:t1Q}

We describe the elements $t_{1,Q}$ for $|Q|\leq 1$.
Equality \eqref{B3cl} for $Q=\emptyset$ becomes 
\[
d_P(t_{k,\emptyset})=0 \quad \textrm{ for all } P\subseteq \lbrace 1, \ldots n \rbrace,  
\]
and in particular we have that $t_{1, \emptyset} \in \ker d_i$ for every $i=1, \ldots, n$. By Lemma~\ref{kerder} we have that $t_{1, \emptyset}=\mu_11+\mu_2 G$ for some $\mu_1, \mu_2 \in \Bbbk$. Since \eqref{B2cl} also holds, we immediately find $\mu_1=0$ and thus $t_{1, \emptyset}=\mu_2 G$.
It is straightforward to check that \eqref{B4cl} is satisfied, while \eqref{B5cl} gives
\[
t_{1,\emptyset}X_j =X_jt_{1,\emptyset}+2 t_{1,\lbrace i \rbrace} \quad \textrm{ for all } j=1, \ldots n,
\]
i.e.
\[t_{1,\lbrace j \rbrace}=\frac{\mu_2}{2}(GX_j -X_jG)=\frac{\mu_2}{2}(-\gamma_j+2GX_j) \quad \textrm{ for all } j=1, \ldots n.\]
Again, it is not hard to check that \eqref{B2cl}-\eqref{B4cl} are satisfied by such an element. Moreover \eqref{B5cl} with $k=1$, $Q=\lbrace j \rbrace$ and $i=j$ reads
\[t_{1,\lbrace j \rbrace}X_j =-X_jt_{1,\lbrace j \rbrace},\]
which is easily seen to hold.

Given $Q= \lbrace i_1<i_2<\ldots <i_{|Q|} \rbrace \subseteq  \lbrace 0, 1, \ldots, n \rbrace$ and a subset $R \subseteq Q$ with even cardinality, we can consider the set $\pi_2(R)$ of all perfect matchings for $R$. We write each perfect matching $\mathcal{P}$ for $R$ in his canonical form $\mathcal{P}=P_1|P_2|\ldots|P_m$, where $2m=|R|$, and we indicate with $\sgn\left(\mathcal{P}\right)$ the sign of $\mathcal{P}$ (see. Definition~\ref{sign}). For each block $P_k=\lbrace i_{k_1}<i_{k_2} \rbrace$ of $\mathcal{P}$ we define $\Lambda(\emptyset):=1$, $\Lambda(P_k):=\lambda_{i_{k_1}i_{k_2}}$ and for each $\mathcal{P}=P_1|P_2| \ldots |P_m$ we set $\Lambda(\mathcal{P}):=\prod_{k=1}^l \Lambda(P_k)$.

\begin{proposition}\label{propt1Q}
Let $X_0=G$, $\beta_0=\alpha$ and $\lambda_{0j}=\gamma_j$ for every $j=1, \ldots, n$. Let also $\overline{Q}:=Q \sqcup \lbrace 0 \rbrace$. For every $Q \subseteq \lbrace 1, \ldots, n \rbrace$ the element $t_{1,Q}$ satisfies \eqref{B5cl} if, and only if,
\begin{equation}\label{t1Q}
t_{1,Q}=\frac{\mu}{2^{\left\lfloor \frac{|\overline{Q}|}{2}\right\rfloor}}\sum_{\underset{|R| \equiv_2 |\overline{Q}|}{R \subseteq \overline{Q}}}\mathfrak{s}_Q(R)\sum_{\mathcal{P} \in \pi_2(\overline{Q}\setminus R)}\sgn(\mathcal{P})\Lambda(\mathcal{P})X_R,
\end{equation}
where $\mathfrak{s}_Q(R)=(-1)^{S(R,\overline{Q})+\frac{|\overline{Q}|-|R|}{2}}2^{\left\lfloor \frac{|R|}{2}\right\rfloor}$ and $\mu \in \Bbbk$.
\end{proposition}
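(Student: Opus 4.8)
The plan is to read \eqref{B5cl} (for $k=1$) as a recursion and thereby reduce the statement to a single existence computation. For $i\notin Q$ it rearranges to
\[
t_{1,Q\sqcup\lbrace i\rbrace}=\tfrac12(-1)^{S(Q,Q\sqcup\lbrace i\rbrace)}\bigl(t_{1,Q}X_i-(-1)^{|Q|}X_it_{1,Q}\bigr),
\]
whereas for $i\in Q$ it is the commutation constraint $t_{1,Q}X_i=(-1)^{|Q|}X_it_{1,Q}$; under the convention $X_0=G$, the instance $i=0$ is exactly \eqref{B4cl}. Since $t_{1,\emptyset}$ was already forced to equal $\mu G$ in the discussion preceding the statement, any solution of \eqref{B5cl} is obtained from $t_{1,\emptyset}$ by iterating the displayed recursion, adjoining one index at a time, and is therefore uniquely determined by $\mu$. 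Thus the ``only if'' direction is a uniqueness assertion, and it suffices to prove the ``if'' direction: that the explicit family \eqref{t1Q} solves \eqref{B5cl}. The two families, sharing $t_{1,\emptyset}=\mu G$, then coincide.

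After recording the base cases $|Q|\le 1$ (where \eqref{t1Q} returns $\mu G$ and $\tfrac{\mu}{2}(2GX_j-\gamma_j)$, matching the computations above), I would verify the ``if'' direction by computing the graded bracket $t_{1,Q}X_i-(-1)^{|Q|}X_it_{1,Q}$ directly from \eqref{t1Q}, expanding each product $X_RX_i$ and $X_iX_R$ in the canonical basis through \eqref{XRXj} and \eqref{betacoroll}. For every surviving set $R$ (note $i\notin\overline{Q}$, so $i\notin R$) each product has a top-degree part proportional to $X_{R\sqcup\lbrace i\rbrace}$ and contraction parts $\lambda_{li}X_{R\setminus\lbrace l\rbrace}$, and the bracket splits into two kinds of contributions. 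The top-degree parts combine: because $|R|\equiv_2|\overline{Q}|$ forces $(-1)^{|R|}=-(-1)^{|Q|}$, they add rather than cancel, yielding $-2(-1)^{|Q|}(-1)^{S(\lbrace i\rbrace,R\sqcup\lbrace i\rbrace)}X_{R\sqcup\lbrace i\rbrace}$, whose factor $2$ is absorbed by the $\tfrac12$ of the recursion; these reproduce the summands of $t_{1,Q\sqcup\lbrace i\rbrace}$ indexed by surviving sets $R'\ni i$. The contraction parts turn a surviving index $l$ into a new matched block $\lbrace l,i\rbrace$ carrying $\lambda_{li}$; summing over the partner $l$ and over $\mathcal{P}\in\pi_2(\overline{Q}\setminus R)$ is precisely the recursive description of perfect matchings in Proposition~\ref{prop:pmrecur} with $i$ as the distinguished element, and reconstructs the summands of $t_{1,Q\sqcup\lbrace i\rbrace}$ indexed by $R'\not\ni i$, whose matchings run over $\pi_2((\overline{Q}\setminus R')\sqcup\lbrace i\rbrace)$.

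The main obstacle is the sign and power-of-two bookkeeping certifying that these two families reassemble exactly into the coefficients $\mathfrak{s}_{Q\sqcup\lbrace i\rbrace}(R')$, $\sgn(\mathcal{P}')\Lambda(\mathcal{P}')$ and the prefactor $\mu\,2^{-\lfloor|\overline{Q\sqcup\lbrace i\rbrace}|/2\rfloor}$. The parity constraint $|R|\equiv_2|\overline{Q}|$ does double duty: it makes the top-degree terms add, and it forces the floor exponents in $\mathfrak{s}$ to match across the recursion in both the appending case ($|R'|=|R|+1$) and the contracting case ($|R'|=|R|-1$), as one verifies directly. The $S(-,-)$ exponents generated by \eqref{XRXj}, \eqref{betacoroll} and the reordering \eqref{reorder} are converted into those defining $\mathfrak{s}$ by feeding in Lemma~\ref{lemmacrucial}, chiefly \ref{lemmaPi}), \ref{corcompl}) and \ref{lemmaFPF}), while the sign carried by the new block $\lbrace l,i\rbrace$ is matched to that of $\mathcal{P}'$ through Proposition~\ref{prop:matching}\,\ref{cor:sgn}). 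Finally I would dispatch the remaining instances of \eqref{B5cl}: for $i\in Q$ the same expansion, now also producing the diagonal value $\lambda_{ii}=2\beta_i$ from \eqref{betacoroll}, makes the two kinds of terms cancel and yields $t_{1,Q}X_i=(-1)^{|Q|}X_it_{1,Q}$, while the case $i=0$ reproduces \eqref{B4cl}; both are parity-driven cancellations of the same type. With the ``if'' direction established, the uniqueness observation of the first paragraph closes the ``only if'' direction, completing the proof.
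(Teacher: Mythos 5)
Your proposal is correct and follows essentially the same route as the paper: induction on $|Q|$ using the recursion extracted from \eqref{B5cl}, expansion of $X_RX_i$ via \eqref{XRXj} and \eqref{betacoroll}, sign conversion through Lemma~\ref{lemmacrucial}, and reassembly of the perfect matchings through Proposition~\ref{prop:pmrecur}. The only difference is organizational \--- you place the whole computation in the ``if'' direction and recover ``only if'' by uniqueness of the recursion starting from $t_{1,\emptyset}=\mu G$, whereas the paper carries out the main computation while proving ``only if'' and then reduces most of the ``if'' verification to the induction hypothesis and that same calculation \--- but the computational content is identical.
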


\begin{proof}
We start by proving that if $t_{1,Q}$ satisfies \eqref{B5cl}, then it must be of the form \eqref{t1Q}. The proof is by induction on $|Q|$. For $Q=\emptyset$ we have
\begin{equation*}
t_{1,\emptyset}=\mu(-1)^{S(\overline{Q},\overline{Q})}\sgn(\emptyset)\Lambda(\emptyset)X_0=\mu G,
\end{equation*}
and for $Q=\lbrace i \rbrace$ we have
\begin{equation*}
t_{1,\lbrace i \rbrace}=\frac{\mu}{2}\left((-1)^{S(\emptyset,\overline{Q})+1}\sgn(\overline{Q})\Lambda(\overline{Q})+2(-1)^{S(\overline{Q},\overline{Q})}\sgn(\emptyset)\Lambda(\emptyset)X_0X_i\right)=\frac{\mu}{2}\left(-\gamma_i+2GX_i\right).
\end{equation*}
Both of these equalities hold true (see the beginning of this subsection).
Now suppose \eqref{t1Q} is verified for any $Q \subseteq \lbrace 1, \ldots, n \rbrace$ with cardinality $|Q|=m$ for a fixed $1 \leq m \leq n$. Consider $Q'=\lbrace i_1 < i_2 < \ldots <i_m < i_{m+1} \rbrace\subseteq \lbrace 1, \ldots, n \rbrace$ with cardinality $m+1$ and split it as $Q'=Q \sqcup \lbrace i_{m+1} \rbrace$, where $Q=\lbrace i_1 < i_2 < \ldots <i_m \rbrace$.
Then
\begin{align*}
t_{1,Q'}&=t_{1,Q \sqcup \lbrace i_{m+1} \rbrace}\\
\overset{\eqref{B5cl}}&{=}\frac{1}{2} \left(t_{1,Q}X_{i_{m+1}}+(-1)^{m+1}X_{i_{m+1}}t_{1,Q}\right)\\
\overset{Ind. \ hyp.}&{=} \frac{\mu}{2^{\left\lfloor \frac{|\overline{Q}|}{2}\right\rfloor+1}} \sum_{\underset{|R| \equiv_2 |\overline{Q}|}{R \subseteq \overline{Q}}}\mathfrak{s}_Q(R)\sum_{\mathcal{P} \in \pi_2(\overline{Q}\setminus R)}\sgn(\mathcal{P})\Lambda(\mathcal{P})\left(X_RX_{i_{m+1}}+(-1)^{|R|}X_{i_{m+1}}X_R\right)\\
\overset{\eqref{XRXj}}&{=} \frac{\mu}{2^{\left\lfloor \frac{|\overline{Q}|}{2}\right\rfloor+1}}\sum_{\underset{|R| \equiv_2 |\overline{Q}|}{R \subseteq \overline{Q}}}\mathfrak{s}_Q(R)\sum_{\mathcal{P} \in \pi_2(\overline{Q}\setminus R)}\sgn(\mathcal{P})\Lambda(\mathcal{P})2X_RX_{i_{m+1}}+\\
&+ \frac{\mu}{2^{\left\lfloor \frac{|\overline{Q}|}{2}\right\rfloor+1}}\sum_{\underset{|R| \equiv_2 |\overline{Q}|}{R \subseteq \overline{Q}}}\mathfrak{s}_Q(R)\sum_{\mathcal{P} \in \pi_2(\overline{Q}\setminus R)}\sgn(\mathcal{P})\Lambda(\mathcal{P})\sum_{i \in R} (-1)^{S(R \setminus \lbrace i \rbrace,R )+1}\lambda_{ii_{m+1}} X_{R \setminus \lbrace i \rbrace}\\
\overset{Lem.~\ref{lemmacrucial},~\ref{cor1})}&{=} \frac{\mu}{2^{\left\lfloor \frac{|\overline{Q'}|-1}{2}\right\rfloor}}\sum_{\underset{|R'| \equiv_2 |\overline{Q'}|}{i_{m+1} \in R' \subseteq \overline{Q'}}}(-1)^{S(R',\overline{Q'})+\frac{|\overline{Q'}|-|R'|}{2}}2^{\left\lfloor \frac{|R'|-1}{2}\right\rfloor}\sum_{\mathcal{P} \in \pi_2(\overline{Q'}\setminus R')}\sgn(\mathcal{P})\Lambda(\mathcal{P})X_{R'}+\\
&+ \frac{\mu}{2^{\left\lfloor \frac{|\overline{Q}|}{2}\right\rfloor+1}}\sum_{\underset{|R| \equiv_2 |\overline{Q}|}{R \subseteq \overline{Q}}}\mathfrak{s}_Q(R)\sum_{\mathcal{P} \in \pi_2(\overline{Q}\setminus R)}\sgn(\mathcal{P})\Lambda(\mathcal{P})\sum_{i \in R} (-1)^{S(R \setminus \lbrace i \rbrace,R )+1}\lambda_{ii_{m+1}} X_{R \setminus \lbrace i \rbrace}\\
&= \frac{\mu}{2^{\left\lfloor \frac{|\overline{Q'}|}{2}\right\rfloor}}\sum_{\underset{|R'| \equiv_2 |\overline{Q'}|}{i_{m+1} \in R' \subseteq \overline{Q'}}}\mathfrak{s}_{Q'}(R')\sum_{\mathcal{P} \in \pi_2(\overline{Q'}\setminus R')}\sgn(\mathcal{P})\Lambda(\mathcal{P})X_{R'}+\\
&+ \frac{\mu}{2^{\left\lfloor \frac{|\overline{Q'}|+1}{2}\right\rfloor}}\sum_{\underset{|R| \not\equiv_2 |\overline{Q'}|}{R \subseteq \overline{Q}}}\mathfrak{s}_Q(R)\sum_{\mathcal{P} \in \pi_2(\overline{Q}\setminus R)}\sgn(\mathcal{P})\Lambda(\mathcal{P})\sum_{i \in R} (-1)^{S(R \setminus \lbrace i \rbrace,R )+1}\lambda_{ii_{m+1}} X_{R \setminus \lbrace i \rbrace}\\
&= \frac{\mu}{2^{\left\lfloor \frac{|\overline{Q'}|}{2}\right\rfloor}}\sum_{\underset{|R'| \equiv_2 |\overline{Q'}|}{i_{m+1} \in R' \subseteq \overline{Q'}}}\mathfrak{s}_{Q'}(R')\sum_{\mathcal{P} \in \pi_2(\overline{Q'}\setminus R')}\sgn(\mathcal{P})\Lambda(\mathcal{P})X_{R'}+\frac{\mu}{2^{\left\lfloor \frac{|\overline{Q'}|+1}{2}\right\rfloor}} \cdot\\
&\cdot \sum_{\underset{|T| \equiv_2 |\overline{Q'}|}{i_{m+1} \notin T \subsetneq \overline{Q'}}}\sum_{i \in \overline{Q} \setminus T}\mathfrak{s}_{Q}(T \sqcup \lbrace i \rbrace)\sum_{\mathcal{P} \in \pi_2(U)}\sgn(\mathcal{P})\Lambda(\mathcal{P})(-1)^{S(T,T \sqcup \lbrace i \rbrace)+1}\lambda_{ii_{m+1}} X_{T},
\end{align*}
where $U=\overline{Q'}\setminus (T \sqcup \lbrace i, i_{m+1} \rbrace)=(\overline{Q'}\setminus T) \setminus \lbrace i, i_{m+1} \rbrace$.
Now we observe that
\begin{align*}
\mathfrak{s}_Q(T \sqcup \lbrace i \rbrace)(-1)^{S(T,T \sqcup \lbrace i \rbrace)+1}&=(-1)^{S(T \sqcup \lbrace i \rbrace,\overline{Q})+\frac{|\overline{Q}|-|T|-1}{2}}2^{\left\lfloor \frac{|T \sqcup \lbrace i \rbrace|}{2}\right\rfloor}(-1)^{S(T,T \sqcup \lbrace i \rbrace)+1}\\
\overset{Lem.~\ref{lemmacrucial}, \ \ref{lemmaFPF})}&{=}(-1)^{S(\lbrace i \rbrace,\overline{Q}\setminus T)+S(T,\overline{Q})+\frac{|\overline{Q}|-|T|-1}{2}+1}2^{\left\lfloor \frac{|T \sqcup \lbrace i \rbrace|}{2}\right\rfloor}\\
&=(-1)^{S(\lbrace i \rbrace,\overline{Q}\setminus T)}(-1)^{S(T,\overline{Q})+\frac{|\overline{Q}|+1-|T|}{2}}2^{\left\lfloor \frac{|T|+1}{2}\right\rfloor}\\
&=(-1)^{S(\lbrace i \rbrace,\overline{Q}\setminus T)}(-1)^{S(T,\overline{Q'})+\frac{|\overline{Q'}|-|T|}{2}}2^{\left\lfloor \frac{|T|+1}{2}\right\rfloor},
\end{align*}
and therefore
\begin{align*}
t_{1,Q'}&=\frac{\mu}{2^{\left\lfloor \frac{|\overline{Q'}|}{2}\right\rfloor}}\sum_{\underset{|R'| \equiv_2 |\overline{Q'}|}{i_{m+1} \in R' \subseteq \overline{Q'}}}\mathfrak{s}_{Q'}(R')\sum_{\mathcal{P} \in \pi_2(\overline{Q'}\setminus R')}\sgn(\mathcal{P})\Lambda(\mathcal{P})X_{R'}+\frac{\mu}{2^{\left\lfloor \frac{|\overline{Q'}|+1}{2}\right\rfloor}}\sum_{\underset{|T| \equiv_2 |\overline{Q'}|}{i_{m+1} \notin T \subsetneq \overline{Q'}}}\\
& \sum_{i \in \overline{Q} \setminus T}(-1)^{S(\lbrace i \rbrace,\overline{Q}\setminus T)}(-1)^{S(T,\overline{Q'})+\frac{|\overline{Q'}|-|T|}{2}}2^{\left\lfloor \frac{|T|+1}{2}\right\rfloor}\sum_{\mathcal{P} \in \pi_2(U)}\sgn(\mathcal{P})\Lambda(\mathcal{P})\lambda_{ii_{m+1}} X_{T}\\
&=\frac{\mu}{2^{\left\lfloor \frac{|\overline{Q'}|}{2}\right\rfloor}}\sum_{\underset{|R'| \equiv_2 |\overline{Q'}|}{i_{m+1} \in R' \subseteq \overline{Q'}}}\mathfrak{s}_{Q'}(R')\sum_{\mathcal{P} \in \pi_2(\overline{Q'}\setminus R')}\sgn(\mathcal{P})\Lambda(\mathcal{P})X_{R'}+\\
&+\frac{\mu}{2^{\left\lfloor \frac{|\overline{Q'}|}{2}\right\rfloor}}\sum_{\underset{|T| \equiv_2 |\overline{Q'}|}{i_{m+1} \notin T \subsetneq \overline{Q'}}}\mathfrak{s}_{Q'}(T) \sum_{i \in \overline{Q} \setminus T}\sum_{\mathcal{P} \in \pi_2(U)}(-1)^{S(\lbrace i \rbrace,\overline{Q}\setminus T)}\sgn(\mathcal{P})\Lambda(\mathcal{P})\lambda_{ii_{m+1}} X_{T}\\
\overset{Prop.~\ref{prop:pmrecur}}&{=}\frac{\mu}{2^{\left\lfloor \frac{|\overline{Q'}|}{2}\right\rfloor}}\sum_{\underset{|R'| \equiv_2 |\overline{Q'}|}{i_{m+1} \in R' \subseteq \overline{Q'}}}\mathfrak{s}_{Q'}(R')\sum_{\mathcal{P} \in \pi_2(\overline{Q'}\setminus R')}\sgn(\mathcal{P})\Lambda(\mathcal{P})X_{R'}+\\
&+\frac{\mu}{2^{\left\lfloor \frac{|\overline{Q'}|}{2}\right\rfloor}}\sum_{\underset{|T| \equiv_2 |\overline{Q'}|}{i_{m+1} \notin T \subsetneq \overline{Q'}}}\mathfrak{s}_{Q'}(T) \sum_{\mathcal{P'} \in \pi_2(\overline{Q'}\setminus T)}\sgn(\mathcal{P}')\Lambda(\mathcal{P}') X_{T}\\
&=\frac{\mu}{2^{\left\lfloor \frac{|\overline{Q'}|}{2}\right\rfloor}}\sum_{\underset{|R'| \equiv_2 |\overline{Q'}|}{R' \subseteq \overline{Q'}}}\mathfrak{s}_{Q'}(R')\sum_{\mathcal{P} \in \pi_2(\overline{Q'}\setminus R')}\sgn(\mathcal{P})\Lambda(\mathcal{P})X_{R'}.
\end{align*}
Thus the first implication is proved.

Now we can check that each $t_{1,Q}$ defined by \eqref{t1Q} satisfies \eqref{B5cl}, again by induction on $|Q|$. Since $t_{1,\emptyset}=\mu G$, it is easy to see that $t_{1,\emptyset}X_i-X_it_{1,\emptyset}=\mu G X_i-\mu X_i G=\mu(-\gamma_i+2GX_i)=2t_{1,\lbrace i \rbrace}$ for every $i=1, \ldots, n$. Similarly, if we consider $Q=\lbrace i \rbrace$, then
\[t_{1,\lbrace i \rbrace}X_i+X_it_{1,\lbrace i \rbrace}=\frac{\mu}{2}(-\gamma_iX_i+2\beta_i G-\gamma_iX_i+2X_i GX_i)=0\]
and
\begin{align*}
t_{1,\lbrace i \rbrace}X_j+X_jt_{1,\lbrace i \rbrace}&=\frac{\mu}{2}(-\gamma_iX_j+2GX_iX_j-\gamma_iX_j+2X_jGX_i)\\
&=\mu(-\gamma_iX_j+2GX_iX_j+\gamma_jX_i-\lambda_{ij}G)\\
&=2(-1)^{S(\lbrace i \rbrace, \lbrace i,j \rbrace)}t_{1,\lbrace i, j \rbrace}
\end{align*}
for every $j \neq i$.
Now suppose \eqref{B5cl} holds true for any $t_{1,Q}$ defined by \eqref{t1Q} for a $Q \subseteq \lbrace 1, \ldots, n \rbrace$ with cardinality $|Q|=m$ for a fixed $1 \leq m \leq n$. Consider $Q'=\lbrace i_1 < i_2 < \ldots <i_m < i_{m+1} \rbrace\subseteq \lbrace 1, \ldots, n \rbrace$ with cardinality $m+1$ and split it as $Q'=Q \sqcup \lbrace i_{m+1} \rbrace$, where $Q=\lbrace i_1 < i_2 < \ldots <i_m \rbrace$. Then
\begin{align*}
t_{1,Q'}X_{i_{m+1}}+(-1)^{|Q'|+1}X_{i_{m+1}}t_{1,Q'} \overset{Ind. \ hyp.}&{=}\frac{1}{2} \left(t_{1,Q}X_{i_{m+1}}X_{i_{m+1}}+(-1)^{|Q'|}X_{i_{m+1}}t_{1,Q}X_{i_{m+1}}\right)+\\
&+\frac{(-1)^{|Q'|+1}}{2} \left(X_{i_{m+1}}t_{1,Q}X_{i_{m+1}}+(-1)^{|Q'|}X_{i_{m+1}}X_{i_{m+1}}t_{1,Q}\right)\\
&=\frac{1}{2} \left(t_{1,Q}X_{i_{m+1}}X_{i_{m+1}}+(-1)^{|Q'|}X_{i_{m+1}}t_{1,Q}X_{i_{m+1}}\right)+\\
&+\frac{1}{2} \left(-(-1)^{|Q'|}X_{i_{m+1}}t_{1,Q}X_{i_{m+1}}-X_{i_{m+1}}X_{i_{m+1}}t_{1,Q}\right)\\
&=0.
\end{align*}
Similarly
\begin{align*}
&t_{1,Q'}X_i+(-1)^{|Q'|+1}X_it_{1,Q'}\\
\overset{Ind. \ hyp.}&{=}\frac{1}{2} \left(t_{1,Q}X_{i_{m+1}}X_i+(-1)^{|Q'|}X_{i_{m+1}}t_{1,Q}X_i\right)\\
&+\frac{(-1)^{|Q'|+1}}{2} \left(X_it_{1,Q}X_{i_{m+1}}+(-1)^{|Q'|}X_iX_{i_{m+1}}t_{1,Q}\right)\\
&=\frac{1}{2} \left(\lambda_{ii_{m+1}}t_{1,Q}-t_{1,Q}X_iX_{i_{m+1}}+(-1)^{|Q'|}X_{i_{m+1}}t_{1,Q}X_i\right)+\\
&-\frac{1}{2}\left((-1)^{|Q'|}X_it_{1,Q}X_{i_{m+1}}+\lambda_{ii_{m+1}}t_{1,Q}-X_{i_{m+1}}X_it_{1,Q}\right)\\
&=\frac{1}{2} \left[-\left(t_{1,Q}X_i+(-1)^{|Q'|}X_it_{1,Q}\right)X_{i_{m+1}}+(-1)^{|Q'|}X_{i_{m+1}}\left(t_{1,Q}X_i+(-1)^{|Q'|}X_it_{1,Q}\right)\right]\\
\end{align*}
for every $i \neq i_{m+1}$. If $i \in Q$ then this whole sum vanishes thanks to the induction hypothesis. On the other hand, if $i \notin Q'$ we either have $i>i_{m+1}$ or $i_m<i<i_{m+1}$.
In the first case one explicitly writes $t_{1,Q \sqcup \lbrace i \rbrace}$ using \eqref{t1Q} and then proves that
\[t_{1,Q'}X_i+(-1)^{|Q'|+1}X_it_{1,Q'}=2t_{1,Q' \sqcup \lbrace i \rbrace}=2(-1)^{S(Q',Q'\sqcup \lbrace i \rbrace)}t_{1,Q' \sqcup \lbrace i \rbrace}\]
with the same calculations used in the first part to show that \eqref{B5cl} implies \eqref{t1Q}. If instead $i_m<i<i_{m+1}$, we can first observe that
\begin{align*}
&t_{1,Q'}X_i+(-1)^{|Q'|+1}X_it_{1,Q'}=\\
&=\frac{1}{2} \left[-\left(t_{1,Q}X_i+(-1)^{|Q'|}X_it_{1,Q}\right)X_{i_{m+1}}+(-1)^{|Q'|}X_{i_{m+1}}\left(t_{1,Q}X_i+(-1)^{|Q'|}X_it_{1,Q}\right)\right]\\
&=(-1)^{S(Q,Q\sqcup \lbrace i \rbrace)+1}\left[t_{1,Q\sqcup \lbrace i \rbrace}X_{i_{m+1}}+(-1)^{|Q'|+1}X_{i_{m+1}}t_{1,Q\sqcup \lbrace i \rbrace}\right]\\
\end{align*}
and then prove once again directly that this quantity is equal to $2(-1)^{S(Q,Q\sqcup \lbrace i \rbrace)+1}t_{1,Q \sqcup \lbrace i, i_{m+1}\rbrace}$. Finally observe that for $i_m<i<i_{m+1}$ we get 
\[2(-1)^{S(Q,Q\sqcup \lbrace i \rbrace)+1}t_{1,Q \sqcup \lbrace i, i_{m+1}\rbrace}=-2t_{1,Q \sqcup \lbrace i, i_{m+1}\rbrace}=2(-1)^{S(Q',Q'\sqcup \lbrace i \rbrace)}t_{1,Q' \sqcup \lbrace i \rbrace}.\]
\end{proof}

\begin{proposition}\label{propt1Q2}
For every $Q \subseteq \lbrace 1, \ldots, n \rbrace$ the elements $t_{1,Q}$ defined in \eqref{t1Q} satisfy \eqref{B2cl}, \eqref{B3cl} and \eqref{B4cl}.
\end{proposition}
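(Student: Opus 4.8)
The plan is to verify the three requirements one at a time: \eqref{B2cl} is a one-line parity observation, \eqref{B3cl} is a direct application of Lemma~\ref{lemmader2} combined with a reindexing of the summation set, and \eqref{B4cl} is handled by induction on $|Q|$ that recycles the recursion already hidden in \eqref{B5cl}. For \eqref{B2cl}, recall that the main involution acts by $\sigma(X_R)=(-1)^{|R|}X_R$. Every index set $R$ occurring in \eqref{t1Q} satisfies $|R|\equiv_2|\overline{Q}|=|Q|+1$, so $\sigma$ scales each summand by the common factor $(-1)^{|Q|+1}$, which can be pulled out to give $\sigma(t_{1,Q})=(-1)^{1+|Q|}t_{1,Q}$ immediately.

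For \eqref{B3cl}, I would apply $d_P$ to \eqref{t1Q} termwise via Lemma~\ref{lemmader2}. Since $0\notin P$, the requirement $P\subseteq R\subseteq\overline{Q}$ forces $P\subseteq Q$; hence if $P\not\subseteq Q$ every summand is annihilated and $d_P(t_{1,Q})=0$. When $P\subseteq Q$ I substitute $R=R'\sqcup P$ with $R'\subseteq\overline{Q}\setminus P=\overline{Q\setminus P}$ and use $\overline{Q}\setminus R=\overline{Q\setminus P}\setminus R'$, which identifies the inner sum $\sum_{\mathcal{P}\in\pi_2(\overline{Q}\setminus R)}\sgn(\mathcal{P})\Lambda(\mathcal{P})$ with the matching sum appearing in $t_{1,Q\setminus P}$. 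It then remains to compare coefficients: the powers of two agree because $\lfloor\frac{|R'|+|P|}{2}\rfloor-\lfloor\frac{|R'|}{2}\rfloor$ and $\lfloor\frac{|\overline{Q}|}{2}\rfloor-\lfloor\frac{|\overline{Q}|-|P|}{2}\rfloor$ coincide once one notes $|R'|\equiv_2|\overline{Q}|-|P|$, while the signs agree via the exact identity $S(R'\sqcup P,\overline{Q})+S(P,R'\sqcup P)=S(P,\overline{Q})+S(R',\overline{Q\setminus P})$, which is a position-counting consequence of Lemma~\ref{lemmacrucial}. Together with $S(P,\overline{Q})=S(P,Q)+|P|$ this produces exactly the prefactor $(-1)^{S(P,Q)+\frac{|P|(|P|+3)}{2}}$ in front of $t_{1,Q\setminus P}$.

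The delicate point is \eqref{B4cl}, namely $t_{1,Q}G=(-1)^{|Q|}Gt_{1,Q}$. A head-on expansion in the style of Proposition~\ref{propt0Q2}, using \eqref{XRXj} with $j=0$, leads to a mixture of terms with $0\in R$ and with $0\notin R$ whose cancellation is a Pfaffian-type identity for the matching sums; the main obstacle is precisely this combinatorial cancellation, and the cleanest way I see to absorb it is to induct on $|Q|$ rather than fight it directly. The base case $Q=\emptyset$ is immediate from $t_{1,\emptyset}=\mu G$ and $G^2=\alpha$. For the inductive step write $Q'=Q\sqcup\{i_{m+1}\}$ with $i_{m+1}=\max Q'$, so that \eqref{B5cl}, established in Proposition~\ref{propt1Q}, yields the recursion $t_{1,Q'}=\tfrac12\bigl(t_{1,Q}X_{i_{m+1}}+(-1)^{|Q|+1}X_{i_{m+1}}t_{1,Q}\bigr)$. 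Using the induction hypothesis $t_{1,Q}G=(-1)^{|Q|}Gt_{1,Q}$ together with the Clifford relation $GX_{i_{m+1}}+X_{i_{m+1}}G=\gamma_{i_{m+1}}$, I push $G$ through $t_{1,Q'}G$: the two scalar $\gamma_{i_{m+1}}$-contributions cancel and what remains is exactly $\tfrac12 G\bigl(X_{i_{m+1}}t_{1,Q}-(-1)^{|Q|}t_{1,Q}X_{i_{m+1}}\bigr)$, which one checks equals $(-1)^{|Q'|}Gt_{1,Q'}$. Hence the Pfaffian-type cancellation is delegated to the already-proved \eqref{B5cl}, and all three conditions follow.
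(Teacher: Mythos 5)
Your proposal is correct and follows essentially the same route as the paper: a one-line parity check for \eqref{B2cl}, termwise application of Lemma~\ref{lemmader2} with the reindexing $R=R'\sqcup P$ and the sign identity from Lemma~\ref{lemmacrucial} for \eqref{B3cl}, and induction on $|Q|$ via the recursion $t_{1,Q'}=\tfrac12\bigl(t_{1,Q}X_{i_{m+1}}+(-1)^{|Q'|}X_{i_{m+1}}t_{1,Q}\bigr)$ supplied by \eqref{B5cl}, with cancellation of the $\gamma_{i_{m+1}}$ terms, for \eqref{B4cl}. The only cosmetic difference is that the paper also verifies $Q=\lbrace i\rbrace$ as a base case, which your argument shows is not strictly necessary.
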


\begin{proof}
We have
\begin{align*}
\sigma(t_{1,Q})&=\frac{\mu}{2^{\left\lfloor \frac{|\overline{Q}|}{2}\right\rfloor}}\sum_{\underset{|R| \equiv_2 |\overline{Q}|}{R \subseteq \overline{Q}}}\mathfrak{s}_Q(R)\sum_{\mathcal{P} \in \pi_2(\overline{Q}\setminus R)}\sgn(\mathcal{P})\Lambda(\mathcal{P})\sigma(X_R)\\
&=\frac{\mu}{2^{\left\lfloor \frac{|\overline{Q}|}{2}\right\rfloor}}\sum_{\underset{|R| \equiv_2 |\overline{Q}|}{R \subseteq \overline{Q}}}\mathfrak{s}_Q(R)\sum_{\mathcal{P} \in \pi_2(\overline{Q}\setminus R)}\sgn(\mathcal{P})\Lambda(\mathcal{P})(-1)^{|R|}X_R\\
&=\frac{\mu}{2^{\left\lfloor \frac{|\overline{Q}|}{2}\right\rfloor}}\sum_{\underset{|R| \equiv_2 |\overline{Q}|}{R \subseteq \overline{Q}}}\mathfrak{s}_Q(R)\sum_{\mathcal{P} \in \pi_2(\overline{Q}\setminus R)}\sgn(\mathcal{P})\Lambda(\mathcal{P})(-1)^{|Q|+1}X_R\\
&=(-1)^{|Q|+1}\frac{\mu}{2^{\left\lfloor \frac{|\overline{Q}|}{2}\right\rfloor}}\sum_{\underset{|R| \equiv_2 |\overline{Q}|}{R \subseteq \overline{Q}}}\mathfrak{s}_Q(R)\sum_{\mathcal{P} \in \pi_2(\overline{Q}\setminus R)}\sgn(\mathcal{P})\Lambda(\mathcal{P})X_R\\
&=(-1)^{|Q|+1}t_{1,Q},
\end{align*}
so \eqref{B2cl} is satisfied.
Next 
\[d_P(t_{1,Q})=\frac{\mu}{2^{\left\lfloor \frac{|\overline{Q}|}{2}\right\rfloor}}\sum_{\underset{|R| \equiv_2 |\overline{Q}|}{R \subseteq \overline{Q}}}2^{\left\lfloor \frac{|R|}{2}\right\rfloor}\sum_{\mathcal{P} \in \pi_2(\overline{Q}\setminus R)}(-1)^{\alpha(\mathcal{P},\overline{Q})}\Lambda(\mathcal{P})d_P(X_R).\]
If $P \not\subseteq Q$, then $P \not\subseteq R$ for every $R$ appearing in the above summation and thus $d_P(t_{1,Q})=0$, by Lemma~\ref{lemmader2}. On the other hand, if $P \subseteq Q$, then $P$ is contained in some $R \subseteq Q$ and we can write
\begin{align*}
&d_P(t_{1,Q})=\frac{\mu}{2^{\left\lfloor \frac{|\overline{Q}|}{2}\right\rfloor}}\sum_{\underset{|R| \equiv_2 |\overline{Q}|}{R \subseteq \overline{Q}}}\mathfrak{s}_Q(R)\sum_{\mathcal{P} \in \pi_2(\overline{Q}\setminus R)}\sgn(\mathcal{P})\Lambda(\mathcal{P})d_P(X_R)\\
\overset{Lem.~\ref{lemmader2}}&{=}\frac{\mu}{2^{\left\lfloor \frac{|\overline{Q}|}{2}\right\rfloor}}\sum_{\underset{|R| \equiv_2 |\overline{Q}|}{P \subseteq R \subseteq \overline{Q}}}\mathfrak{s}_Q(R)\sum_{\mathcal{P} \in \pi_2(\overline{Q}\setminus R)}\sgn(\mathcal{P})\Lambda(\mathcal{P})(-1)^{S(P, R)+\frac{|P|(|P|+1)}{2}}X_{R \setminus P}\\
&=\frac{\mu}{2^{\left\lfloor \frac{|\overline{Q}|}{2}\right\rfloor}}\sum_{\underset{|R'| \equiv_2 |\overline{Q}\setminus P|}{R' \subseteq \overline{Q}\setminus P}}\mathfrak{s}_Q(R' \sqcup P)\sum_{\mathcal{P} \in \pi_2(\overline{Q}\setminus (R' \sqcup P))}\sgn(\mathcal{P})\Lambda(\mathcal{P})(-1)^{S(P, R' \sqcup P)+\frac{|P|(|P|+1)}{2}}X_{R'}\\
\overset{Lem.~\ref{lemmacrucial},~\ref{lemmaFPF})}&{=}\frac{\mu}{2^{\left\lfloor \frac{|\overline{Q}\setminus P|}{2}\right\rfloor}}\sum_{\underset{|R'| \equiv_2 |\overline{Q}\setminus P|}{R' \subseteq \overline{Q}\setminus P}}(-1)^{S(P, \overline{Q})}\mathfrak{s}_{Q\setminus P}(R')\sum_{\mathcal{P} \in \pi_2(\overline{Q}\setminus (R' \sqcup P))}\sgn(\mathcal{P})\Lambda(\mathcal{P})(-1)^{\frac{|P|(|P|+1)}{2}}X_{R'}\\
&=(-1)^{S(P, \overline{Q})+\frac{|P|(|P|+1)}{2}}\frac{\mu}{2^{\left\lfloor \frac{|\overline{Q}\setminus P|}{2}\right\rfloor}}\sum_{\underset{|R'| \equiv_2 |\overline{Q}\setminus P|}{R' \subseteq \overline{Q}\setminus P}}\mathfrak{s}_{Q\setminus P}(R')\sum_{\mathcal{P} \in \pi_2(\overline{Q}\setminus (R' \sqcup P))}\sgn(\mathcal{P})\Lambda(\mathcal{P})X_{R'}\\
&=(-1)^{S(P, \overline{Q})+\frac{|P|(|P|+1)}{2}}t_{1,Q\setminus P}\\
&=(-1)^{S(P, Q)+|P|+\frac{|P|(|P|+1)}{2}}t_{1,Q\setminus P}.
\end{align*}
This proves that \eqref{B3cl} holds. To verify that $t_{1,Q}$ satisfy \eqref{B4cl} we can proceed by induction on $|Q|$. If $Q=\emptyset$, then we have
\[t_{1,\emptyset}G=\mu GG=\mu\alpha=G\mu G=(-1)^{0}Gt_{1,\emptyset},\]
while if $Q=\lbrace i \rbrace$ we see that
\[t_{1,\lbrace i \rbrace}G=\frac{\mu}{2}(-\gamma_i+2GX_i)G=\frac{\mu}{2}(\gamma_iG-2\alpha X_i)=-\frac{\mu}{2}G(-\gamma_i+2GX_i)=(-1)^{|\lbrace i \rbrace|}Gt_{1,\lbrace i \rbrace}.\]
Now suppose \eqref{B4cl} holds true for any $t_{1,Q}$ defined in \eqref{t1Q} for a $Q \subseteq \lbrace 1, \ldots, n \rbrace$ with cardinality $|Q|=m$ for a fixed $1 \leq m \leq n$. Consider $Q'=\lbrace i_1 < i_2 < \ldots <i_m < i_{m+1} \rbrace\subseteq \lbrace 1, \ldots, n \rbrace$ with cardinality $m+1$ and split it as $Q'=Q \sqcup \lbrace i_{m+1} \rbrace$, where $Q=\lbrace i_1 < i_2 < \ldots <i_m \rbrace$. Remember that $t_{1,Q'}$ satisfies \eqref{B5cl} (see Prop.~\ref{propt1Q}), therefore
\[t_{1,Q'}=\frac{1}{2} \left(t_{1,Q}X_{i_{m+1}}+(-1)^{|Q'|}X_{i_{m+1}}t_{1,Q}\right)\]
Then
\begin{align*}
&t_{1,Q'}G +(-1)^{|Q'|+1}Gt_{1,Q'}\\
&=\frac{1}{2} \left[t_{1,Q}X_{i_{m+1}}G+(-1)^{|Q'|}X_{i_{m+1}}t_{1,Q}G+(-1)^{|Q'|+1}G t_{1,Q}X_{i_{m+1}}-GX_{i_{m+1}}t_{1,Q}\right]\\
&=\frac{1}{2} \left[-t_{1,Q}GX_{i_{m+1}}+(-1)^{|Q'|}X_{i_{m+1}}t_{1,Q}G+(-1)^{|Q'|+1}G t_{1,Q}X_{i_{m+1}}+X_{i_{m+1}}Gt_{1,Q}\right]\\
&=\frac{1}{2} \left[X_{i_{m+1}}\left(Gt_{1,Q}+(-1)^{|Q'|}t_{1,Q}G\right)-\left( t_{1,Q}G+(-1)^{|Q'|}G t_{1,Q} \right)X_{i_{m+1}}\right].
\end{align*}
The induction hypothesis reads $t_{1,Q}G+(-1)^{|Q|+1}G t_{1,Q}=0$, so we can conclude that also $t_{1,Q'}G +(-1)^{|Q'|+1}Gt_{1,Q'}=0$ and we are done.
\end{proof}

\subsection{The main results}

\begin{theorem}\label{theo3}
Let $H=E(n)$ and let $A=Cl(\alpha,\beta_i, \gamma_i, \lambda_{ij})$ be a Clifford algebra endowed with the canonical $H$-coaction defined by \eqref{canonical1}-\eqref{canonicalGXi}. The cowreath $(A\otimes H^{op},H,\psi )$ is rt-separable if, and only if,
\begin{enumerate}
\item $A=Cl(0,0,0,0)$ \\
or
\item $A=Cl\left(\alpha,\frac{\gamma_i^2}{4\alpha}, \gamma_i, \frac{\gamma_i\gamma_j}{2 \alpha}\right)$ with $\alpha \neq 0$.  
\end{enumerate}
Let $\overline{Q}=Q \sqcup \lbrace 0 \rbrace$. The Casimir element is given by
\begin{equation}
B^A(g^jx_P \otimes g^kx_Q)=B^A(1 \otimes S(g^jx_P)g^kx_Q) \quad \text{and} \quad B^{A}\left(
1_{H}\otimes 1_{H}\right) =1_{A},  \label{eq:1}
\end{equation}%
\begin{equation}\label{eq:2}
B^A(1 \otimes x_Q)=\sum_{\underset{|R| \equiv_2 |Q|}{R \subseteq \overline{Q}}}(-1)^{S(Q \setminus R, Q)}\eta_{Q \setminus R} X_R,
\end{equation}
\begin{equation}\label{eq:3}
B^A(1 \otimes gx_Q)=\frac{\mu}{2^{\left\lfloor \frac{|\overline{Q}|}{2}\right\rfloor}}\sum_{\underset{|R| \equiv_2 |\overline{Q}|}{R \subseteq \overline{Q}}}(-1)^{S(R,\overline{Q})+\frac{|\overline{Q}|-|R|}{2}}2^{\left\lfloor \frac{|R|}{2}\right\rfloor}\sum_{\mathcal{P} \in \pi_2(\overline{Q}\setminus R)}\sgn(\mathcal{P})\Lambda(\mathcal{P})X_R,  
\end{equation}
for $j,k=0,1$ and every $P,Q \subseteq \lbrace 1, \ldots, n \rbrace$, where $\mu \in \Bbbk$ and $\eta_S \in \Bbbk$ for every $S \subseteq \lbrace 1, \ldots, n \rbrace$. If $\alpha \neq 0$, then $\eta_T =\sum_{i \in T}(-1)^{S(\lbrace i \rbrace, T)+1}\eta_{T \setminus  \lbrace i \rbrace}\frac{\gamma_i}{2\alpha}$ for every $T \subseteq Q$, with $|T|$ odd and every $Q \subseteq \lbrace 1, \ldots, n \rbrace$. 
\end{theorem}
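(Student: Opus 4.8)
The plan is to assemble Theorem~\ref{theo2} with the four preceding propositions, since the heavy computational work has already been dispatched there; what remains is essentially bookkeeping. By Theorem~\ref{theo2}, the cowreath $(A\otimes E(n)^{op},E(n),\psi)$ is rt-separable if and only if there is a linear map $B^A:H\otimes H\rightarrow A$ satisfying \eqref{B0cl}--\eqref{B5cl}. The first observation is that such a $B^A$ is entirely encoded by the elements $t_{k,Q}=B^A(1\otimes g^kx_Q)$: condition \eqref{B1cl} expresses every value $B^A(g^jx_P\otimes g^kx_Q)$ as $B^A(1\otimes S(g^jx_P)g^kx_Q)$, which is a linear combination of the $t_{k',Q'}$ once $S(g^jx_P)g^kx_Q$ is expanded in the basis of $E(n)$. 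Conversely, prescribing the $t_{k,Q}$ and declaring \eqref{B1cl} defines $B^A$ unambiguously, while \eqref{B0cl} is just the normalization $t_{0,\emptyset}=1$. Hence rt-separability reduces to finding, for each $k\in\{0,1\}$, a family $\{t_{k,Q}\}_{Q\subseteq\{1,\dots,n\}}$ satisfying \eqref{B2cl}--\eqref{B5cl}.

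First I would treat the case $k=0$. By Proposition~\ref{propt0Q}, conditions \eqref{B2cl} and \eqref{B3cl} hold for all $Q$ exactly when each $t_{0,Q}$ has the form \eqref{t0Q}, i.e. \eqref{eq:2}, which fixes their shape up to the scalars $\eta_S$ (with $\eta_\emptyset=1$). Then Proposition~\ref{propt0Q2} shows that imposing the remaining conditions \eqref{B4cl} and \eqref{B5cl} on these elements is equivalent to $A$ being one of the two listed algebras: either $A=Cl(0,0,0,0)$, in which case the $\eta_S$ are free for $S\neq\emptyset$, or $A=Cl(\alpha,\frac{\gamma_i^2}{4\alpha},\gamma_i,\frac{\gamma_i\gamma_j}{2\alpha})$ with $\alpha\neq0$, in which case the odd-cardinality $\eta_T$ are pinned down by the recurrence $\eta_T=\sum_{i\in T}(-1)^{S(\{i\},T)+1}\eta_{T\setminus\{i\}}\frac{\gamma_i}{2\alpha}$. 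This is the step producing the dichotomy in the statement, and it is the \emph{only} place where a genuine constraint on $\alpha,\beta_i,\gamma_i,\lambda_{ij}$ arises.

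Next I would treat the case $k=1$. By Proposition~\ref{propt1Q}, condition \eqref{B5cl} holds for all $Q$ exactly when each $t_{1,Q}$ has the form \eqref{t1Q}, i.e. \eqref{eq:3}, for some scalar $\mu$; and by Proposition~\ref{propt1Q2} such a family automatically satisfies \eqref{B2cl}, \eqref{B3cl} and \eqref{B4cl}. The crucial point to record is that both propositions hold for an \emph{arbitrary} Clifford algebra: a family $\{t_{1,Q}\}$ of the form \eqref{eq:3} always exists and fulfills all of \eqref{B2cl}--\eqref{B5cl} with no restriction on $\alpha,\beta_i,\gamma_i,\lambda_{ij}$. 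Thus the $k=1$ part imposes no condition on $A$, and the two families $\{t_{0,Q}\}$ and $\{t_{1,Q}\}$ may be chosen independently.

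Putting the pieces together yields both implications. For the ``only if'' direction, rt-separability forces a valid $t_{0,Q}$ family, whence Propositions~\ref{propt0Q} and \ref{propt0Q2} place $A$ in case (1) or (2). For the ``if'' direction, assuming $A$ is of one of these types, I would define $B^A$ via \eqref{eq:1}--\eqref{eq:3}, with the $\eta_T$ obeying the stated recurrence when $\alpha\neq0$; Propositions~\ref{propt0Q}--\ref{propt1Q2} then guarantee that \eqref{B0cl}--\eqref{B5cl} all hold, so Theorem~\ref{theo2} gives rt-separability and exhibits the Casimir element in the claimed form. The main obstacle is not any single computation, all of which are handled by the earlier propositions, but rather verifying that the $k=0$ and $k=1$ conditions are genuinely decoupled, so that the whole constraint on $A$ is the one extracted in Proposition~\ref{propt0Q2}; once this independence is made explicit, the synthesis is immediate.
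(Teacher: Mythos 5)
Your proposal is correct and follows essentially the same route as the paper's own proof: invoke Theorem~\ref{theo2} to reduce rt-separability to conditions \eqref{B0cl}--\eqref{B5cl}, use Propositions~\ref{propt0Q} and \ref{propt0Q2} to pin down the $t_{0,Q}$ and extract the dichotomy on the defining scalars, and use Propositions~\ref{propt1Q} and \ref{propt1Q2} to show the $t_{1,Q}$ exist and satisfy everything with no further constraint. Your explicit remark that the $k=0$ and $k=1$ families are decoupled is implicit in the paper but is a fair point to make.
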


\begin{proof}
By Theorem~\ref{theo2} we have that the cowreath $(A\otimes E(n)^{op},E(n),\psi )$ is separable via a
Casimir element%
\begin{equation*}
B:E(n) \otimes E(n) \rightarrow A\otimes E(n)^{op}
\end{equation*}%
of the form
\begin{equation*}
h\otimes h^{\prime }\rightarrow B^{A}\left( h\otimes h^{\prime }\right)
\otimes 1_{H}
\end{equation*}%
if, and only if, the equalities \eqref{B0cl}-\eqref{B5cl} hold for $j,k=0,1$ and for every $P,Q, \lbrace i \rbrace \subseteq \lbrace 1, \ldots, n\rbrace$. If we suppose they do, then \eqref{B0cl} and \eqref{B1cl} coincide with \eqref{eq:1}, while \eqref{B2cl} and \eqref{B3cl} imply \eqref{eq:2}, by Proposition~\ref{propt0Q}. Equality \eqref{eq:3} is implied by \eqref{B5cl}, thanks to Proposition~\ref{propt1Q}, and the conditions on the defining scalars and the $\eta_T$'s follow from \eqref{B4cl} and \eqref{B5cl} as proved in Proposition~\ref{propt0Q2}.

Conversely suppose that \eqref{eq:1}-\eqref{eq:3} hold true and that we are either in case \emph{(1)} or \emph{(2)}. Then \eqref{eq:1} is the same as \eqref{B0cl} and \eqref{B1cl}, \eqref{eq:3} implies \eqref{B2cl}-\eqref{B5cl} for $k=1$, by Proposition~\ref{propt1Q} and Proposition~\ref{propt1Q2}. \eqref{eq:2} implies \eqref{B2cl} and \eqref{B3cl} for $k=0$, thanks to Proposition~\ref{propt0Q}, and since either \emph{(1)} or \emph{(2)} is verified we also have that \eqref{B4cl} and \eqref{B5cl} hold for $k=0$, by Proposition~\ref{propt0Q2}. We can conclude, by Theorem~\ref{theo2}, that the cowreath $(A\otimes H^{op},H,\psi )$ is rt-separable.
\end{proof}

\begin{theorem}\label{theo4}
Let $H=E(n)$ and let $A=Cl(\alpha,\beta_i, \gamma_i, \lambda_{ij})$ be a Clifford algebra endowed with the canonical $H$-coaction defined by \eqref{canonical1}-\eqref{canonicalGXi}. The cowreath $(A\otimes H^{op},H,\psi )$ is rth-separable if, and only if, $A=Cl\left(\alpha,\frac{\gamma_i^2}{4\alpha}, \gamma_i, \frac{\gamma_i\gamma_j}{2 \alpha}\right)$ with $\alpha \neq 0$. The Casimir element is given by 
\begin{equation}
B^A(g^jx_P \otimes g^kx_Q)=B^A(1 \otimes S(g^jx_P)g^kx_Q),  \label{eq:4}
\end{equation}%
\begin{equation}\label{eq:5}
\begin{split}
B^A(1 \otimes x_Q)&=\sum_{\underset{|R| \equiv_2 |Q|}{R \subseteq Q}}(-1)^{S(Q \setminus R, Q)+\frac{|Q|-|R|}{2}}\frac{\prod_{j \in Q\setminus R}\gamma_j}{2^{|Q|-|R|}\alpha^{\frac{|Q|-|R|}{2}}} X_R+\\
&+\sum_{\underset{|R| \not\equiv_2 |Q|}{R \subseteq Q}}(-1)^{S(Q \setminus R, Q)+\frac{|Q|-|R|+1}{2}}\frac{\prod_{j \in Q\setminus R}\gamma_j}{2^{|Q|-|R|} \alpha^{\frac{|Q|-|R|+1}{2}}} GX_R,
\end{split}
\end{equation}
\begin{equation}\label{eq:6}
\begin{split}
B^A(1 \otimes gx_Q)&=(\mu G) B^A(1 \otimes x_Q)
\end{split}
\end{equation}
for $j,k=0,1$ and every $P,Q \subseteq \lbrace 1, \ldots, n \rbrace$,
with $\mu \in \Bbbk$ and $\mu^2\alpha=1$.
\end{theorem}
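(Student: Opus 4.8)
The plan is to build directly on Theorem~\ref{theo3}, treating rth-separability as rt-separability plus the two conditions in \eqref{B6cl}. By Theorem~\ref{theo2} (together with Definition~\ref{rtsep}) these two conditions, imposed on top of \eqref{B0cl}--\eqref{B5cl}, are exactly what is needed, so I would start from the two cases produced by Theorem~\ref{theo3}: either $A=Cl(0,0,0,0)$ or $A=Cl(\alpha,\tfrac{\gamma_i^2}{4\alpha},\gamma_i,\tfrac{\gamma_i\gamma_j}{2\alpha})$ with $\alpha\neq0$, the Casimir element already being of the shape \eqref{eq:2}--\eqref{eq:3} with free parameters $\mu$ and the even-cardinality $\eta_S$. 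It then remains to read off what \eqref{B6cl} imposes.

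First I would dispose of the scalar condition. By Proposition~\ref{propt1Q} one has $t_{1,\emptyset}=\mu G$, so $(t_{1,\emptyset})^2=\mu^2G^2=\mu^2\alpha$; hence $(t_{1,\emptyset})^2=1$ forces $\alpha\neq0$ together with a choice of $\mu$ satisfying $\mu^2\alpha=1$. This immediately excludes the case $A=Cl(0,0,0,0)$ and shows that in the surviving case $\mu G$ is its own inverse. For the second equation of \eqref{B6cl} I would use \eqref{B4cl} in the form $t_{0,P}G=(-1)^{|P|}Gt_{0,P}$ to rewrite $t_{1,P}=(-1)^{|P|}t_{0,P}t_{1,\emptyset}=\mu G\,t_{0,P}$; thus \eqref{B6cl} is equivalent to the single family of identities $t_{1,Q}=\mu G\,t_{0,Q}$, i.e. to \eqref{eq:6}. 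Because $\mu G$ is invertible, this identity also pins down the even-cardinality coefficients left undetermined by Theorem~\ref{theo3}, via $t_{0,Q}=\mu G\,t_{1,Q}$, which is how one recovers the fully explicit form \eqref{eq:5} (the odd-cardinality $\eta_S$ being already fixed through \eqref{t01}).

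The heart of the argument is therefore the identity $t_{1,Q}=\mu G\,t_{0,Q}$, and this is where the special shape of the surviving case is decisive. Setting $\gamma_0:=2\alpha$, all the defining relations collapse to $\lambda_{ij}=\tfrac{\gamma_i\gamma_j}{2\alpha}$ uniformly for $i,j\in\{0,1,\dots,n\}$ (with $\lambda_{ii}=2\beta_i$), so that the weight $\Lambda(\mathcal{P})=(2\alpha)^{-m}\prod_{i\in\overline{Q}\setminus R}\gamma_i$ no longer depends on the perfect matching $\mathcal{P}\in\pi_2(\overline{Q}\setminus R)$. Proposition~\ref{prop:matching}(iii) then collapses $\sum_{\mathcal{P}}\sgn(\mathcal{P})\Lambda(\mathcal{P})$ to the single scalar $\Lambda$, turning \eqref{eq:3} into an explicit single sum over $R$; I would match this term by term against $\mu G\,t_{0,Q}$ computed from \eqref{eq:5}, expanding $G\cdot X_R$ and $G\cdot GX_R=\alpha X_R$ and reindexing between the two families of basis elements. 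A conceptually cleaner route, which I would at least keep as a sanity check, is to pass to the completed generators $Y_i:=X_i-\tfrac{\gamma_i}{2\alpha}G$: in the surviving case they satisfy $Y_i^2=0$, $Y_iY_j=-Y_jY_i$ and $GY_i=-Y_iG$, so that $A\cong Cl(\alpha,0,0,0)$ in the $Y_i$, and one checks $t_{0,Q}=Y_Q$ and $t_{1,Q}=\mu G\,Y_Q$, making $t_{1,Q}=\mu G\,t_{0,Q}$ tautological.

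The main obstacle is the sign and normalization bookkeeping in the explicit comparison: reconciling $S(R,\overline{Q})$ with $S(Q\setminus R,Q)$, the floor exponents $\lfloor|R|/2\rfloor$ and $\lfloor|\overline{Q}|/2\rfloor$, and the correct powers of $2$ and $\alpha$ according to whether $0\in R$, all of which I would handle with the identities of Lemma~\ref{lemmacrucial}. The conceptual point that makes the whole theorem work --- and that singles out case (2) among all Clifford algebras --- is precisely the matching-independence of $\Lambda(\mathcal{P})$ and the ensuing application of Proposition~\ref{prop:matching}(iii) (equivalently, the completing-the-square isomorphism $A\cong Cl(\alpha,0,0,0)$). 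For the converse direction I would simply take the element defined by \eqref{eq:4}--\eqref{eq:6} with $\mu^2\alpha=1$: it is of the form covered by Theorem~\ref{theo3} in case (2), hence rt-separable, and \eqref{B6cl} holds by the computation just described, so the cowreath is rth-separable.
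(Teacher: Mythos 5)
Your proposal is correct and follows essentially the same route as the paper: starting from Theorem~\ref{theo3}, reading $(t_{1,\emptyset})^2=\mu^2\alpha=1$ to exclude the degenerate case, converting the second condition of \eqref{B6cl} via \eqref{B4cl} into $t_{0,Q}=\mu G\,t_{1,Q}$, and using the matching-independence of $\Lambda(\mathcal{P})$ together with Proposition~\ref{prop:matching}\,\ref{prop:sgn}) to collapse \eqref{eq:3} into the explicit formula \eqref{eq:5}. The completing-the-square change of generators $Y_i=X_i-\tfrac{\gamma_i}{2\alpha}G$ is a nice supplementary observation not present in the paper, but it is offered only as a sanity check, and the main argument matches the paper's proof.
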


\begin{proof}
If the cowreath is rth-separable then it is rt-separable and \eqref{B6cl} holds. The first equality of  \eqref{B6cl} reads $\mu^2 \alpha=(\mu G)^2=1$ which implies that $\mu, \alpha \neq 0$. Then, by \autoref{theo3}, we get that $A=Cl\left(\alpha,\frac{\gamma_i^2}{4\alpha}, \gamma_i, \frac{\gamma_i\gamma_j}{2 \alpha}\right)$ and that \eqref{eq:4} holds, together with
\begin{align*}
t_{0,Q} \overset{\eqref{B6cl}}&{=}(-1)^{|Q|}t_{1,Q}t_{1,\emptyset}=(-1)^{|Q|}\mu t_{1,Q}G\overset{\eqref{B4cl}}{=}\mu Gt_{1,Q}\\
\overset{\eqref{eq:3}}&{=}\frac{\mu^2}{2^{\left\lfloor \frac{|\overline{Q}|}{2}\right\rfloor}}\sum_{\underset{|R| \equiv_2 |\overline{Q}|}{R \subseteq \overline{Q}}}(-1)^{S(R,\overline{Q})+\frac{|\overline{Q}|-|R|}{2}}2^{\left\lfloor \frac{|R|}{2}\right\rfloor}\sum_{\mathcal{P} \in \pi_2(\overline{Q}\setminus R)}\sgn(\mathcal{P})\Lambda(\mathcal{P})GX_R.
\end{align*}
Notice that since $\lambda_{ij}=\frac{\gamma_i\gamma_j}{2 \alpha}$ for every $i, j \in \lbrace 1, \ldots, n \rbrace$ (with the convention that $\lambda_{ii}=2\beta_i$), then $\Lambda(\mathcal{P})= \frac{1}{(2\alpha)^{\frac{|U|}{2}}}\prod_{j \in U} \gamma_j$ for every $\mathcal{P} \in \pi_2(U)$ and every $U$ with even cardinality. Thanks to Proposition~\ref{prop:matching}, $\ref{prop:sgn})$, we can conclude that
\begin{align*}
t_{0,Q} &=\frac{\mu^2}{2^{\left\lfloor \frac{|\overline{Q}|}{2}\right\rfloor}}\sum_{\underset{|R| \equiv_2 |\overline{Q}|}{R \subseteq \overline{Q}}}(-1)^{S(R,\overline{Q})+\frac{|\overline{Q}|-|R|}{2}}2^{\left\lfloor \frac{|R|}{2}\right\rfloor}\sum_{\mathcal{P} \in \pi_2(\overline{Q}\setminus R)}\sgn(\mathcal{P})\Lambda(\mathcal{P})GX_R\\
\overset{Prop.~\ref{prop:matching}, \ \ref{prop:sgn})}&{=}\frac{\mu^2}{2^{\left\lfloor \frac{|\overline{Q}|}{2}\right\rfloor}}\sum_{\underset{|R| \equiv_2 |\overline{Q}|}{R \subseteq \overline{Q}}}(-1)^{S(R,\overline{Q})+\frac{|\overline{Q}|-|R|}{2}}2^{\left\lfloor \frac{|R|}{2}\right\rfloor}\frac{\prod_{j \in \overline{Q}\setminus R} \gamma_j}{(2\alpha)^{\frac{|\overline{Q}|-|R|}{2}}}GX_R\\
&=\frac{1}{2^{\left\lfloor \frac{|Q|+1}{2}\right\rfloor}}\sum_{\underset{|R| \equiv_2 |Q|}{R \subseteq Q}}(-1)^{S(R,Q)+\frac{|Q|-|R|}{2}}2^{\left\lfloor \frac{|R|+1}{2}\right\rfloor}\frac{\prod_{j \in Q \setminus R} \gamma_j}{(2\alpha)^{\frac{|Q|-|R|}{2}}}X_R\\
&+\frac{\mu^2}{2^{\left\lfloor \frac{|Q|+1}{2}\right\rfloor}}\sum_{\underset{|R| \not\equiv_2 |Q|}{R \subseteq Q}}(-1)^{S(R,Q)+|R|+\frac{|Q|-|R|+1}{2}}2^{\left\lfloor \frac{|R|}{2}\right\rfloor}\frac{\prod_{j \in Q \setminus R} \gamma_j}{(2\alpha)^{\frac{|Q|-|R|-1}{2}}}GX_R\\
\overset{Lem.~\ref{lemmacrucial}, \ \ref{corcompl})}&{=}\sum_{\underset{|R| \equiv_2 |Q|}{R \subseteq Q}}(-1)^{S(Q \setminus R,Q)+\frac{|Q|-|R|}{2}}\frac{\prod_{j \in Q \setminus R} \gamma_j}{2^{|Q|-|R|}\alpha^{\frac{|Q|-|R|}{2}}}X_R+\\
&+\sum_{\underset{|R| \not\equiv_2 |Q|}{R \subseteq Q}}(-1)^{S(Q \setminus R,Q)+\frac{|Q|-|R|+1}{2}}\frac{\prod_{j \in Q \setminus R} \gamma_j}{2^{|Q|-|R|-1}\alpha^{\frac{|Q|-|R|+1}{2}}}GX_R
\end{align*}
and thus we find \eqref{eq:5}. Equality \eqref{eq:6} follows from \eqref{B6cl} and \eqref{B4cl}. Conversely, one can check that if equalities \eqref{eq:4}-\eqref{eq:6} are satisfied then \eqref{B0cl}-\eqref{B6cl} hold true.
\end{proof}

\end{document}